\newtheorem{theorem}{Theorem}[section]
\newtheorem{lemma}[theorem]{Lemma}
\newtheorem{proposition}[theorem]{Proposition}
\newtheorem{corollary}[theorem]{Corollary}
\theoremstyle{definition}
\newtheorem{definition}[theorem]{Definition}
\newtheorem{example}[theorem]{Example}
\newtheorem{examples}[theorem]{Examples}
\newtheorem{definitions and remarks}[theorem]{Definitions and Remarks}
\theoremstyle{remark}
\newtheorem{remark}[theorem]{Remark}
\newtheorem{remarks}[theorem]{Remarks}
\numberwithin{equation}{section}
\newcommand{\pinv}{\kappa\text{-}inv}
\newcommand{\inv}{\mathrm{inv}}
\newcommand{\Sing}{\mathrm{Sing}\,}
\newcommand{\Supp}{\mathrm{Supp}\,}
\newcommand{\supp}{\mathrm{supp}}
\newcommand{\cosupp}{\mathrm{cosupp}\,}
\newcommand{\mon}{\mathrm{mon}}
\newcommand{\ord}{\mathrm{ord}}
\newcommand{\length}{\mathrm{length}\,}
\newcommand{\lcm}{\mathrm{lcm}}
\newcommand{\al}{{\alpha}}
\newcommand{\ka}{{\kappa}}
\newcommand{\la}{{\lambda}}
\newcommand{\La}{{\Lambda}}
\newcommand{\Om}{{\Omega}}
\newcommand{\s}{{\sigma}}
\newcommand{\io}{{\iota}}
\newcommand{\IN}{{\mathbb N}}
\newcommand{\IK}{{\mathbb K}}
\newcommand{\cB}{{\mathcal B}}
\newcommand{\cE}{{\mathcal E}}
\newcommand{\cG}{{\mathcal G}}
\newcommand{\cH}{{\mathcal H}}
\newcommand{\cI}{{\mathcal I}}
\newcommand{\cJ}{{\mathcal J}}
\newcommand{\cK}{{\mathcal K}}
\newcommand{\cM}{{\mathcal M}}
\newcommand{\cN}{{\mathcal N}}
\newcommand{\cO}{{\mathcal O}}
\newcommand{\cR}{{\mathcal R}}
\newcommand{\cS}{{\mathcal S}}
\newcommand{\cV}{{\mathcal V}}
\newcommand{\fm}{{\mathfrak m}}
\newcommand{\fn}{{\mathfrak n}}
\newcommand{\tD}{{\widetilde D}}
\newcommand{\wA}{{\widehat A}}
\newcommand{\ucB}{\underline{\cB}}
\newcommand{\ucG}{\underline{\cG}}
\newcommand{\ucH}{\underline{\cH}}
\newcommand{\ucI}{\underline{\cI}}
\newcommand{\ucJ}{\underline{\cJ}}
\newcommand{\llbracket}{{[\![}}
\newcommand{\rrbracket}{{]\!]}}
\begin{document}
\title[Desingularization preserving stable simple normal crossings]{Desingularization preserving stable simple normal crossings}

\author[E.~Bierstone]{Edward Bierstone}
\author[F.~Vera Pacheco]{Franklin Vera Pacheco}
\address{University of Toronto, Department of Mathematics, 40 St. George Street,
Toronto, ON, Canada M5S 2E4, and the Fields Institute, 222 College Street, Toronto, ON, Canada M5T 3J1}
\email[E.~Bierstone]{bierston@math.toronto.edu}
\email[F.~Vera Pacheco]{franklin.vp@gmail.com}
\thanks{Research supported in part by NSERC grants OGP0009070 and
MRS342058.}

\subjclass{Primary 14E15, 14J17, 32S45; Secondary 14B05, 14C20, 32S05, 32S10}

\keywords{resolution of singularities, simple normal crossings, desingularization invariant,
Hilbert-Samuel function}

\begin{abstract}
The subject is partial resolution of singularities. Given an algebraic variety $X$ (not necessarily equidimensional) 
in characteristic zero (or, more generally, a pair $(X,D)$, where $D$ is a divisor on $X$), we construct a functorial
desingularization of all but \emph{stable simple normal crossings (stable-snc)} singularities, 
by smooth blowings-up that preserve
such singularities. A variety has \emph{stable simple normal crossings} at a point if, locally, its irreducible components
are smooth and tranverse in some smooth embedding variety. We also show that our main assertion is false for more
general simple normal crossings singularities.
\end{abstract}
\date{\today}
\maketitle
\setcounter{tocdepth}{1}
\tableofcontents

\section{Introduction}\label{sec:intro}
The subject of this article is partial resolution of singularities. Let $X$ denote a
(reduced) algebraic variety $X$ over a field of characteristic zero and let $D$ denote a $\mathbb{Q}$-Weil divisor
on $X$. Our main result (see Theorem \ref{thm:mainpairs}) asserts that we
can resolve all but \emph{stable simple normal crossings} singularities of $(X,D)$ by a finite sequence of
blowings-up, each of which is an isomorphism over the stable simple normal crossings points of its target.
See Definitions \ref{def:stablesnc}, \ref{def:stablesncpairs}, and Lemma \ref{lem:stablesnc}. The theorem
is functorial (Remarks \ref{rem:main}) and is obtained by an algorithm. Theorem \ref{thm:mainpairs} is false for more
general normal crossings singularities; see Example \ref{ex:counterex} (of course,
a weaker desingularization
result may hold in this case). We do not assume that $X$ is equidimensional;
in particular, we do not
define simple normal crossings singularities in a way that they are necessarily hypersurface singularities, as in \cite{BV}.
Our main theorem generalizes \cite{BV}; simple normal crossings hypersurface singularities are necessarily
stable.

For background and motivation of the problem,
see \cite{BMmin}, \cite{BV} and \cite{Kolog}. Our proof follows the philosophy of \cite{BMmin} that the desingularization invariant of \cite{BMinv} and \cite{BMfunct}
can be used together with natural geometric
information to compute local normal forms of singularities.

\begin{definition}\label{def:stablesnc} 
A (reduced) algebraic variety $X$ has a \emph{stable simple normal crossings} (\emph{stable-snc}) 
singularity at a point $a$ (or $X$ is \emph{stable-snc at $a$}) if
the irreducible components $X^{(i)}$ of $X$ are smooth at $a$, and are transverse at $a$ in some smooth embedding
variety $Z$ of a neighbourhood of $a$ in $X$ (i.e., the sum of the codimensions in $Z$ of the
tangent spaces of the $X^{(i)}$ at $a$ equals the codimension of the intersection of the
tangent spaces).
\end{definition}

Note that $Z$ in the definition is necessarily a minimal local embedding variety. We say
 that $X$ has a
\emph{simple normal crossings} (\emph{snc}) singularity at $a$ if there is a smooth local 
embedding variety $Z$ at $a$ with a system
of regular coordinates in which each $X^{(i)}$ is a coordinate subspace. (This is
a more general notion than ``$X$ is locally isomorphic to a simple normal crossings divisor'',
often used as the definition.)

\begin{lemma}\label{lem:stablesnc}. Let $X$ denote an algebraic variety, and let $X^{(i)}$ denote the irreducible
components of $X$. Let $a \in X$. Assume that each $X^{(i)}$ is smooth at $a$. Then the following conditions
are equivalent:
\begin{enumerate}
\item $X$ is stable-snc at $a$.

\item If $Z$ is a smooth local embedding variety of $X$ at $a$, then $Z$ admits a system of regular coordinates
$(x_1,\ldots, x_p, z_1,\ldots,z_r,w_1,\ldots,w_s)$ at $a$, with respect to which each $X^{(i)}=(\{x_k = 0\}_{k\in I_i}, z_1=\ldots=z_r=0)$, 
for some partition $\cup_{i=1}^{m} I_i$ of $\{1,\ldots,p\}$.

\item $X$ is snc at $a$ and there is a smooth local embedding variety in which any two components $X^{(i)}$
are transverse at $a$.

\item The intersection of the $X^{(i)}$ is smooth (as a scheme) at $a$, and $X$ admits a smooth local embedding
variety $Z$ at $a$ in which the sum
of the codimensions of the $X^{(i)}$ equals the codimension of their
intersection. (See also \eqref{eq:codimstablesnc}.)

\item $X$ admits a smooth local embedding variety at $a$ in which the $X^{(i)}$ are smooth and in general position.
\end{enumerate}
\end{lemma}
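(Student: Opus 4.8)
The plan is to translate all five conditions into linear algebra on the tangent spaces $T_i := T_a X^{(i)} \subseteq T := T_a Z$ and their conormals $N_i^\ast := (T/T_i)^\ast \subseteq T^\ast$, and then to extract a single normal form from which every equivalence reads off. First I would record the elementary fact that for linear subspaces one always has $\codim_T \bigcap_i T_i \le \sum_i \codim_T T_i$, with equality if and only if the conormals $N_i^\ast$ are linearly independent in $T^\ast$ (equivalently, the map $T \to \bigoplus_i T/T_i$ is surjective); this is exactly the meaning of being in general position, so (1) and (5) coincide once one notes that smoothness of each $X^{(i)}$ gives $\codim X^{(i)} = \codim T_i$. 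I would also settle here the ``some $Z$ / any $Z$'' bookkeeping: if one enlarges an embedding by $t$ directions vanishing on $X$, then each $\codim T_i$ and $\codim \bigcap_i T_i$ grows by $t$, so the transversality equality is destroyed for $m \ge 2$. Hence any embedding in which transversality holds is automatically minimal (the Note), while in a non-minimal embedding these excess directions are precisely the $z$-variables of (2).

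The heart of the argument is (1)$\Rightarrow$(2). Since each $X^{(i)}$ is smooth of codimension $c_i := \codim T_i$, I would choose local functions $f^{(i)}_1,\dots,f^{(i)}_{c_i}$ cutting out $X^{(i)}$ whose differentials at $a$ form a basis of $N_i^\ast$. General position says the $N_i^\ast$ are independent, so the full collection $\{d_a f^{(i)}_j\}$ is linearly independent in $T^\ast$; completing these differentials to a basis and invoking the inverse function theorem produces a regular coordinate system on $Z$ in which $X^{(i)} = \{f^{(i)}_1 = \cdots = f^{(i)}_{c_i} = 0\}$. Relabelling the $f^{(i)}_j$ as the $x_k$ (partitioned by component, so that the $I_i$ are automatically disjoint), the remaining coordinate functions fall into two groups: those whose coordinate directions lie in every $T_i$, hence in $\bigcap_i T_i$ (the free variables $w$, appearing in no component), and, when $Z$ is not minimal, those vanishing on $X$ (the variables $z$). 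This is the normal form of (2). The converse (2)$\Rightarrow$(1) is a direct computation in a minimal embedding, where $r=0$ forces the disjoint index sets to yield independent conormals. I expect this lifting step---passing from independence of the conormals to an honest simultaneous coordinate system, while correctly separating the $w$- and $z$-directions according to whether the embedding is minimal---to be the main obstacle.

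For (1)$\Leftrightarrow$(4) I would use that the scheme-theoretic intersection $\bigcap_i X^{(i)} = V\!\big(\sum_i I(X^{(i)})\big)$ has Zariski tangent space $\bigcap_i T_i$ at $a$; thus smoothness of this scheme means $\codim \bigcap_i X^{(i)} = \codim \bigcap_i T_i$, and since $\codim X^{(i)} = \codim T_i$, the codimension equality in (4) is literally the transversality of (1). That transversality forces the intersection to be smooth is immediate from the normal form, where it becomes the coordinate subspace $\{x_1 = \cdots = x_p = 0\}$; this supplies the remaining implication.

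Finally (1)$\Leftrightarrow$(3). In a minimal embedding the snc hypothesis makes each $N_i^\ast$ a coordinate subspace $\Span\{dx_k : k \in I_i\}$ of $T^\ast$ for one common coordinate system, so $N_i^\ast \cap N_j^\ast = \Span\{dx_k : k \in I_i \cap I_j\}$. Pairwise transversality is exactly $N_i^\ast \cap N_j^\ast = 0$, i.e.\ $I_i \cap I_j = \varnothing$ for all $i \ne j$; for coordinate subspaces this pairwise disjointness is the same as mutual independence of the whole family, which is the global transversality of (1). The reverse implication is immediate, since the normal form of (2) exhibits $X$ as snc with pairwise disjoint components. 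The only point requiring care is that snc and pairwise transversality be read in a common (minimal) embedding, which is justified exactly as in the first paragraph.
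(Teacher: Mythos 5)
The paper never actually proves this lemma --- it is stated right after Definition \ref{def:stablesnc} and used throughout, but no proof appears anywhere in the text --- so there is no proof of record to compare against, and your proposal must stand on its own. Your framework (conormals $N_i^\ast\subset T_a^\ast Z$, with $\codim\bigcap_i T_i\le\sum_i\codim T_i$ and equality precisely when the $N_i^\ast$ are in direct sum) is the natural one, and your treatment of $(1)\Leftrightarrow(5)$, of $(1)\Leftrightarrow(4)$, and of the coordinate computation in $(3)\Rightarrow(1)$ is essentially correct.

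The genuine gap is in $(1)\Rightarrow(2)$, and you name it yourself (``the main obstacle'') without resolving it. Condition (2) quantifies over \emph{every} smooth local embedding variety $Z$, but your construction is run as if $Z$ were minimal: when $Z$ is not minimal and $m\ge 2$, every conormal $N_i^\ast$ contains the $r$-dimensional conormal space of a minimal embedding $Z_{\min}\subset Z$ (since $X^{(i)}\subset Z_{\min}$ forces $I_{Z_{\min}}\subset I_{X^{(i)}}$), so the $N_i^\ast$ pairwise overlap, the collection $\{d_af^{(i)}_j\}$ is \emph{not} linearly independent, and the completion to a coordinate system fails; moreover the normal form your argument would output --- each $X^{(i)}$ cut out by its ``private'' equations alone --- is the $r=0$ case of (2), which cannot hold in a non-minimal embedding by your own bookkeeping. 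What is missing is the standard chain: (i) locally, any embedding $Z$ contains a minimal embedding $Z_{\min}\supseteq X$, cut out by $r$ functions $z_1,\ldots,z_r$ with independent differentials; (ii) transversality is intrinsic for minimal embeddings, because $T_aZ_{\min}=T_aX$ and each $T_aX^{(i)}\subseteq T_aX$ is independent of the embedding, so condition (1) --- which a priori holds in some unrelated embedding $Z_0$, itself minimal when $m\ge2$ --- transfers to $Z_{\min}$; (iii) run your coordinate construction inside $Z_{\min}$ and adjoin $z_1,\ldots,z_r$ to get coordinates on $Z$. The same intrinsic-ness point is needed, and not supplied, where you assert that snc and pairwise transversality can be ``read in a common (minimal) embedding \ldots justified exactly as in the first paragraph'': that paragraph shows pairwise transversality forces minimality, but the descent of the snc coordinate-subspace structure from an arbitrary embedding to a minimal one (via $T_aX=\{x_k=0:k\in\bigcap_i J_i\}$, in which each component remains a coordinate subspace) is a separate, easy, and omitted verification.
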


\begin{remark}\label{rem:stablesnc}
It follows from Lemma \ref{lem:stablesnc} that, if $X$ is stable-snc at $a$, then the conditions (3)--(5) and the 
transversality property
of Definition \ref{def:stablesnc} are satisfied in any minimal embedding variety of $X$ at $a$.
\end{remark}

\begin{definition}\label{def:stablesncpairs}
Let $X$ denote a (reduced) algebraic variety and let $X^{(i)}$ denote the irreducible
components of $X$. Let $D$ denote a $\mathbb{Q}$-Weil divisor on $X$, i.e. $D$ is
a finite linear combination of reduced, irreducible subvarieties of $X$, each of codimension
one in any $X^{(i)}$ that contains it. We say that $(X,D)$ has (or is) \emph{stable simple normal crossings} (\emph{stable-snc}) 
at a point $a$ if there is a local embedding at $X \hookrightarrow Z$ 
at $a$, where $Z$ is smooth and admits a regular system of coordinates 
$(x_1,\ldots, x_p, y_1,\ldots, y_q,z_1,\ldots,z_r,w_1,\ldots,w_s)$ at $a$
in which
\begin{enumerate}
\item each $X^{(i)}:=(\{x_k = 0\}_{k\in I_i}, z_1=\ldots=z_r=0)$, for some partition $\cup_{i=1}^{m} I_i$ of $\{1,\ldots,p\}$;
\item
$D=\sum_{j=1}^{k}\alpha_j(y_j=0)|_{X}$ (locally at $a$), for some $\alpha_{j}\in\mathbb{Q}$. 
\end{enumerate}
We also say that the pair $(X,D)$ is \emph{stable-snc} if it is stable-snc at every point.
\end{definition}

It follows that, if $(X,D)$ is stable-snc at $a$, then any smooth local embedding variety at $a$ admits a
regular coordinate system as in Definition \ref{def:stablesncpairs}.

Observe that in Definition \ref{def:stablesncpairs} we do not assume \emph{a priori} that $D$ arises from the intersection with $X$ of a divisor on $Z$, though of course this property is satisfied if
$(X,D)$ is stable-snc.

\begin{example}\label{ex:snc-stablesnc}
 Consider $X:=(x=y=0)\cup(y=z=0)\cup(x=z=0)\subset\mathbb{A}_{x,y,z}^{3}$. Then $X$ is snc at the origin but not stable-snc. On the other hand, $Y:=(x=y=0)\cup(y=z=0)$ is stable-snc.
\end{example}

\begin{example}\label{ex:multiplicities}
If $X=(xy=0)\subset\mathbb{A}^3$ and $D=a_1D_1+a_2D_2$, where $D_1=(x=z=0)$ and $D_2=(y=z=0)$, then the 
pair $(X,D)$ is stable-snc if and only if $a_1=a_2$.
\end{example}

\begin{definition}\label{not:seq} \emph{Transform} of a pair $(X,D)$. Consider a sequence of blowings-up
\begin{equation}\label{eq:bluppairs}
X = X_0 \stackrel{\s_1}{\longleftarrow} X_1 \longleftarrow \cdots
\stackrel{\s_{t}}{\longleftarrow} X_{t}\,,
\end{equation}
where each $\s_{j+1}$ has smooth centre $C_j \subset X_j$. Write
$\tD_0:=D$ and, for each $j = 0,1,\ldots$\,, set
$\tD_{j+1} :=$ the birational transform of $\tD_j$ plus the \emph{exceptional divisor}
$\s_{j+1}^{-1}(C_j)$ of $\s_{j+1}$.
\end{definition}

\begin{theorem}\label{thm:mainpairs}
Let $X$ denote a (reduced) algebraic variety in characteristic zero and let $D$ denote a $\mathbb{Q}$-Weil 
divisor on $X$. Then there is a sequence of
blowings-up \eqref{eq:bluppairs} such that
\begin{enumerate}
\item $(X_t, \tD_t)$ has only stable-snc singularities;
\item\label{conditiontwo} each $\s_{j+1}$ is an isomorphism over the locus of stable-snc points of $(X_j, \tD_j)$.
\end{enumerate}
\end{theorem}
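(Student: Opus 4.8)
The plan is to run the functorial embedded desingularization algorithm of \cite{BMinv,BMfunct}, driven by the Hilbert--Samuel function and its refinement the desingularization invariant $\inv$, but applied to data encoding the pair $(X,D)$ and stopped, point by point, as soon as a point becomes stable-snc, so that stable-snc points are never contained in a centre. First I would localize and embed: for $a\in X$ fix a minimal smooth embedding $X\hookrightarrow Z$, encode $X$ by its Hilbert--Samuel function together with a marked (idealistic) datum, and record $D$ by its components $D_j$ weighted by the coefficients $\al_j$. By Lemma \ref{lem:stablesnc} and the remark following Definition \ref{def:stablesncpairs}, the stable-snc condition for $(X,D)$ is intrinsic --- it holds in every minimal local embedding once it holds in one --- so it suffices to test it in a single $Z$, and the functoriality of $\inv$ will make the locally chosen centres glue to global smooth centres. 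Because we do not restrict to hypersurfaces, the Hilbert--Samuel function, not an order function, is the correct primary invariant for the (possibly non-equidimensional) $X$.

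The technical heart, following the philosophy of \cite{BMmin}, is to read the local normal form of $(X,D)$ off the invariant. The Hilbert--Samuel function of $X$ at $a$ already fixes the minimal embedding dimension and the combinatorial type --- the partition $\{I_i\}$ --- of a putative stable-snc configuration, while the transversality of Definition \ref{def:stablesnc} is a finer condition. The \emph{key lemma} I would establish is that whether $(X,D)$ is stable-snc at $a$ is determined by $\inv(a)$ together with geometric data (the number of components through $a$, the active exceptional divisors, and the coefficients $\al_j$) that is constant along the connected components of each stratum of $\inv$; equivalently, the stable-snc locus is open and closed in every $\inv$-stratum. Example \ref{ex:multiplicities} forces the $\al_j$ into this comparison, since there $(X,D)$ is stable-snc exactly when the coefficients on the two relevant components agree, and Example \ref{ex:snc-stablesnc} shows that mere snc is not enough --- the transversality must be visible --- which is precisely why stable-snc, rather than general snc, is the right stopping class.

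Granting the key lemma, the modified algorithm takes at stage $j$ the maximal stratum of $\inv$ on $(X_j,\tD_j)$ and deletes from it the stable-snc locus; since that locus is open and closed in the (smooth) stratum, what remains is a union of connected components of the stratum, hence a smooth closed centre $C_j$. Condition \eqref{conditiontwo} is then immediate: by construction $C_j$ is disjoint from the stable-snc locus of $(X_j,\tD_j)$, so $\s_{j+1}$ is an isomorphism over it. Moreover stable-snc points stay stable-snc, because over that (open) locus $\s_{j+1}$ is an isomorphism and the exceptional divisor added in forming $\tD_{j+1}$ lies over $C_j$, so the local picture at such points is unchanged.

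The main obstacle is the key lemma itself --- the clean, $\inv$-measurable detection of stable-snc points and hence the smoothness of the centres. A priori the stable-snc locus, although open, need be neither closed in nor a union of components of a maximal $\inv$-stratum; the work is to show that stable-snc-ness cannot jump within a connected component of a stratum, so that deleting it leaves a smooth centre. Here the transversality built into stable-snc --- absent in the three coordinate axes of Example \ref{ex:snc-stablesnc} --- is used essentially, and is exactly what separates this theorem from its failure for general snc (Example \ref{ex:counterex}). The secondary obstacle is termination: since we excise the stable-snc part of each centre, the standard monotonicity argument must be adapted to show that $\inv$ still strictly decreases on the part actually blown up and that after finitely many steps every point of $X_t$ has become stable-snc; functoriality (Remarks \ref{rem:main}) then follows from that of $\inv$. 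I expect the normal-form lemma of the second paragraph to carry the real content, with smoothness, termination, and functoriality then following the patterns of \cite{BMmin} and \cite{BV}.
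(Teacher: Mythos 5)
Your outer loop---delete the stable-snc locus from the maximal stratum of the invariant, blow up what remains, so that condition (2) holds by construction---does match the skeleton of the paper's algorithm. But the ``key lemma'' on which your whole argument rests is false as stated, and the paper's Examples \ref{ex:invnotdeterminingstablesncX} are constructed precisely to show this. Take $X=(x=y=0)\cup(w=z=0)$ and $Y=(x=y=0)\cup(x+w^2=y+wz=0)$ in $\mathbb{A}^4$, with $D=0$ and $E=0$. Both have two components through the origin, minimal embedding dimension $4$, components of dimension $2$, no exceptional divisors and no coefficients; moreover $H_{X,0}=H_{Y,0}$ and $\inv_X(0)=\inv_Y(0)$. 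Yet $X$ is stable-snc at $0$ while $Y$ is not, because the scheme-theoretic intersection of the components of $Y$ is singular at $0$. So the Hilbert--Samuel-driven desingularization invariant, together with \emph{all} the geometric data you list (number of components, exceptional divisors, coefficients $\al_j$), does not determine whether a point is stable-snc; no proof of your key lemma can exist in that form, and with it goes the smoothness/functoriality of your centres. This is exactly why the paper does not run the standard algorithm on $(X,D)$: it first makes all irreducible components of $X$ smooth by a simultaneous desingularization (Theorem \ref{thm:SimDes}), then replaces the Hilbert--Samuel function by the number of components $\ka$, for which a presentation is built in \S\ref{sec:DesProd}. The resulting invariant $\inv_\ka$ does admit a characterization of stable-snc points (Theorem \ref{thm:char}), but even that characterization involves two conditions invisible to the invariant---membership in a stratum $\Sigma_\Omega$ (control of the minimal embedding dimension and the codimensions of the components) and vanishing of the exponents $\mu_{H,i+1}$---which must be \emph{achieved} by dedicated cleaning blowings-up (Section \ref{sec:clean}) and by a separate step normalizing the embedding dimension along the maximal locus (Step 3 of the proof of Theorem \ref{thm:mainDEqualZero}), not merely read off the stratification; termination then needs the invariant to be defined over the (non-admissible) cleaning sequences (Remark \ref{rem:why}).

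A second substantive gap is the treatment of $D$ itself. The paper first reduces to triples $(X,D,E)$ separating the birational transform of $D$ from the exceptional divisors, makes $(X,E)$ stable-snc, and discards components of $D$ in $\Sing X\cup\Supp E$; after that, detecting stable-snc points of the pair still cannot be done with $\inv_\ka$ alone. Instead the paper inducts on the ordered components of $X$ using an inductive characterization (Proposition \ref{lemmafactorssnc}): the Hilbert--Samuel function of $\Supp D$ must take the special value $H_{\Omega,q}$ \emph{and} an obstruction ideal $J(X,D)$ (Definition \ref{definitionJ}) must be trivial. Eliminating $J$ requires its own desingularization-plus-cleaning loop (Section \ref{sec:desatsinglocusofX}), which is delicate because $J$ does not commute with blowings-up ($J(X',D')\neq J(X,D)'$ in general). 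Finally, the rational coefficients $\al_j$ (the issue behind Example \ref{ex:multiplicities}) are handled by a separate combinatorial pass on the non-reduced case (Section \ref{sec:fixingmultiplicities}), not by carrying them inside the invariant. None of these mechanisms---the prior smoothing of components, the substitute invariant $\inv_\ka$, the cleaning and embedding-dimension steps, the ideal $J$, the reduced/non-reduced split---appears in your proposal, and they are not technical conveniences: supplying workable substitutes for your unprovable key lemma is the actual content of the paper's proof.
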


\begin{remarks}\label{rem:main}
(1) In the special case that $D=0$, each $\tD_j$ is the exceptional divisor of the morphism $\s_1\circ \cdots \circ \s_j$,
so that condition (1) of Theorem \ref{thm:mainpairs} is a stronger
assertion than ``$X_t$ is stable-snc''.
\smallskip

\noindent
(2) In the special case that $X$ is smooth, we say that $D$ is a  \emph{simple normal crossings} or \emph{snc} divisor on 
$X$ if $(X,D)$ is stable-snc (i.e., Definition \ref{def:stablesncpairs} is satisfied with $p=0$ at every point of X). This means
that the components of $D$ are smooth and intersect transversely. Theorem \ref{thm:mainpairs} in this case provides 
\emph{log resolution of singularities} of $D$ by a sequence of blowings-up \eqref{eq:bluppairs} such that each 
$\sigma_{j+1}$ is an isomorphism over the snc locus of $\tD_j$. This is proved in \cite[Thm.\,1.5]{BDV}. Earlier versions can be found in \cite{Sz}, \cite[Sect.\,12]{BMinv}, \cite{Kolog} and \cite[Thm.\,3.1]{BMmin}.
\smallskip

\noindent
(3) The desingularization morphism of Theorem \ref{thm:mainpairs} is functorial
in the category of pairs $(X,D)$ with a fixed ordering on the components of $X$, and with respect to \'{e}tale (or
smooth) morphisms that preserve the number of irreducible components of $X$ and $D$ passing through each point. If 
$D=0$, then the sequence of blowings-up is independent of 
the ordering of the components of $X$.
Note that desingularization preserving only snc or stable-snc singularities 
cannot be functorial with respect to \'{e}tale morphisms in general
(as in the case of functorial resolution of singularities), because a normal
crossings point becomes snc after an \'{e}tale morphism. 
\end{remarks}

The following example shows that Theorem \ref{thm:mainpairs} does not hold for more general snc singularities.

\begin{example}\label{ex:counterex}
Consider $X:=(z=x=0)\cup(z=y=0)\cup(z+xw=x+y=0)\subset\mathbb{A}_{w,x,y,z}^{4}$. Then $X$ is snc at every point 
except the origin $(w=x=y=z=0)$, so the only blowing-up permissible as the first in the sequence \eqref{eq:bluppairs}
in Theorem \ref{thm:mainpairs} has centre the origin. In the ``$w$-chart'' with coordinates $(w,x/w,y/w,z/w)$, the strict transform $X'$ of $X$ is given by the same equations as $X$,
and the exceptional divisor $D' = (w=0)$. Therefore, $(X',D')$ is snc except at $0$, and the non-snc singularity at $0$
cannot be eliminated by continuing to blow up only non-snc points.
\end{example}

Theorem \ref{thm:mainpairs} follows from a stronger version, Theorem \ref{thm:maintriples}
below, for which it
will be convenient to work with triples $(X,D,E)$ that distinguish the birational transforms of $D$ from the exceptional divisors. In this notation, $(X,D)$ has the same meaning as in 
Definition \ref{def:stablesncpairs}, and $E$ is an ordered snc divisor on X in the sense
of Definition \ref{def:orderedsnc} following (usually with all coefficients $a_k = 1$).

\begin{definition}\label{def:orderedsnc}
Let $Z$ denote a smooth variety. An \emph{(ordered) snc divisor} $E$ on $Z$ is a finite linear combination
$\sum a_k H_k$ of (ordered) subvarieties $H_k$, where each $a \in Z$ admits a coordinate neighbourhood
in which every $H_k$ is a coordinate hypersurface. We identify the \emph{support} of $E$, $\supp\, E := \sum H_k$,
with the (ordered) set of smooth hypersurfaces $\{H_k\}$. The $H_k$ are called the \emph{components} of $E$.  

Let $X$ denote a variety. An \emph{(ordered) snc divisor} 
$E$ on $X$ is a finite linear combination
$\sum a_k H_k$ of (ordered) subvarieties $H_k$, such that each $a\in X$ admits a neighbourhood $U$ and
an embedding $X|_U \hookrightarrow Z$, $Z$ smooth, where $E|_U$ is induced by an 
(ordered) snc divisor $E_Z$ on $Z$
(and each nonempty $H_k|_U$ is the restriction of a component of $E_Z$). 
Note that the \emph{components} $H_k$ of $E$ need not be irreducible (or reduced).
When all $a_k=1$, we 
again identify $E$ with the (ordered) set of smooth hypersurfaces $\{H_k\}$.
 
We also assume
that $E$ is a Weil divisor (as in Definition \ref{def:stablesncpairs}). 
\end{definition}

\begin{remark}\label{rem:orderedsnc}
The latter assumption
only excludes the possibility that a component of $E$ contain an irreducible component of $X$.
This possibility does not arise, in any case, for the exceptional divisor of a sequence
of blowings-up as given by our main theorems. If we were to allow it, the proofs of Theorems
\ref{thm:mainpairs} and \ref{thm:maintriples} would simply require an additional step to
separate and blow up such irreducible components of $X$ (which contain no stable snc
points of $(X,E)$).
\end{remark}

Let $X$ denote a variety and let $E$ denote an snc divisor on $X$. Consider a sequence of 
blowings-up
\begin{equation}\label{eq:blupseq}
X = X_0 \stackrel{\s_1}{\longleftarrow} X_1 \longleftarrow \cdots
\stackrel{\s_{t}}{\longleftarrow} X_{t}\,,
\end{equation}
where each $\s_{j+1}$ has smooth centre $C_j \subset X_j$.
Write $E_0 = E$ and, for each $j=0,1,\ldots$, set $E_{j+1} :=$ the birational transform of $E_j$ (with the
induced ordering) plus the exceptional divisor $\s_{j+1}^{-1}(C_j)$ of $\s_{j+1}$ (as the last element).

\begin{definition}\label{def1:admiss}
A smooth blowing-up $\s: X' \to X$ (i.e., a blowing-up with smooth centre $C\subset X$) is 
\emph{admissible} (or \emph{admissible} for $(X,E)$) if $C$ is \emph{snc with respect to} $E$ (where the latter means that, for each $a \in C$, there is a neighbourhood $U$ of $a$ in $X$
and an embedding $X|_U \hookrightarrow Z$ as above, where $Z$ has a coordinate
system in which $C$ is a coordinate subspace
and each component of $E_Z$ is a coordinate hyperplane).
The sequence of blowings-up \eqref{eq:blupseq} is \emph{admissible}
if each $\s_{j+1}$ is admissible for $(X_j,E_j)$. We will speak of $j$ as a ``year'' in the ``history'' of blowings-up
\eqref{eq:blupseq}.
\end{definition}

It follows from Definition \ref{def1:admiss} that, if $E_j$ is snc and $\s_{j+1}$ is admissible, then $E_{j+1}$ is snc.

\begin{definition}\label{def:stablesnctriples}
We say that $(X,D,E)$ has (or is) \emph{stable simple normal crossings} (\emph{stable-snc}) at a point $a \in X$ if $(X,D+E)$ is stable-snc at $a$. We say that $(X,D,E)$ is \emph{stable-snc} if it is stable-snc at every point.
\end{definition}

\begin{definition}\label{not:seqembedded} \emph{Transform} of a triple $(X,D,E)$. 
Consider a sequence of blowings-up
\eqref{eq:blupseq} that is admissible for $(X,E)$.
Write $D_0=D$ and $E_0 = E$. For each $j = 0,1,\ldots$\,, set
$D_{j+1}:=$ the birational transform of $D_j$, and define 
$E_{j+1}$ as above. 
\end{definition}

Comparing this notation with that of Definition \ref{not:seq}, note that, if $E = 0$, then
$\tD_j = D_j + E_j$, for each $j$. The notation of Definitions \ref{not:seq} and \ref{not:seqembedded} 
will be used throughout 
the article. Superscripts will be used to denote irreducible components of varieties.

\begin{theorem}\label{thm:maintriples}
Let $X$ denote a (reduced) variety in characteristic zero.
Let $D$ denote a $\mathbb{Q}$-Weil divisor and
$E$ an ordered simple normal crossings divisor on $X$. Then there is a sequence of admissible smooth
blowings-up \eqref{eq:blupseq} such that
\begin{enumerate}
\item $(X_t, D_t, E_t)$ has only stable-snc singularities;
\item each $\s_{j+1}$ is an isomorphism over the locus of stable-snc points of $(X_j, D_j, E_j)$.
\end{enumerate}
Moreover, the association of the desingularization sequence \eqref{eq:blupseq} to
$(X, D, E)$ is functorial in the category of triples $(X, D, E)$ with a fixed ordering on the components of
$X$, and with respect to
\'etale (or smooth) morphisms that preserve the number of irreducible components of $X$
at every point. (In the category of such triples with $D=0$, an ordering of the components of $X$ is
not necessary for functoriality.)
\end{theorem}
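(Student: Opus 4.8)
The plan is to run a modified Bierstone--Milman desingularization algorithm in which the centres of blowing up are chosen so as never to meet the stable-snc locus. The invariant $\inv = \inv_{(X,E)}$ of \cite{BMinv}, \cite{BMfunct} is constructed locally, after embedding a neighbourhood of each point in a smooth variety, from the Hilbert--Samuel function of $X$ together with the associated coefficient ideals and the history of the exceptional divisor; the non-equidimensional case is handled as in \cite{BMmin}. The essential local input, generalizing the hypersurface computation of \cite{BV}, is a \emph{normal-form lemma}: I would evaluate $\inv$ at a stable-snc point of $(X,D,E)$ in the coordinates of Definition \ref{def:stablesncpairs} and show that it equals a distinguished value $\inv_{\mathrm{snc}}(a)$ depending only on the combinatorial type at $a$---the partition $\cup_i I_i$, the multiplicities $\alpha_j$ of $D$, and the orders of contact with the components of $E$. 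I would then establish the converse: that $\inv(a)=\inv_{\mathrm{snc}}(a)$, together with the general-position condition of Lemma \ref{lem:stablesnc}(5), characterizes the stable-snc points of $(X,D,E)$.

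With this comparison in hand, the centres are defined as follows. At each year $j$ the locus where $\inv$ strictly exceeds $\inv_{\mathrm{snc}}$, or where general position fails, is closed and disjoint from the stable-snc locus; the maximum locus of $\inv$ on this non-stable-snc set is smooth and, by the standard properties of the invariant, is snc with respect to $E_j$, hence furnishes an admissible centre $C_j$ in the sense of Definition \ref{def1:admiss}. Since $C_j$ avoids the stable-snc points by construction, $\s_{j+1}$ is automatically an isomorphism there, which yields condition (2) of the theorem.

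For termination I would invoke the main result of \cite{BMinv}: the values of $\inv$ lie in a well-ordered set, and blowing up its maximum locus strictly decreases the maximum value of $\inv$---one must only check that restricting attention to the non-stable-snc locus does not interfere with this decrease. A Noetherian induction then shows that after finitely many blowings-up every point satisfies $\inv=\inv_{\mathrm{snc}}$, which the normal-form lemma upgrades to the stable-snc property of $(X_t,D_t,E_t)$, giving condition (1). Functoriality is inherited from that of $\inv$ in \cite{BMfunct}: the centres are canonically determined by the invariant together with the ordered geometric data, and are preserved by \'etale or smooth morphisms that do not change the number of irreducible components of $X$ at a point. The ordering of the components of $X$ enters only to make the treatment of $D$ canonical (cf.\ Example \ref{ex:multiplicities}) and may therefore be dropped when $D=0$.

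The step I expect to be hardest is the normal-form lemma, and within it the detection of \emph{non-transversality at snc points}, illustrated by Example \ref{ex:snc-stablesnc}: the three coordinate axes in $\mathbb{A}^3$ are snc but not stable-snc, yet the Hilbert--Samuel function of $X$ alone does not record the failure of general position in the minimal embedding. I therefore expect to need an auxiliary, explicitly geometric quantity---the defect between the codimension of the intersection of the tangent spaces of the $X^{(i)}$ and the sum of their codimensions, as in \eqref{eq:codimstablesnc}---folded into $\inv_{\mathrm{snc}}(a)$. Verifying that this defect is upper semicontinuous and transforms predictably under admissible blowings-up, when components of different dimensions meet, is where the real difficulty lies; the hypersurface case of \cite{BV} sidesteps it entirely, since snc hypersurfaces are automatically stable.
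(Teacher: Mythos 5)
There is a genuine gap, and it sits exactly where you predicted the difficulty would be---but it is worse than a verification problem. Your algorithm is driven by the desingularization invariant $\inv_{(X,E)}$ built from the Hilbert--Samuel function of $X$, and the paper shows in Examples \ref{ex:invnotdeterminingstablesncX}(2) that this invariant \emph{cannot} detect stable-snc: $X=(x=y=0)\cup(w=z=0)$ and $Y=(x=y=0)\cup(x+w^2=y+wz=0)$ in $\mathbb{A}^4$ have identical Hilbert--Samuel functions and identical $\inv$ at the origin, yet $X$ is stable-snc and $Y$ is not. Your proposed patch---folding the tangent-space defect of \eqref{eq:codimstablesnc} into $\inv_{\mathrm{snc}}$---does distinguish these examples, but for the algorithm to work this defect must not merely be semicontinuous: it must admit a \emph{presentation} in the sense of Definition \ref{def:present}, i.e.\ a marked ideal whose admissible blowings-up are exactly those of the invariant, since otherwise there is no way to produce smooth admissible centres that decrease it, no way to propagate it through a history of blowings-up, and no termination argument. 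That construction is the main content of the paper, and the paper's solution is structurally different: it abandons the Hilbert--Samuel-based $\inv$ in favour of the invariant $\inv_\ka$ built on the \emph{number of irreducible components}, whose presentation $\sum_i\ucI^{(i)}$ (\S\ref{sec:DesProd}) only makes sense after all components have first been made smooth by a separate simultaneous-desingularization algorithm (Theorem \ref{thm:SimDes}); the characterization of stable-snc (Theorem \ref{thm:char}) then requires the extra conditions $a\in\Sigma_\Omega$ and $\mu_{H,i+1}=0$, which are achieved by ``cleaning'' blowings-up that are \emph{not} $\inv_\ka$-admissible (Section \ref{sec:clean}) plus an embedding-dimension equalization step. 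Your termination argument (``one must only check that restricting to the non-stable-snc locus does not interfere'') is precisely the point addressed by Lemma \ref{lem:clean} and Remark \ref{rem:why}: restarting the algorithm in the complement of the stable-snc locus as if in year zero can \emph{increase} the invariant, so the invariant must be kept defined across the non-admissible cleaning sequences.

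The second gap is that $D$ never enters your resolution invariant, only the target value $\inv_{\mathrm{snc}}$. So when $X$ is already smooth or stable-snc but $D$ is singular (say a cuspidal curve on a smooth surface, or two tangent divisors), $\inv_{(X,E)}$ is constant near the bad point, and ``the maximum locus of $\inv$ on the non-stable-snc set'' is the whole non-snc locus of $D$---in general neither smooth nor a usable centre; the invariant gives you no mechanism to improve $D$. The paper spends Sections \ref{subsec:char}--\ref{sec:fixingmultiplicities} on exactly this: an induction on the number of components of $X$, a stratification $\Sigma_{\Omega,q}$ ordered by Hilbert--Samuel functions $H_{\Omega,q}$ of $\Supp D$ (Lemma \ref{lem:HSlemma}), an obstruction ideal $J(X,D)$ (Definition \ref{definitionJ}, Proposition \ref{lemmafactorssnc}) needed because the Hilbert--Samuel function of $\Supp D$ is necessary but \emph{not sufficient} for stable-snc, and a final combinatorial step for non-reduced $D$---the multiplicity condition of Example \ref{ex:multiplicities} ($(xy=0)$ with $D=a_1D_1+a_2D_2$ is stable-snc iff $a_1=a_2$) is invisible to any invariant of $\Supp D$ and cannot be captured by your normal-form lemma as stated.
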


Theorem \ref{thm:mainpairs} is a consequence of Theorem \ref{thm:maintriples}. 

To prove Theorem \ref{thm:maintriples}, we construct the sequence of blowings-up in two main parts. We first make the
transform of $(X,E)$ stable-snc, and then perform further blowings-up to make the transform of $(X,D,E)$ stable-snc. 
Comparing this article with previous papers, the first part is rather analogous to \cite{BDV}, while the second is closer to
\cite{BV}. Nevertheless, the
main new arguments here are for the first part; the second part differs from \cite{BV} in a more technical way.
\smallskip

Several basic notions concerning the desingularization algorithm and the desingularization
invariant are prerequisite to the proofs of the main results. See the \emph{Crash course on the desingularization invariant} in \cite[Appendix]{BMmin}. We will use the ideas of the latter with
references but without recalling them in detail. (See also the summary in \cite[Sect.\,2]{BDV}.
The desingularization algorithm is used in \cite{BDV}, \cite{BMmin}, \cite{BV} mainly in the
case of a hypersurface or (weak desingularization of) an ideal. For desingularization of more
general varieties as treated here, the notion of \emph{presentation} of an invariant
(of origin in \cite{BMinv}) is a useful tool that will be recalled in Section 2 below, with examples
needed for the paper. We use the idea of \cite{BMDesAlg}, \cite{BMfunct} that, given a local invariant that admits a presentation, one can functorially construct a sequence of blowings-up 
along which the invariant never increases and eventually decreases.

Beyond Theorem \ref{thm:maintriples}, a number of techniques in this paper may be of interest  in other applications; in particular; other partial desingularization problems. In Section \ref{sec:SimDes}, for example, we give an algorithm for simultaneous desingularization of a finite collection of closed subvarieties of a given variety. 

\section{Presentation of an invariant}\label{sec:inv}
We will consider several local invariants of algebraic varieties with values in partially ordered
sets. These invariants provide different measures of singularity, and the desingularization algorithm
for an associated marked ideal (a \emph{presentation} of the invariant) is used to reduce the invariant to its
value at a general point. 

In \S\S\ref{sec:SimDes} and \ref{sec:DesProd}, we will illustrate these ideas by constructing presentations 
for two local invariants that intervene in our proofs of 
Theorems \ref{thm:mainpairs} and \ref{thm:maintriples}. The first is used to prove that any algebraic variety can be transformed to a variety
all of whose irreducible components are smooth, by a sequence of blowings-up that preserve points where
all components are already smooth (Theorem \ref{thm:SimDes}). This will be the first step in the proof of our
main result Theorem \ref{thm:mainpairs}; the approach is different from that of 
\cite{BDV} and \cite{BV}, so that
Theorem \ref{thm:mainpairs} involves an algorithm that differs from those of the latter, even in the special case 
that $X$ is a hypersurface. In the following sections, we will
remark certain simplifications of the remaining steps, relative to \cite{BDV} and \cite{BV}, that result from 
the use of Theorem \ref{thm:SimDes}.

Let $\La$ denote a partially ordered set, and let $\io$ denote a \emph{local invariant} 
with values in $\La$. This means that, given an algebraic variety $X$, there is a function 
$\io = \io_X: X \to \La$ such that,
for all $a \in X$, $\io(a)$ is an invariant of the local \'etale isomorphism class of $X$ at $a$.

We will assume that $\io$ satisfies the following three properties:
\begin{enumerate}
\item $\io$ is upper-semicontinuous; in particular, for all $a \in X$, $(\io(x) \geq \io(a)) := \{x \in X:
\io(x) \geq \io(a)\}$ is closed;

\item $\io$ is \emph{infinitesimally upper-semicontinuous}; i.e., for any smooth blowing-up
$\s: X' \to X$ such that $\io$ is locally constant on the centre of $\s$, $\io(a') \leq \io(a)$
whenever $a' \in X'$ and $a = \s(a')$;

\item any non-increasing sequence in the value set of $\io$ stabilizes.
\end{enumerate}

Properties (1) and (2) are needed for the notion of a presentation of $\io$. Property (3)
is needed to guarantee the termination of a desingularization algorithm based on the
invariant $\io$.

An important example
of a local invariant that satisfies the properties above is the \emph{Hilbert-Samuel function}
$\io(a) = H_{X,a}$ of the local ring $\cO_{X,a}$ (see Section \ref{sec:HS} and
\cite[\S1.3]{BMfunct}). The Hilbert-Samuel
function $H_{X,a} \in \IN^{\IN}$. The latter is partially ordered as follows: if $H_1, H_2 \in \IN^{\IN}$,
then $H_1 \leq H_2$ if $H_1(k) \leq H_2(k)$, for all $k \in \IN$.

The desingularization invariant is calculated using marked ideals \cite[Def.\,A.5]{BMmin} --- collections
of data that are computed iteratively on \emph{maximal contact} subspaces of increasing
codimension \cite[Def.\,A.11]{BMmin}. A \emph{marked ideal} $\ucI$ is a quintuple
$(Z,N,E,\cI,d)$, where $Z \supset N$ are smooth varieties, $E = \sum_{i=1}^s H_i$ is an snc
divisor on $Z$ that is transverse to $N$, $\cI \subset \cO_N$ is an ideal, and $d \in \IN$.
We will sometimes call $N$ a ``maximal contact subspace" by abuse of language, since it
typically arises in this way.
See \cite[{\S}A.4]{BMmin} for the important notions of \emph{admissible} blowing-up of a
marked ideal and of
 of \emph{equivalence} of marked ideals with
a common ambient variety $Z$. 

\begin{definition}\label{def:admiss}
Given a local invariant $\io$ and a variety $X$, we say that a 
sequence of blowings-up \eqref{eq:blupseq} of $X$ is \emph{admissible} for $(X,\io)$ 
or for $\io$ (or $\io$-admissible) if \eqref{eq:blupseq} is admissible for $(X,0)$ in the
sense of Definition \ref{def1:admiss} with $E=0$, and $\io$ is locally constant on each centre
of blowing up $C_j$.
\end{definition}

\begin{definition}\label{def:present}
Let $\io$ denote a local invariant satisfying properties (1) and (2) above. A \emph{presentation}
of $\io$ at $a \in X$ is a marked ideal $\ucI = (Z,N,0,\cI,d)$, where $X|_U \hookrightarrow Z$ is
a local embedding at $a$ (i.e., defined on a neighbourhood $U$ of $a$ in $X$) such that
\begin{enumerate} 
\item $(X|_U, \io)$ and $\ucI$ have the same sequences of admissible blowings-up
(i.e., a sequence of blowings-up is admissible for one if it is admissible for the other);
\item the equivalence class of $\ucI$ (over a sufficiently small neighbourhood $U$ of $a$)
depends uniquely on $\io$ and on $(Z,X|_U)$.
\end{enumerate}
\end{definition}

For example, the Hilbert-Samuel function $H_{X,\cdot}$ admits a presentation at any point.
In fact, given any local embedding $X|_U \hookrightarrow Z$ at a point $a$, $H_{X,\cdot}$
has a presentation $(Z,N,\ldots)$ at $a$ where $N$ is a minimal embedding variety of $X$
at $a$ (see \cite[Ch.\,III]{BMinv}). 

In general, even a simple local invariant need not admit a presentation
at a point of an arbitrary algebraic variety $X$. For example, does the \emph{local
embedding dimension} $e_{X,a}$ admit a presentation?

The purpose of a presentation is that, according to Definition \ref{def:present}, we can
decrease the invariant $\io$ over a given point $a$ by resolution of singularities of a 
corresponding presentation. When $\io$ decreases, we chose a new presentation and
repeat the process. Of course, when $\io$ decreases, we have not only the transform of $X$
but also an exceptional divisor; in general, therefore, we have to consider a variety together
with a simple normal crossings divisor (``boundary'') $E$.

We can generalize Definition \ref{def:admiss} to the case that $X$ is equipped with an snc
divisor $E$. In this situation, consider a sequence of $(X,E)$-admissible blowings-up 
\eqref{eq:blupseq}.
Let us write $\cE_j$ for successive birational
transforms of $E$, so that each $E_j = \cE_j$ plus the exceptional divisor of the morphism
given by the first $j$ blowings-up.
If $a \in X$, let $s(a)$ denote the number of components of $E$ at $a$. Likewise,
if $a \in X_j$, for any $j$, let $s(a)$ denote the number of components at $a$ of $\cE_j$. 
We consider
the invariant $(\io, s)$, where such pairs are ordered lexicographically, defined over an
$\io$-admissible sequence
of blowings-up \eqref{eq:blupseq}.

Suppose that $\ucI = (Z,N,0,\cI,d)$ is a presentation of $\io$ at $a \in X$, and that (near
$a$), $E$ is 
induced by an snc divisor on $Z$ (which, for simplicity, we also denote $E$).
For each component $H$ of $E$, let $\cI_H$ denote the ideal
of $H$ in $\cO_Z$ and consider the marked ideal $(\cI_H|_N, 1) := (Z,N,0,\cI_H|_N, 1)$. We
introduce the \emph{boundary} marked ideal $\cB := \sum_{H \ni a} (\cI_H|_N, 1)$ (see \cite[Def.\,A.8 and {\S}A.9]{BMinv}),
and define $\ucI^1 := \ucI + \ucB$. The equivalence class of the marked ideal $\ucI^1$ depends only
on that of $\ucI$ and on $E$, so that $\ucI^1$ is a presentation of $(\io, s)$ at
$a$ in the sense of an obvious generalization of Definition \ref{def:present}.

We define a \emph{desingularization invariant} $\inv = \inv_\io$ extending the
invariant $\io$ by 
$$
\inv(a)  = (\io(a), s(a), \inv_{\ucI^1}(a)),
$$
where $\inv_{\ucI^1}$ is the desingularization invariant $\inv_{\ucI^1}$ for the marked ideal $\ucI^1$ 
(see \cite[{App.}\,A]{BMmin}
and \cite{BMfunct}. The desingularization invariant $\inv$ is defined recursively over a sequence \eqref{eq:blupseq} of $\inv$-admissible blowings-up; i.e., for each $j$, if $\inv$ is defined over
$X=X_0 \leftarrow \cdots \leftarrow X_j$ and $\s_{j+1}$ is $\inv$-admissible, then $\inv$ extends to $X_{j+1}$,
and the properties (1)-(3) analogous to those of $\io$ above are satisfied by $\inv$ in the appropriate sense. The maximum locus of $\inv$ provides a global smooth centre of blowing up.

The desingularization invariant $\inv_{\ucJ}$ of a marked ideal $\ucJ$ depends only on the equivalence class
of $\ucJ$ and the dimension of the maximal contact subspace $N$. 
In order to get a well-defined semicontinuous
invariant $\inv_\io$, it is necessary to choose $N$ in a way that $\dim N$ has a canonical value; 
e.g., in a way that $\dim N$ depends only on $\io$ at $a$, or $\dim N$ is locally constant 
on $\{x:\,
\io(x) = \io(a)\}$. This is an important issue in \S\S\ref{sec:SimDes},\,\ref{sec:DesProd}
below.

Some of the technology of the desingularization invariant will be used in 
Sections \ref{sec:char} and \ref{sec:clean}.
Consider a sequence \eqref{eq:blupseq} of $\inv_\io$-admissible blowings-up. Let $a \in X_j$.
The desingularization invariant $\inv = \inv_\io = (\io(a), s(a), \inv_{\ucI^1}(a))$ is a sequence
$(\nu_{1}(a), s_1(a), \ldots, \nu_q(a), s_q(a), \nu_{q+1}(a))$, where $\nu_{1}(a) = \io(a)$, $s_1(a) = s(a)$ and
$\inv_{\ucI^1}(a) = (\nu_{2}(a), s_2(a), \ldots, \nu_{q+1}(a))$; each $s_k(a)$ is a nonnegative integer counting
the number of elements of a certain block of $E_j$ at $a$, $\nu_{k+1}(a)$ is a positive rational number, $1 \leq k < q$,
and $\nu_{q+1}(a)$ is either $0$ or $\infty$. The successive pairs $(\nu_{k+1}(a), s_{k+1}(a))$, $k\geq 1$, are
calculated using marked ideals $\ucI^k = (Z,N^k,\allowbreak\cE^k,\allowbreak\cI^k,d^k)$, where $N^1 = N$ and
$N^{k+1}$ had codimension $1$ in $N^k$. 

We write $\cI^k$ as the product $\cM(\ucI^k)\cdot\cR(\ucI^k)$ of its
\emph{monomial} and \emph{residual parts}. $\cM(\ucI^k)$ is the monomial part with respect
to $\cE^k$; i.e., the product of the ideals
$\cI_H$, $H \in \cE^k(a)$, each to the power $\ord_{H,a}\cI^k$,
where $\ord_{H,a}$ denotes the order along $H$ at $a$.
We set $\nu_{k+1}(a) := \ord_a \cR(\ucI^k)/d^k$ and
$\mu_{H,k+1}(a) := \ord_{H,a}\cI^k/d^k$, $H \in \cE^k(a)$; both are invariants
of the \emph{equivalence class} of $\ucI^k$ and $\dim N^k$
(see \cite[Def.\,5.10]{BMmin}). The marked ideals $\ucI^k$ are constructed iteratively (on the maximal
contact subspaces $N^k$ of decreasing dimension; the construction terminates when
$\nu_{k+1}(a) = 0$ or $\infty$. 

The passage from $\ucI^k$ to $\ucI^{k+1}$ actually involves two steps: first, from $\ucI^k$ to a 
\emph{companion ideal} $\ucG(\ucI^k)$ defined using the product decomposition of $\cI^k$ above, and
secondly from $\ucG(\ucI^k)$ to $\ucI^{k+1}$ as the \emph{coefficient ideal plus boundary}. 
For more details, see \cite[Appendix]{BMmin} and \cite[Sect.\,2]{BDV}.

\subsection{Simultaneous desingularization of a collection of varieties}\label{sec:SimDes}

\begin{theorem}\label{thm:SimDes}
Let $X$ denote a (reduced) algebraic variety $X$. Then there is a finite sequence of admissible smooth blowings-up \eqref{eq:bluppairs} such that
\begin{enumerate}
\item every irreducible component of $X_t$ is smooth;
\item each $\s_{j+1}$ is an isomorphism over the locus of points where all components of $X_j$ are smooth.
\end{enumerate}
Moreover, given an snc divisor $E$ on $X$, there is a sequence of smooth blowings-up as above  
which is admissible for $(X,E)$. The association of the desingularization sequence to
$(X,E)$ is functorial with respect to
\'{e}tale morphisms that preserve the number of irreducible components of $X$
at every point.
\end{theorem}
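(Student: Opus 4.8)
The plan is to realize Theorem \ref{thm:SimDes} as an instance of the general mechanism of Section \ref{sec:inv}: I would exhibit a local invariant $\io$ whose minimal value at a point $a$ is attained exactly when every irreducible component of $X$ through $a$ is smooth at $a$, verify that $\io$ satisfies properties (1)--(3), and produce a presentation of $\io$ in the sense of Definition \ref{def:present}. The general machinery then yields a functorial sequence of $\inv_\io$-admissible blowings-up that lowers $\inv_\io$ until it reaches its minimal value everywhere; by admissibility each centre lies in the locus where $\io$ is not minimal, i.e. where some component is singular, which is precisely condition (2), while the terminal value of $\io$ gives condition (1).

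For the invariant itself, the naive choice $\max_i H_{X^{(i)},a}$ fails, because a singular component of small dimension can be invisible beneath a smooth component of larger dimension. I would therefore grade by dimension: writing $d_1 > d_2 > \cdots$ for the distinct dimensions of the components $X^{(i)}$ through $a$, I set $\io(a) := (H^{(d_1)}(a), H^{(d_2)}(a), \dots)$, ordered lexicographically on the dimension index and by the product order on $\IN^\IN$ within each block, where $H^{(d)}(a)$ records the Hilbert--Samuel data of the $d$-dimensional components through $a$. Then $\io(a)$ attains its minimal (``smooth'') value if and only if every $X^{(i)} \ni a$ is smooth at $a$. Upper-semicontinuity and infinitesimal upper-semicontinuity would follow from the corresponding properties of $H_{X^{(i)},\cdot}$ together with the fact that passing to a strict transform creates no new components, and property (3) from the analogous property for the Hilbert--Samuel function.

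The key step is the presentation. At a point $a$ I would focus on the top nonsmooth block: the largest $d$ such that some $d$-dimensional component through $a$ is singular. Each such component $X^{(i)}$, in a common smooth embedding $X|_U \hookrightarrow Z$, carries the standard Hilbert--Samuel presentation $\ucI_i = (Z, N_i, 0, \cI_i, d_i)$ on its minimal embedding variety $N_i$ (as recalled after Definition \ref{def:present}). I would combine the finitely many components achieving the maximal block value into a single marked ideal, symmetrically in those components so that no ordering is used, and show that its equivalence class depends only on $\io$ and on $(Z, X|_U)$. Because the component dimension and the minimal-embedding dimension are fixed on the block, this combination has a canonical maximal-contact dimension $\dim N$, which is exactly the well-definedness issue emphasized in Section \ref{sec:inv}. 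The resulting marked ideal, once augmented by the boundary ideal $\cB$ attached to $E$ to form $\ucI + \cB$, would be a presentation of $(\io, s)$ and hence furnish $\inv_\io$.

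I expect the combination of several equally singular components of the same dimension into one canonical marked ideal to be the main obstacle: the summands $\ucI_i$ live a priori on different maximal-contact subspaces $N_i$, so they cannot simply be added, and the output must be independent of any labelling of the tied components in order to secure functoriality without an ordering of the components of $X$. Once this presentation is in hand, the remaining points --- admissibility and isomorphism over the smooth-component locus (condition (2)), termination (condition (1)), compatibility with the snc divisor $E$ via $\ucI + \cB$, and functoriality under \'etale morphisms that preserve the number of components (which preserve $\io$ block by block) --- should follow formally from the general construction of $\inv_\io$ recalled in Section \ref{sec:inv}.
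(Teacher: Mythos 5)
Your overall strategy --- build a local invariant out of Hilbert--Samuel functions of the components, give it a presentation, and run the $\inv_\io$ machinery of Section \ref{sec:inv} --- is the same as the paper's, but your proposal has two genuine gaps, and they are exactly the places where the paper's proof does its real work.

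First, the step you yourself flag as ``the main obstacle'' (combining the presentations of several components, which a priori live on different maximal-contact subspaces $N_i$) is the crux, and you leave it unresolved. The paper dissolves the obstacle by choosing a different invariant: $\io_{X,a} := (H_{X,a}, H_{X^{(1)},a},\ldots,H_{X^{(m)},a})$, with the Hilbert--Samuel function of $X$ \emph{itself} as the lexicographically leading entry, and the fixed global list of all $m$ components (padded by $H_{X^{(i)},a}=0$ when $a\notin X^{(i)}$, and compared in the \emph{product} order, so no ordering of components is needed for functoriality). With this choice, $H_{X,\cdot}$ and every $H_{X^{(i)},\cdot}$ admit presentations $(Z,N,0,\cdot,\cdot)$ on one \emph{common} subspace $N$, namely a minimal embedding variety of the whole $X$ at $a$ (by \cite{BMinv}, Ch.~III), and the marked ideals can then literally be added: $\ucH := \ucI + \sum_{\{i:\,a\in X^{(i)}\}}\ucI^{(i)}$. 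The canonical value of $\dim N$ is $e_{X,a}=H_{X,a}(1)-1$, which is determined by the leading entry of the invariant and hence constant on its level sets --- this is what makes $\inv_\io$ well defined. Your dimension-graded invariant discards $H_{X,a}$, so you have no canonical common $N$ and no way to form the sum; ``the minimal-embedding dimension is fixed on the block'' does not supply one, since the tied components have different minimal embedding varieties and the embedding dimension of their union is not recorded by your invariant.

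Second, your claim that ``by admissibility each centre lies in the locus where $\io$ is not minimal, i.e.\ where some component is singular'' is false, and this is where condition (2) actually requires an argument. Admissibility only forces $\io$ to be locally constant on the centre; it does not keep the maximum locus of $\inv_\io$ out of the locus $W$ where all components are smooth. Take $X$ to be the three coordinate axes in $\mathbb{A}^3$ together with a disjoint cuspidal curve: at the triple point all components are smooth, yet the value of your invariant there (three nonzero smooth entries in the dimension-one block) is strictly larger than at nearby points and maximal in $X$ (it is incomparable to the value at the cusp), so the unmodified algorithm would blow it up, violating (2). There is no single ``minimal smooth value'': $W$ is a union of many level sets. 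The paper therefore \emph{modifies} the algorithm by a selection step that you omit: at each stage one blows up only the maximum locus of $\inv_\io$ inside $X\setminus W$. The point that then needs proof is that this selected centre is closed in $X$, not merely in $X\setminus W$; this holds because the Hilbert--Samuel entries distinguish smooth from singular points, so no value of $\io$ occurs both in $W$ and in $X\setminus W$, and hence the level set of the selected (maximal on $X\setminus W$) value never meets $W$. Without this selection device --- or some other mechanism placing ``some component is singular'' in control of the algorithm's centres --- your construction does not satisfy condition (2).
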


\begin{remark}\label{rem:order} Consider two local invariants $\io_1,\,\io_2$ with values
in partially-ordered sets $\La_1,\,\La_2$, respectively. Given a variety $X$, we have
$(\io_1,\io_2): X \to \La_1 \times \La_2$. There are two natural partial orders on $\La_1 \times \La_2$:
(1) the \emph{product order}, $(\la_1,\la_2) \geq (\ka_1,\ka_2)$ if $\la_1 \geq \ka_1$ and
$\la_2 \geq \ka_2$, and (2) the lexicographic order. Clearly, for either order, $(\io_1,\io_2)$ is semicontinuous, and infinitesimally semicontinuous, and any non-increasing sequence in its
value set stabilizes. The maximal loci of $(\io_1,\io_2)$ with respect
to the two orders coincide locally at a point of $X$, but not necessarily globally.

Suppose that
$\ucI_1,\,\ucI_2$ are presentations of $\io_1,\,\io_2$ (respectively) at a point $a \in X$. Assume that 
$\ucI_1,\,\ucI_2$ have common ambient variety $Z$ and common maximal contact subvariety $N$.
Then $\ucI_1 + \ucI_2$ is a presentation of $(\io_1,\io_2)$, with respect to either order, but
the desingularization algorithms based on $(\io_1,\io_2)$, for the two orders need not coincide:
the invariant tells us in what order to assemble the local centres of blowing up given by 
presentations, and this depends on the partial order on $\La_1 \times \La_2$.
\end{remark}

\begin{proof}[Proof of Theorem \ref{thm:SimDes}]
 Let $X^{(1)},\ldots,X^{(m)}$ denote the irreducible components of $X$. Consider the local invariant 
 $\io_{X,a}:=(H_{X,a},H_{X^{(1)},a},\ldots,H_{X^{(m)},a})$, $a\in X$, given by the Hilbert-Samuel functions of the local rings of $X$ and the $X^{(i)}$ at $a$ (with $H_{X^{(i)},a}=0$ if $a\notin X^{(i)}$).
 
We consider $(H,H_1,\ldots,H_m) \in (\IN^{\IN})^{m+1}$ as a pair $(H, (H_1,\ldots,H_m))
\in \IN^{\IN} \times (\IN^{\IN})^{m}$, and we use the product order on $\{(H_1,\ldots,H_m)
\in (\IN^{\IN})^{m}$, but the lexicographic order on pairs in $\IN^{\IN} \times (\IN^{\IN})^{m}$.

Given $a \in X$, there is a local embedding $X|_U \hookrightarrow Z$ such that $E$ is induced
by an snc divisor on $Z$, and the Hilbert-Samuel functions $H_{X,\cdot}$ and $H_{X^{(i)},\cdot}$,
$i=1,\ldots,m$, admit presentations $\ucI=(Z,N,0,\cI,d)$ and $\ucI^{(i)}=(Z,N,0,\cI^{(i)},d_i)$, $i=1,\ldots,m$, where $N$ is a minimal embedding variety for $X$ at $a$. Then
$$
\ucH := \ucI + \sum_{\{i:\,a\in X^{(i)}\}}\ucI^{(i)}
$$
is a presentation of $\io_{X,\cdot}$ at $a$. We can extend $\io = \io_{X,\cdot}$ to a desingularization
invariant $\inv_{\io} = (\io_{X,a}, s(a), \inv_{\ucI^1}(a))$, as above, where $\ucI^1 = \ucH + \ucB$. 
Since we are using the product order
on $(\IN^{\IN})^{m}$, $\inv_{\io}$ and the 
resulting desingularization algorithm do not depend on the ordering
of the components $X^{(i)}$.

We modify this desingularization algorithm by making a selection from the sequence 
of centres of blowings-up,
in the following way.  At each step, consider the locus of points $W$ where all components of (the
transform of) $X$ are smooth. Of course $W$ is open in $X$. Moreover, the maximum locus of
$\inv_{\io}$ in $X\backslash W$ is closed in $X$, since the Hilbert-Samuel function distinguishes
smooth from singular points (so that $\io_{X,\cdot}$ distinguishes points where all components
are smooth from points where one is singular). Therefore, at each step, we can blow up
the maximum locus of $\inv_{\io}$ in $X\backslash W$, and eventually $W = X$. 
\end{proof}

\begin{remark}\label{rem:laterSimDes}
More general families of varieties can also be simultaneously desingularized
as in Theorem \ref{thm:SimDes}. See Step 3 of the proof of Theorem 
\ref{thm:mainDEqualZero} for another application of the idea above.
\end{remark}

\subsection{Presentation of the number of irreducible components}\label{sec:DesProd}
Let $X$ denote a reduced algebraic variety. Assume that all irreducible components $X^{(i)}$ of
$X$ are smooth. For all $a\in X$ let $\kappa(a)=\kappa_{X}(a)$ denote the number of irreducible components of $X$ at $a$. 

Let $a \in X$. Consider a local embedding $X|_U \hookrightarrow Z$ at $a$,
and a smooth subvariety $N$ of $Z$ containing $\bigcap_{\{i:\, a \in X^{(i)}\}} X^{(i)}$ 
(restricted to $U$).
For each $i$,
 $\ucI^{(i)}$ denote the marked ideal $\ucI^{(i)}=(Z,N,0,\cI^{(i)}|_N,1)$, where $\cI^{(i)}$ is the ideal 
of $X^{(i)}$ in $\cO_Z$. Define $\ucI_{\Pi(X)}^N:=\sum_{\{i:\, a\in X^{(i)}\}}\ucI^{(i)}$. 
Clearly, $\cosupp \ucI_{\Pi(X)}^N$ is the constant locus $(\ka(x) = \ka(a))$ of $\ka$
if $U$ is small enough. 

Consider
a blowing-up $\s: X' \to X$ over $U$, with smooth centre in $\cosupp \ucI_{\Pi(X)}^N$.
Then the transform of each marked ideal $\ucI^{(i)}$ is given by the ideal $u^{-1}\cdot\sigma^{*}(I^{(i)})|_{N'}$,
where $u$ is (a local generator of the ideal of) the exception divisor of $\s$, and $N'$ is the
strict transform of $N$. Since $X^{(i)}$ is smooth, $u^{-1}\cdot\sigma^{*}(I^{(i)})$ defines the
strict transform of $X^{(i)}$. Therefore, the transform of the marked ideal $\ucI_{\Pi(X)}^N$
equals $\ucI_{\Pi(X')}^{N'}$, where $X'$ is the strict transform of $X$. It is then easy to see
that $\ucI_{\Pi(X)}^N$ is a presentation of the invariant $\ka$ at $a$.

We can
use the marked ideal $\ucI_{\Pi(X)}^{N}$ to extend $\ka$ to a desingularization invariant
$\inv_\ka^N(a) = (\ka(a), s(a), \inv_{\ucI^1}(a))$, as above. (In particular, $\ucI^1 = \ucI_{\Pi(X)}^{N}$ plus
boundary, where we are allowing a given snc divisor $E$).

\begin{remark}\label{rem:kinv}
Recall that $\inv_{\ucJ}$ above depends on $\dim N$ and that, in order to get a global
desingularization algorithm, we need to choose $N$ in a way that $\dim N$ has a
canonical value. We can achieve this simply by taking $N = X^{(i)}$, for
any $i$ such that $X^{(i)}$ is of minimal dimension among the components of $X$ at $a$.
With $N$ chosen in this way, the equivalence class of the marked ideal $\ucI_{\Pi(X)}^N$ 
plus boundary depends only on $X$ and $E$ at $a$, and $\inv_\ka := \inv_\ka^N$ is
globally semicontinuous.
\end{remark}

In Section \ref{sec:char}, we will use $\inv_\ka$ to give a characterization of the condition stable-snc. 


\section{Characterization of stable-snc singularities of a variety with snc divisor}\label{sec:char}
Consider an algebraic variety $X$ with snc divisor $E$. Assume that all irreducible
components of $X$ are smooth. The main purpose of this section is to characterize stable-snc singularities of $(X_j, E_j)$, over a sequence \eqref{eq:blupseq} of admissible blowings-up (see Theorem \ref{thm:char}). This section is a
generalization of \cite[Sect.\,3]{BDV}, but a presentation of the invariant $\ka = \ka_X$ (\S\ref{sec:DesProd})
plays a new role, and the assumption of smooth irreducible components allows some simplification.

Recall the following geometric characterization of stable-snc singularities of $X$, from Lemma \ref{lem:stablesnc}.
Let $a \in X$ and let $Z$ denote a smooth local embedding variety of $X$ at $a$. Let $X^{(1)},\ldots,X^{(m)}$
denote the irreducible components of $X$ at $a$ and let $c_i$ denote the codimension of $X^{(i)}$ in $Z$, for each $i$.
Then $X$ is stable-snc at $a$ if and only if (the scheme-theoretic intersection) $\cap_{i=1}^{m}X_{i}$ is smooth
and of codimension
\begin{equation}\label{eq:codimstablesnc}
c = \sum_{i=1}^{m}c_{i}-(m-1)(\dim Z_a - e_{X,a})
\end{equation}
at $a$, where $e_{X,a}$ denotes the minimal embedding dimension of $X$ at $a$.

\begin{example}
Let $X= X^{(1)} \cup X^{(2)} \subset\mathbb{A}^5$, where $X^{(1)}=(x=y=0)$ and $X^{(2)}=(x+uz=y+ut=0)$. Then 
$\mathbb{A}^5$ is a minimal embedding variety at the origin, and $X^{(1)}\cap X^{(2)}=(x=y=uz=ut=0)$. Since 
$X^{(1)}\cap X^{(2)}$  is 
not smooth at $0$, $X$ is not stable-snc at $0$. On the other hand, $X^{(1)}\cap X^{(2)}$ coincides with
$(x=y=z=t=0)$ at a nonzero point $a$ of the latter, so that $X$ is stable-snc at $a$, by \eqref{eq:codimstablesnc}.
\end{example}

The following definition describes the special values that $\inv_\ka$ can take at a stable-snc point in any
year $j$ of a history of $\inv_\ka$-admissible blowings-up (see Lemma \ref{lem:snc}).

\begin{definition}\label{def:specialinv}
Consider $c=(c_1,c_2,\ldots,c_{m})\in\mathbb{N}^m$, and $s=(s_1,\ldots,s_d)\in\mathbb{N}^d$, set
\[
\inv_{c,s}:=(m,s_1,1,s_2,\ldots,1,s_d,1,0,\ldots,1,0,\infty),
\]
where the total number of pairs (before $\infty$) is
\[
r:= |c|+|s|-\max\{c_i\}, \quad |c| := \sum_{i=1}^m c_i,\  |s| := \sum_{k=1}^d s_k
\]
(cf. \cite[Def.\,2.1]{BDV}).
\end{definition}

The $s_k$ in Definition \ref{def:specialinv} will represent the sizes of certain blocks of exceptional divisors. The $c_i$ 
will eventually be the codimensions of the 
components of $X$ in a local minimal embedding variety. The term $\max\{c_i\}$ appears in the expression for $r$ because we are using a presentation of $\ka$ with maximal contact variety $N =$ a component of $X$ of smallest dimension (see Remark \ref{rem:kinv}).

Theorem \ref{thm:char} shows, in particular, that in year zero (i.e., before any blowings-up),
stable-snc singularities can be characterized using the invariant $\inv_\ka$ together with 
the dimensions of 
a minimal embedding variety and the irreducible components of $X$. The first example
following shows that $\inv_\ka$ alone is not enough to characterize stable-snc, while the
second shows that we cannot replace $\inv_\ka$ by the desingularization invariant $\inv_X$
based on the Hilbert-Samuel function.

\begin{examples}\label{ex:invnotdeterminingstablesncX}
(1) Consider $X= (xyz=0)$ and $Y = (x=yz=0)\cup(z=0)$ in $\mathbb{A}^3$. In each case, $\ka$ has a
presentation at $0$ given by the marked ideal $(\mathbb{A}^3,(z=0),0,(x,y),1)$ (see Remark
\ref{rem:kinv}) and $\inv_\ka(0) =(3,0,1,0,1,0,\infty)$. But $X$ is stable-snc while $Y$ is snc but
not stable-snc at $0$.
\smallskip

\noindent
(2) Consider $X = (x=y=0) \cup (w=z=0)$ and $Y = (x=y=0)\cup(x+w^2=y+wz=0)$ in 
$\mathbb{A}^{4}$, with $E = 0$. The $X$ is stable-snc, while $Y$ is not because the 
scheme-theoretic intersection of its components is not smooth at $0$. $X$ and $Y$ each have
minimal embedding dimension $4$ and two components of dimension $2$ at $0$. The ideal
of $Y$ is $(x, y)\cap(x+w^2,y+wz)=(x^2+xw^2,xy+yw^2,y^2+ywz,yw-xz)$. Then $H_{X,0}
= H_{Y,0}$, and $\inv_{X}(0) = \inv_Y(0)=(H,0,1,0,1,0,1,0,\infty)$, where $H =H_{X,0}$.
\end{examples}

\begin{definition}\label{def:sig}
We consider sequences $\Omega=(e,c_1,\ldots,c_{m})$, where $e\in\mathbb{N}$ and $c_1\geq c_2\geq\ldots\geq c_m\geq0$ are integers. Let $\Sigma_\Omega = \Sigma_\Omega(X)$ denote the set of points $a \in X$ where $X$ has local embedding dimension $e_{X,a} = e$ and exactly 
$m$ irreducible components of $X$ of codimensions $c_1,\ldots,c_m$ in a minimal embedding
variety. See also Definition \ref{def:sigmapq}.
\end{definition}

The sequence of codimensions $c_i$ is taken in decreasing order in this definition
because we do not want $\Sigma_{\Omega}$ to depend on an ordering of the $c_i$ 
(since $\inv_{c,s}$ does not depend on an ordering).  

The following results deal with stable-snc singularities of the transforms $(X_j,E_j)$ of $(X,E)$
over a sequence \eqref{eq:blupseq} of $\inv_\ka$-admissible blowings-up. We are assuming
that all irreducible components of $X$ are smooth. For brevity of
notation, we will write $(X_j,E_j)$ simply as $(X,E)$. See \cite[\S A.2]{BMmin} or
\cite[Sect.\,2]{BDV} for the definition of the blocks of exceptional divisors $E^i(a)$ that
are counted by the invariants $s_i(a)$.

\begin{lemma}\label{lem:snc} \emph{(Compare with \cite[Lemma 3.1]{BDV}.)}  Suppose that
all irreducible components of $X$ are smooth.
Consider $(X,E) = (X_q,E_q)$, in some year $q$ of a history 
\eqref{eq:blupseq} of $\inv_\ka$-admissible blowings-up. Let $a \in X$. 
If $(X,E)$ is stable-snc at $a$, then 
$\inv_\ka(a) = \inv_{c,s}$, for some $s=(s_1,\ldots,s_d)$, with $c=(c_1,\ldots,c_m)$, where $m = \ka(a)$ and
each $c_k$ is the codimension of an irreducible component $X^{(k)}$ of $X$ at $a$ in a local minimal embedding
variety for $X$ (so that $a \in \Sigma_\Omega(X)$, where 
$\Omega=(e_{X,a},c_1,\ldots,c_m)$).
\end{lemma}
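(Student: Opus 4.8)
The plan is to reduce the computation of $\inv_\ka(a)$ to an explicit combinatorial resolution, using the coordinate normal form available at a stable-snc point. Since all components of $X$ are smooth and $(X,E)$ is stable-snc at $a$, Lemma \ref{lem:stablesnc}(2), together with Definition \ref{def:stablesncpairs} (with $E$ in the role of $D$), provides a \emph{minimal} local embedding variety $Z$ and regular coordinates $(x_1,\ldots,x_p,y_1,\ldots,y_q,w_1,\ldots,w_s)$ at $a$ in which each component is a coordinate subspace $X^{(i)}=(\{x_k=0\}_{k\in I_i})$ for a partition $\{I_i\}$ of $\{1,\ldots,p\}$, and each component of $E$ through $a$ is a coordinate hyperplane $(y_j=0)$. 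Then $c_i=|I_i|$ is the codimension of $X^{(i)}$ in $Z$, so $|c|=\sum_i c_i=p$ and $q$ equals the number $|s|$ of components of $E$ at $a$. After reordering so that $c_1=\max_i c_i$, I take the maximal contact variety to be $N=X^{(1)}$, as prescribed by Remark \ref{rem:kinv}, so that $N$ has codimension $\max_i c_i$ in $Z$.

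First I would identify the marked ideal $\ucI^1=\ucI_{\Pi(X)}^N+\ucB$ explicitly. Restricting to $N=X^{(1)}$, the $i=1$ term vanishes and the ideal of $\ucI_{\Pi(X)}^N$ becomes $\sum_{i\geq 2}(x_k:k\in I_i)=(x_k:k\notin I_1)$, while $\ucB$ contributes $(y_j:\,(y_j=0)\ni a)$; hence the ideal of $\ucI^1$ is $\mathcal{L}=(x_k:k\notin I_1,\ y_j:\,j)$, the ideal of a smooth coordinate subvariety $V\subset N$ of codimension $(p-c_1)+q=|c|-\max_i c_i+|s|=r$ in $N$. Crucially, the $\ka$-presentation carries the empty associated divisor $\cE=0$, and by \S\ref{sec:DesProd} this is preserved along the admissible history: the exceptional divisors of the years $1,\ldots,q$ are divided out of $\ucI_{\Pi(X)}^N$ and reappear only inside $E$, entering $\ucI^1$ through the boundary $\ucB$. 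Thus $\ucI^1$ is, up to equivalence, the marked ideal of a smooth centre with empty boundary.

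The heart of the argument is to evaluate $\inv_{\ucI^1}$. Because $\mathcal{L}$ is the ideal of a smooth coordinate subvariety and $\cE=0$, the monomial part $\cM(\ucI^k)$ is trivial at every stage of the maximal contact descent: each residual order $\nu_{k+1}(a)=\ord_a\cR(\ucI^k)/d^k$ equals $1$, the maximal contact varieties $N^k$ form a nested chain of coordinate subspaces, and the descent terminates exactly when $V$ is reached, after precisely $r$ steps, with terminal value $\nu=\infty$. (That no monomial phase intervenes, so that the terminal value is $\infty$ rather than $0$, is the feature that distinguishes stable-snc points.) This shows that $\inv_{\ucI^1}(a)$ consists of $r$ pairs, each with $\nu$-entry equal to $1$, followed by $\infty$; combining with the top pair $(\ka(a),s(a))=(m,s_1)$ gives
\[
\inv_\ka(a)=(m,s_1,1,s_2,\ldots,1,s_d,1,0,\ldots,1,0,\infty),
\]
which is exactly the shape $\inv_{c,s}$ of Definition \ref{def:specialinv} for $c=(c_1,\ldots,c_m)$, with $a\in\Sigma_\Omega(X)$ for $\Omega=(e_{X,a},c_1,\ldots,c_m)$.

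I expect the main obstacle to be the bookkeeping for the divisor $E$: verifying that the integers $s_k(a)$ produced along the descent are indeed the sizes of the successive blocks of $E$ at $a$, placed at the correct levels (so that the nonzero $s_k$ appear first and the remaining levels carry $s_k=0$), and that they sum to $|s|=q$, matching the count $r$ of $(1,\cdot)$ pairs. This is precisely where the admissible history enters, and it is the analogue in the present setting of \cite[Lemma 3.1]{BDV}; I would follow that argument, using the stable-snc coordinate normal form at each level to ensure that the relevant components of $E$ remain transverse coordinate hyperplanes, and hence contribute only to the block counts $s_k$ while never raising a residual order above $1$ nor triggering a monomial phase.
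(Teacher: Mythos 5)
Your overall skeleton coincides with the paper's: put $(X,E)$ in the stable-snc coordinate normal form, take $N=X^{(1)}$ of maximal codimension as in Remark \ref{rem:kinv}, descend through the generators $f_{k,l}$ ($k\geq 2$) and the generators of components of $E$ as successive maximal contact hypersurfaces, get residual order $1$ at every stage, and terminate with coefficient ideal $0$ (hence $\infty$) after exactly $r=|c|+|s|-\max\{c_i\}$ steps. However, the claim on which your computation rests --- that the associated divisor ``$\cE=0$ is preserved along the admissible history,'' so that $\ucI^1$ is, up to equivalence, the marked ideal of a smooth centre with empty boundary and ``no monomial phase intervenes'' for that reason --- is false, and it is false at exactly the point where the lemma has content in year $q>0$. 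What \S\ref{sec:DesProd} shows is that the presentation $\ucI_{\Pi(X)}^N$ transforms into $\ucI_{\Pi(X')}^{N'}$; but $\inv_\ka$ is not computed by taking this transformed presentation and adding the full boundary of all components of $E$ at $a$ as if in year zero. It is defined recursively over the history: only the block $E^1(a)$ enters the ideal $\cI^1$ (its generators $u_1^j$, $j=1,\ldots,s_1(a)$), while the remaining components form the associated divisor $\cE^1(a)=E(a)\setminus E^1(a)$ of $\ucI^1$, and similarly $\cE^k(a)=E(a)\setminus \bigl(E^1(a)\cup\cdots\cup E^k(a)\bigr)$ at later levels. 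The monomial parts $\cM(\ucI^k)$ are taken with respect to these generally nonempty divisors, and the resulting invariants $\mu_{H,k+1}(a)$ are \emph{not} automatically zero: if they were, condition (3) of Theorem \ref{thm:char} would be vacuous and the entire cleaning procedure of Section \ref{sec:clean} would be unnecessary.

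Consequently, the vanishing of the monomial parts is precisely what the stable-snc hypothesis must be used to prove, and this is how the paper's proof goes: at a stable-snc point the generators of $\cI^1$ (the $f_{k,l}|_{N^1}$, $k\geq 2$, together with the generators of the first block only) are part of a regular coordinate system, and none of them defines a component of $\cE^1(a)$; hence $\ord_{H,a}\cI^k=0$ for every $H\in\cE^k(a)$, i.e.\ $\cM(\ucI^k)=1$ and all $\mu_{H,k+1}(a)=0$, at every level of the descent. Your final paragraph gestures at this (``never \ldots triggering a monomial phase,'' following \cite{BDV}), but defers it as ``bookkeeping,'' while the body of your argument replaces it with the incorrect assertion that the associated divisors are empty. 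A secondary consequence of the same error: by feeding all of $E(a)$ into the ideal at level $1$ you compute a year-zero-style invariant of shape $(m,s(a),1,0,\ldots,1,0,\infty)$, whereas the recursively defined $\inv_\ka(a)$ distributes the components of $E(a)$ into the blocks $E^1(a),E^2(a),\ldots$, which enter the ideals as boundary at successive levels and produce the general shape $(m,s_1,1,s_2,\ldots,1,s_d,1,0,\ldots,1,0,\infty)$ of Definition \ref{def:specialinv}. So the proof as written has a genuine gap; fixing it amounts to replacing the ``$\cE=0$'' claim by the transversality argument above, which is the paper's proof.
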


\begin{proof}
Suppose that $X$ has
$m$ (smooth) irreducible components $X^{(1)},\ldots,X^{(m)}$ at $a$ (so that 
$\ka_X(a) = m$), of codimensions $c_1,\ldots,c_m$,
respectively, in a local minimal embedding variety $Z$ of $X$ at $a$.
Assume (without loss of generality) that 
$c_1=\max\{c_i\}$.
Let $I^{(k)}$ denote the ideal of $X^{(k)}$ in $\cO_Z$ at $a$, $k=1,\ldots,m$. 
As in \S2.2, $\inv_\ka(a) = (\ka(a), s_1(a), \inv_{\ucI^1}(a))$,
where $\ucI^1 = \sum \ucI^{(i)}|_{N^1} +$ boundary and $N^1=X^{(1)}$. 

Let $f_{k,l}$, $l=1,\ldots,c_k$, denote generators of the ideal $I^{(k)}$ (with linearly independent gradients), $k=1,\ldots,m$.
Let $u_1^j$, $j=1,\ldots, s_1(a)$, denote generators of the ideals of the components
of $E^1(a)$. Then
\begin{equation}\label{eq:coeff}
\ucI^1 = (Z,\, N^1=X^{(1)},\, \cE^1(a),\, \cI^1 = (\{f_{k,l}:\, k\geq 2\},\, \{u_1^j\})|_{N^1},\, 1),
\end{equation}
where $\cE^1(a) = E(a)\setminus E^1(a)$ and $E(a)$ denotes the set of
components of $E$ at $a$.
The argument is now very similar to the proof of \cite[Lemma 3.1]{BDV}.

We factor $\ucI^1$ as the product $\cM(\ucI^1)\cdot\cR(\ucI^1)$ of its monomial
and residual parts; in particular, $\cM(\ucI^1)$ is generated by a monomial $m_1$
in the components of $\cE^1(a)$.

Since $(X,E)$ is stable-snc at $a$,
the generators of $\cI^1$ in \eqref{eq:coeff} are part
of a regular coordinate system. It follows that $\cM(\ucI^1) = 1$ (since none of
these generators define elements of $\cE^1(a)$); i.e., all
$\mu_{H,2}(a) = 0$. Since $\ucI^1$ has maximal order, $(\inv_\ka)_{3/2}(a) = (m,s_1,1)$, and
the companion ideal $\ucJ^1 = \ucI^1$.

We can continue the computation of $\inv_\ka$,
choosing the $f_{k,l}$ and the $u_{i}^{j}$ successively as hypersurfaces of maximal contact
to pass to the coefficient ideal plus boundary
$\underline{\mathcal{I}}^p$, $p = 2,\ldots$\,.
At each step, $\cM(\ucI^p) = 1$ (in particular, $\mu_{H,p}(a)=0$ for every $H$),
and $\underline{\mathcal{I}}^p$ is of maximal order, $=1$.
Therefore, $\nu_{p +1}=1$ and
$\underline{\mathcal{I}}^p$ equals the following companion ideal
$\underline{\mathcal{J}}^p$.
Once all $f_{k,l}$ and $u_{i}^{j}$ have been used as hypersurfaces of maximal contact, we
get coefficient ideal $= 0$. Therefore, $\inv_\ka(a)$ has last entry $=\infty$ and $r$ pairs
before $\infty$.
\end{proof}

\begin{lemma}\label{lem:norm} \emph{(Compare with \cite[Lemma 3.3]{BDV}.)}
Again consider $(X,E) = (X_q,E_q)$, in some year $q$ of a history 
\eqref{eq:blupseq} of $\inv_\ka$-admissible blowings-up, and let $a \in X$. Assume that $X$ has
$m$ irreducible components $X^{(1)},\ldots,X^{(m)}$ at $a$ (all smooth).
Let $f_h,\, h=1,\ldots,p$, denote generators of the ideal of $\cap_{k=1}^{m} X^{(k)}$ in $\cO_N$ at $a$, where $N$ is a component $X^{(k)}$ of smallest dimension
(say $N = X^{(1)}$, without loss of generality). 
Let $u_{i}^{j},\, j = 1,\ldots, s_i$, denote generators of the ideals of the elements of
$E^i(a)|_N,\, i=1,\ldots, d$, and write $s = (s_1,\ldots,s_d)$.

Assume that $\inv_\ka(a)=\inv_{c,s}$, with $c=(c_1,\ldots,c_m)$. Set $r:=|c|+|s|-\max\{c_i\}$. 
Then there is an injection $\{1,\ldots,r\} \to \{f_h, u_{i}^{j}\}$, which
we denote  $l\mapsto g_l$, and a regular system of coordinates $(x_1,\ldots,x_n)$ for $N$ at
$a$ ($n\geq r$), such that
\begin{equation}\label{eq:localform}
g_l=\xi_l+x_l \cdot\prod_{i=1}^{l-1}m_{i},\quad l=1,\ldots,r,
\end{equation}
where each $\xi_l$ is in the ideal generated by $(x_1,\ldots, x_{l-1})$ and each $m_{i}$ is a monomial in generators of the ideals of the elements $H$ of
$\mathcal{E}^i(a)= E(a)\setminus E^1(a)\cup ...\cup E^i(a)$, each raised to the power
$\mu_{H,i+1}(a)$.
\end{lemma}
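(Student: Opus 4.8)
The goal is to extract the normal-form (\ref{eq:localform}) from the hypothesis $\inv_\ka(a) = \inv_{c,s}$. This is a statement about the \emph{internal structure} of the desingularization invariant, so the approach is to unwind the iterative construction of $\inv_\ka$ through the marked ideals $\ucI^k = (Z, N^k, \cE^k, \cI^k, d^k)$, reading off at each level $k$ a single generator $g_l$ among the $\{f_h, u_i^j\}$ that serves as a hypersurface of maximal contact and records the value of $\nu_{k+1}(a)$ and $\mu_{H,k+1}(a)$. Since the invariant is assumed to have the special shape $\inv_{c,s}$, with exactly $r = |c| + |s| - \max\{c_i\}$ pairs before $\infty$ and every $\nu$-entry equal to $1$, the construction must terminate after precisely $r$ successful maximal-contact choices, which is what produces the injection $l \mapsto g_l$ of $\{1,\ldots,r\}$ into the generating set.

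First I would set $N = N^1 = X^{(1)}$ (a component of smallest dimension) and write out $\cI^1$ explicitly, as in (\ref{eq:coeff}) of Lemma \ref{lem:snc}, as the restriction to $N$ of the ideal generated by the $f_{k,l}$ (for $k \geq 2$) together with the boundary generators $u_i^j$. The key point is that these $f_{k,l}$, restricted to $N = X^{(1)}$, generate the ideal of $\bigcap_{k} X^{(k)}$ inside $\cO_N$, so they coincide (up to the obvious identification) with the $f_h$ of the statement. Then I would proceed by induction on the level $k = 1, \ldots, r$: at each stage I factor $\cI^k = \cM(\ucI^k)\cdot\cR(\ucI^k)$, observe that $\nu_{k+1}(a) = \ord_a \cR(\ucI^k)/d^k = 1$ forces $\cR(\ucI^k)$ to have order exactly $d^k$, select a generator $g_l \in \cR(\ucI^k)$ realizing this order as a hypersurface of maximal contact, and pass to the companion ideal and then the coefficient-ideal-plus-boundary $\ucI^{k+1}$, which lives on $N^{k+1}$ of codimension one in $N^k$. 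The monomial $\prod_{i=1}^{l-1} m_i$ in (\ref{eq:localform}) accumulates the monomial factors $\cM(\ucI^i)$ stripped off at the earlier levels, with each $H$ appearing to the power $\mu_{H,i+1}(a)$ by definition of the monomial part; the correction term $\xi_l \in (x_1,\ldots,x_{l-1})$ arises because passing to the coefficient ideal on $N^{k+1}$ amounts to working modulo the previously chosen maximal-contact functions, which can be arranged to be coordinates $x_1,\ldots,x_{l-1}$.

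The main obstacle I expect is bookkeeping the interaction between the monomial parts and the coefficient-ideal construction across levels: I must verify that the generator $g_l$ selected at level $k$ really has the precise form $\xi_l + x_l \prod_{i<l} m_i$, i.e. that after dividing out the monomial $\cM(\ucI^k) = \prod_{i<l} m_i$ the residual generator contributes a clean coordinate $x_l$ modulo $(x_1,\ldots,x_{l-1})$, and that the exponents match $\mu_{H,i+1}(a)$ exactly rather than merely up to a bounded error. This is exactly the delicate part of the argument in \cite[Lemma 3.3]{BDV}, and the cleanest route is to follow that proof closely, using here the simplification afforded by the hypothesis that all components $X^{(i)}$ are smooth (so that the $f_h$ genuinely cut out a smooth intersection locally and can be completed to a coordinate system). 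The secondary technical point is confirming that the count of levels is exactly $r$: the drop $-\max\{c_i\} = -c_1$ reflects that the $c_1$ generators $f_{1,l}$ of $X^{(1)}$ were used up in choosing $N = X^{(1)}$ itself and hence never reappear as maximal-contact functions, so the injection has domain of size $|c| + |s| - c_1 = r$, matching Definition \ref{def:specialinv}.
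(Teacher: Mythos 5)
Your proposal is correct and follows essentially the same route as the paper's own proof: the paper likewise sets $N^1 = X^{(1)}$, writes $\ucI^1 = (Z,\,N^1,\,\cE^1(a),\,(\{f_h\},\{u_1^j\}),\,1)$, uses the entry $\nu_2(a)=1$ of $\inv_{c,s}$ to produce $g_1$ with $(m_1)^{-1}g_1|_{N^1} \in \cR(\ucI^1)$ of order one, takes $N^2=(x_1=0)$ as the next maximal contact subspace, forms the coefficient ideal plus boundary $\bigl(\cR(\ucI^1)+(u_2^1,\ldots,u_2^{s_2})\bigr)|_{N^2}$, and then iterates exactly as you describe, citing \cite[Sect.\,3]{BDV}, with termination after $r$ steps. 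Your explanation of the count $r=|c|+|s|-\max\{c_i\}$ (the generators of $X^{(1)}$ being absorbed into the choice of $N$) and of the accumulated monomial factors with exponents $\mu_{H,i+1}(a)$ is the same bookkeeping the paper relies on.
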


\begin{remark}\label{rem:norm}
Suppose that the irreducible components $X^{(k)}$ of $X$ have codimensions $c_k$ in a minimal
embedding variety for $X$ at $a$. Then we can take $\{f_h\} := \{f_{k,j}|_N\}_{k\geq 2}$, where the 
$f_{k,j}$, $l=1,\ldots,c_k$ 
denote local generators of the ideal $I^{(k)}$ of $X^{(k)}$ in a minimal embedding variety for $X$ at $a$.
In this case, the mapping $l\mapsto g_l$ of the lemma is bijective.
\end{remark}

\begin{proof}[Proof of Lemma \ref{lem:norm}]
As in the proof of Lemma \ref{lem:snc}, $\inv_\ka(a) = (\ka(a), s_1(a), \inv_{\ucI^1}(a))$,
where $\ucI^1 = \sum \ucI^{(i)}|_{N^1} +$ boundary, $N^1 = N$ and 
$\ucI^1 = (Z,\, N^1,\, \cE^1(a),\, \cI^1 = (\{f_h\},\, \{u_1^j\}),\, 1)$ (with $Z$ a local embedding variety).
If $(\inv_\ka)_{3/2}(a) = (m,s_1,1)$, then there exists $g_1\in
\{f_h\} \cup \{u_1^j\}$ such that $x_1 := (m_1)^{-1}\cdot g_1|_{N^1}\in \mathcal{R}(\underline{\mathcal{I}}^1)$ has order 1 
at $a$, and the 
companion ideal $\ucJ^1 = (Z,N^1,\cE^1(a),\cR(\ucI^1),1)$. We can take $N^2 :=
(x_1=0) \subset N^1$ as the next maximal contact subspace. Then the coefficient ideal plus boundary is
\begin{equation*}
\ucI^2 = \left(Z, N^2, \cE^2(a)=\cE^1(a)\setminus E^2(a),
\left(\cR(\ucI^1) + (u_2^1,\ldots,u_2^{s_2})\right)|_{N^2}, 1\right).
\end{equation*}
We can again repeat the argument, as in \cite[Sect.\,3]{BDV}, and the process ends after $r$ steps. 
\end{proof}

\begin{remark}\label{rem:mult1}
In the proof above, we see that, if the truncated invariant
$(\inv_\ka)_{k+1/2}(a)\allowbreak =(\inv_{c,s})_{k+1/2}$, where $0\leq k<r=|c|+|s|-c_1$, then, for every
$p \leq k+1$, the coefficient ideal plus boundary $\underline{\mathcal{I}}^p$ (or an equivalent marked ideal) has associated multiplicity $=1$. Comparing with \cite[Remark 3.6]{BDV}, note that a condition analogous to ``$a \in \Sigma_p$'' in
the latter is not needed here because we are assuming all irreducible components of $X$ at $a$ are smooth.
\end{remark}

\begin{theorem}[Characterization of stable-snc]\label{thm:char}
Consider $(X,E) = (X_q,E_q)$, in some year $q$ of a history 
\eqref{eq:blupseq} of $\inv_\ka$-admissible blowings-up.
Let $a \in X$, and let $e = e_{X,a}$. Assume that the irreducible components, $X^{(k)}$,
$k = 1,\ldots, m = \ka(a)$ of $X$ at $a$ are smooth and of dimensions $e-c_k$, respectively. Then $(X,E)$ is stable-snc at $a$ if and only if
\begin{enumerate}
\item $a\in\Sigma_{\Omega}(X)$, where  $\Omega=(e,c_1,\ldots,c_m)$;
\item $\pinv(a)=\inv_{c,s}$, for some $s=(s_1,\ldots,s_d)$;
\item $\mu_{H,i+1}(a) = 0$, for all $i\geq 1$ and all $H \in \cE^i(a)$.
\end{enumerate}
\end{theorem}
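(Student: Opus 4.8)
The plan is to prove Theorem \ref{thm:char} by combining the geometric characterization \eqref{eq:codimstablesnc} with the invariant computations already carried out in Lemmas \ref{lem:snc} and \ref{lem:norm}. The forward direction (stable-snc $\Rightarrow$ (1)--(3)) is essentially Lemma \ref{lem:snc}: if $(X,E)$ is stable-snc at $a$, then in a minimal embedding variety the components $X^{(k)}$ are coordinate subspaces transverse to the snc divisor, so $a \in \Sigma_\Omega(X)$ with the $c_k$ equal to the codimensions, condition (2) is exactly the conclusion $\inv_\ka(a) = \inv_{c,s}$ of Lemma \ref{lem:snc}, and condition (3) follows from the observation made there that $\cM(\ucI^p) = 1$ at every step (i.e. all $\mu_{H,i+1}(a) = 0$), since the generators of the components and of $E$ are part of a single regular coordinate system and no component generator lies in $\cE^i(a)$.

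For the converse, I would assume (1)--(3) and use Lemma \ref{lem:norm} to extract a local normal form. By (2) and Remark \ref{rem:norm}, since $a \in \Sigma_\Omega$, the map $l \mapsto g_l$ is bijective onto $\{f_{k,j}|_N\}_{k \geq 2} \cup \{u_i^j\}$, and Lemma \ref{lem:norm} produces coordinates $(x_1,\ldots,x_n)$ on $N = X^{(1)}$ in which each $g_l = \xi_l + x_l \cdot \prod_{i=1}^{l-1} m_i$ with $\xi_l \in (x_1,\ldots,x_{l-1})$. Condition (3) forces every monomial factor $m_i = 1$, so the normal form collapses to $g_l \equiv x_l \pmod{(x_1,\ldots,x_{l-1})}$; hence the $g_l$ themselves form part of a regular coordinate system on $N$. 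Pulling these back through the minimal embedding variety $Z$, the generators $\{f_{k,j}\}_{k\geq 2}$ of the ideals of the $X^{(k)}$ (together with the coordinates cutting out $N = X^{(1)}$) are jointly part of a regular coordinate system on $Z$, and the components of $E$ are coordinate hypersurfaces transverse to all of them. This is precisely the defining condition of stable-snc from Definition \ref{def:stablesnc} (equivalently condition (2) of Lemma \ref{lem:stablesnc} for $X$, extended to $D+E$), so $(X,E)$ is stable-snc at $a$.

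The step I expect to be the main obstacle is the bookkeeping that guarantees the normal-form generators $g_l$ actually \emph{separate} the distinct components rather than merely cutting out their common intersection. Lemma \ref{lem:norm} is stated for generators $f_h$ of $\cap_{k=1}^m X^{(k)}$, and one must verify that, under (1) and the bijectivity in Remark \ref{rem:norm}, the $f_{k,j}$ belonging to different components $X^{(k)}$ map to coordinates that are linearly independent \emph{across} components, i.e. that transversality of the full collection (not just pairwise, and not just of the intersection) is recovered. Here the codimension count \eqref{eq:codimstablesnc} built into the definition of $\Sigma_\Omega$ does the essential work: the equality $r = |c| + |s| - \max\{c_i\}$ together with $\dim N = e - c_1$ and the bijectivity of $l \mapsto g_l$ means there are exactly enough independent coordinates to realize $\sum_k c_k$ as the codimension of $\bigcap_k X^{(k)}$ in $Z$, which is the numerical content of general position. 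I would therefore track dimensions carefully at the point where the $g_l$ are lifted from $N$ back to $Z$, confirming that no relations among the $f_{k,j}$ across different $k$ can survive, and invoke equivalence of this with condition (4) of Lemma \ref{lem:stablesnc} via \eqref{eq:codimstablesnc}.
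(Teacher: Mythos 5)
Your proposal is correct and follows essentially the same route as the paper's proof: the forward direction is read off from Lemma \ref{lem:snc} (whose computation already gives all $\mu_{H,p}(a)=0$, hence condition (3)), and the converse applies Lemma \ref{lem:norm}, with condition (3) forcing all monomials $m_i=1$ in \eqref{eq:localform}, so that the scheme-theoretic intersection of the components of $X$ and $E$ is smooth and the codimension count \eqref{eq:codimstablesnc} holds. The transversality bookkeeping you flag as the main obstacle is exactly what the paper leaves implicit in its one-line appeal to Lemma \ref{lem:norm}, and your dimension count (using the bijectivity of $l\mapsto g_l$ from Remark \ref{rem:norm} and $r=|c|+|s|-\max\{c_i\}$) resolves it correctly.
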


\begin{proof}
``Only if'' is immediate from Lemma \ref{lem:snc}. On the other hand, assume conditions (1), (2) and (3). By (3),
\eqref{eq:localform} holds with all $m_i=1$. Then, by Lemma \ref{lem:norm}, the scheme-theoretic intersection of the components of $X$ and $E$ at $a$ is smooth, and \eqref{eq:codimstablesnc} holds. So 
$(X,E)$ is stable-snc.
\end{proof}


\section{Cleaning}\label{sec:clean} We recall the cleaning technique introduced in \cite{BMmin} and developed
in \cite[Section 4]{BDV} under conditions that also apply here (in fact, in a more straightforward way).

Assume that all irreducible components of $X$ are smooth.
According to Theorem \ref{thm:char}, if $a\in\Sigma_{\Omega}(X)$ and $\inv_\ka(a)=\inv_{c,s}$,
then $(X,E)$ is stable-snc at $a$ if and only if the invariants $\mu_{H,k+1}(a)=0$, for every
$k\geq1$. In this section we study the \emph{cleaning} blowings-up used to get the latter condition.

Cleaning blowings-up are not necessarily $\inv_\ka$-admissible. In the general cleaning algorithm of \cite[Sect.\,2]{BMmin}, therefore, the invariant $\inv = \inv_X$ that is used is not defined
in a natural way over a cleaning sequence, so that, after cleaning, we assume
we are in year zero for the definition of the invariant. Over the particular cleaning
sequences needed here, however, we can define a modified $\inv_\ka$ which
remains upper semicontinuous and infinitesimally upper semicontinuous, and show that maximal
contact subspaces exist in every codimension involved; this is a
consequence of Lemma \ref{lem:norm} and Remark \ref{rem:mult1} (see Remarks
\ref{rem:clean}).

Consider a point $a$ in the locus $S := ((\inv_\ka)_k = (\inv_{c,s})_k)$ for the 
truncated invariant, where  $k\geq 1$
(in some year $q$ of a history 
\eqref{eq:blupseq} of $\inv_\ka$-admissible blowings-up). In some neighbourhood of $a$,
$S$ is the cosupport of a marked ideal (a coefficient ideal
plus boundary) $\underline{\mathcal{I}}^k = (\mathcal{I}^k,d^k)
=(Z,N^k,\mathcal{E}^k(a),\mathcal{I}^k,d^k)$, where $N^k$ is a maximal contact subspace
of codimension $k-1$ in $N^1$ and $d^k=1$ (see Remark \ref{rem:mult1}). Recall that
$\mathcal{E}^k(a)=E(a)\setminus E^1(a)\cup ...\cup E^{k}(a)$, where
the block $E^{k}(a)$ defines the boundary.

The ideal $\mathcal{I}^k = \mathcal{M}(\underline{\mathcal{I}}^k )\cdot\mathcal{R}(\underline{\mathcal{I}}^k )$ (the product of its monomial and residual parts). The
monomial part $\mathcal{M}(\underline{\mathcal{I}}^k )$ is the product of the ideals
$\mathcal{I}_H|_{N^k}$ (where $H \in \cE^k(a)$), each to the power $\mu_{H,k+1}(a)$
(since $d^k=1$).

Let $\underline{\mathcal{M}}(\underline{\mathcal{I}}^k)$ denote the monomial
marked ideal $(\mathcal{M}(\underline{\mathcal{I}}^k), d^k)
= (\mathcal{M}(\underline{\mathcal{I}}^k), 1)$. Then $\cosupp \underline{\mathcal{M}}(\underline{\mathcal{I}}^k) \subset \cosupp \ucI^k$ and any admissible sequence of
blowings up of $\underline{\mathcal{M}}(\underline{\mathcal{I}}^k)$ is admissible
for $\ucI^k$. 

\begin{definition}\label{def:clean}
\emph{Cleaning} of the locus $S = ((\inv_\ka)_k = (\inv_{c,s})_k)$ means the sequence
of blowings-up obtained from desingularization of the monomial marked ideal
$\underline{\mathcal{M}}(\underline{\mathcal{I}}^k)$ (in a neighbourhood of
any point of $S$) \cite[Sect.\,5, Step II, Case A]
{BMfunct}, \cite[Sect.\,2]{BMmin}.
\end{definition}

The centres of the cleaning blowings-up are invariantly
defined closed subspaces of $((\inv_\ka)_k \geq (\inv_{c,s})_k)$. Definition \ref{def:clean} is simpler
than the analogous definition \cite[Def.\,4.2]{BDV} because of our assumption that all components of
$X$ are smooth.

\begin{remarks}\label{rem:clean}
The blowings-up $\s$ involved in  desingularization of $\underline{\mathcal{M}}(\underline{\mathcal{I}}^k)$ are 
$(\inv_\ka)_k$-admissible: Let $C$ denote the centre of $\s$.
Then $C$ is snc with respect to $E$
because, in the notation above, $C$ lies inside every element of $E^1(a) \cup \cdots E^k(a)$ and $C$ is snc with respect to $\cE^k(a)$. Since $C \subset S$, it follows
that $\s$ is $(\inv_\ka)_k$-admissible. By Lemma \ref{lem:snc}, $C$
contains no stable-snc points (since some $\mu_{H,k+1}(a) \neq 0$, for all $a \in C$).

Since $d_k=1$, $C$ is of the form $N^k \cap H$, for a single $H \in \cE^k(a)$;
i.e., $C$ is of codimension $1$ in $N^k$. Therefore, $\s$ induces an isomorphism
$(N^k)' \to N^k$, where $(N^k)'$ denotes the strict transform of $N^k$.
\end{remarks}

\begin{lemma}\label{lem:clean}
Assume that $\inv_\ka \leq \inv_{c,s}$ on $X = X_q$, in some year $q$ of a history 
\eqref{eq:blupseq} of $\inv_\ka$-admissible blowings-up. Consider the cleaning sequence for
$(\pinv_k = (\inv_{c,s})_k)$ (Definition \ref{def:clean}). Then, over the cleaning
sequence, we can define maximal contact subspaces of every codimension involved,
as well as (a modification of) $\inv_\ka$ which remains both semicontinuous and infinitesimally semicontinuous.
\end{lemma}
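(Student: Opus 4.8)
The plan is to exploit the very explicit structure of the cleaning centres recorded in Remarks \ref{rem:clean}, together with the local normal form of Lemma \ref{lem:norm}. By Definition \ref{def:clean} the cleaning sequence is the canonical desingularization of the monomial marked ideal $\underline{\cM}(\ucI^k)$, and by Remarks \ref{rem:clean} each of its centres is of the form $N^k \cap H$ for a single $H \in \cE^k(a)$, hence of codimension $1$ in $N^k$, with strict transform $(N^k)'$ mapping isomorphically to $N^k$. Consequently the maximal contact tower $N^1 \supset \cdots \supset N^k$ persists, with unchanged dimensions, over the entire cleaning sequence; and since these blowings-up are $(\inv_\ka)_k$-admissible, the truncation $(\inv_\ka)_k$ is transported over the cleaning sequence and stays equal to $(\inv_{c,s})_k$ on (the successive transforms of) $S$, where it is trivially upper semicontinuous.

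First I would construct the maximal contact subspaces below $N^k$. For this I invoke Lemma \ref{lem:norm} and Remark \ref{rem:mult1}: on $S = (\pinv_k = (\inv_{c,s})_k)$ we have the normal form \eqref{eq:localform}, and every coefficient ideal plus boundary $\ucI^p$ that enters the computation has associated multiplicity $d^p = 1$, with residual part $\cR(\ucI^p)$ of order exactly $1$. Thus the residual generators $g_l$ (drawn from $\{f_h, u_i^j\}$) serve as hypersurfaces of maximal contact independently of the monomial factors $m_i$, and the successive maximal contact subspaces may be taken to be the coordinate subspaces $N^{p+1} = (x_p = 0) \cap N^p$, $p \geq k$. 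Because the cleaning centres lie inside $\cE^k(a)$ and are monomial, blowing them up modifies only the monomials $m_i$ (which absorb the new exceptional divisor) while leaving the residual coordinates $x_l$ — and hence the subspaces $N^p$ — intact, up to the isomorphism $(N^k)' \cong N^k$. This yields maximal contact subspaces of every codimension involved, with canonical dimensions, over the whole cleaning sequence.

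With the tower in hand, I would define the modified invariant by running the usual iterative marked-ideal construction, but computing the residual orders and the block counts $s_i$ relative to these persistent subspaces and incorporating the cleaning-exceptional divisors into the boundary in the standard way, while tracking the monomial multiplicities $\mu_{H,k+1}(a)$ as the only genuinely varying data. Since the residual parts have order $1$ throughout (Remark \ref{rem:mult1}), the tail of the invariant is determined by the equivalence classes of the $\ucI^p$ and by the now-canonical $\dim N^p$; this produces a well-defined, upper semicontinuous modification of $\inv_\ka$. For infinitesimal upper semicontinuity, I would argue that each cleaning blowing-up strictly reduces $\underline{\cM}(\ucI^k)$ — which is exactly the effect of desingularizing it — so the multiplicities $\mu_{H,k+1}$ do not increase and eventually drop, while the residual data are unchanged; hence the modified invariant does not increase under the morphism.

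The main obstacle is the point just raised: the ordinary $\inv_\ka$ is \emph{not} defined over the cleaning sequence, precisely because the cleaning fails to be $\inv_\ka$-admissible — on a cleaning centre $C$ only $(\inv_\ka)_k$, and not the full invariant, is locally constant, since by Lemma \ref{lem:snc} some $\mu_{H,k+1} \neq 0$ at every point of $C$, making the tail vary. The general existence-and-semicontinuity theory for maximal contact therefore does not apply off the shelf. The entire argument rests on the rigidity supplied by Lemma \ref{lem:norm} and Remark \ref{rem:mult1}: because every $d^p = 1$ and the relevant residual parts have order exactly $1$, the maximal contact subspaces are forced to be coordinate subspaces that survive the monomial cleaning blowings-up, leaving the monomial multiplicity as the only moving part — and that is what cleaning is designed to reduce. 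The remaining work is the bookkeeping of how the blocks $E^i(a)$ and the multiplicities $\mu_{H,k+1}$ transform under each codimension-$1$ monomial blowing-up.
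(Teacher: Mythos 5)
Your proposal is correct and follows essentially the same route as the paper: the paper's proof is a citation to \cite[Lemma 3.20]{BDV}, whose argument is precisely the one you reconstruct --- using the codimension-one, $(\inv_\ka)_k$-admissible cleaning centres of Remarks \ref{rem:clean} and the multiplicity-one normal forms of Lemma \ref{lem:norm} and Remark \ref{rem:mult1} to show the maximal contact tower persists (up to the isomorphism $(N^k)' \cong N^k$) and that the monomial multiplicities are the only varying data. Your treatment fills in the details the paper delegates to the reference, and no step of it conflicts with the intended argument.
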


The proof is the same as that of \cite[Lemma 3.20]{BDV} (changing $\inv$ to $\inv_\ka$).

\begin{remark}\label{rem:why}
After cleaning the loci $((\inv_\ka)_k = (\inv_{c,s})_k)$, for all $k$, we will apply further blowings-up to make 
$(X,E)$ stable-snc on $(\inv_\ka = \inv_{c,s})$ (see Section \ref{sec:algDEqualsZero}, Step 3). We will then 
continue to blow up with closed centres which lie in
the complement of the stable-snc locus
$\{\text{stable-snc}\}$ (Section \ref{sec:algDEqualsZero}). The purpose of defining
$\inv_\ka$ over the cleaning sequences is to ensure that, in the complement
of $\{\text{stable-snc}\}$, we will only have to consider values $\inv_{c',s'} < \inv_{c,s}$
in order to resolve all but $\{\text{stable-snc}\}$ after finitely many steps. If,
after cleaning $(\inv_\ka = \inv_{c,s})$, we were to apply the resolution algorithm in the
complement of $\{\text{stable-snc}\}$, beginning as if in year zero, we might introduce
points where $\inv_\ka = \inv_{c',s'} > \inv_{c,s}$.
\end{remark}


\section{Desingularization of a variety preserving stable-snc singularities}
\label{sec:algDEqualsZero}

The purpose of this section is to give an algorithm for our main theorem in the case that $D=0$.
We prove the following result.

\begin{theorem}\label{thm:mainDEqualZero}
Let $X$ denote a reduced algebraic variety and let $E$ be an snc divisor on $X$. Then there is a 
sequence of admissible smooth blowings-up \eqref{eq:blupseq}, such that
\begin{enumerate}
\item $(X_t, E_t)$ has only stable-snc singularities;
\item each blowing-up $\sigma_{j+1}$ is an isomorphism over the locus of stable-snc points of
$(X_j, E_j)$.
\end{enumerate}
\end{theorem}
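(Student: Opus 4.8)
The plan is to build the desingularization sequence by combining the three tools assembled so far: Theorem \ref{thm:SimDes} (simultaneous desingularization of components), the characterization of stable-snc via $\inv_\ka$ (Theorem \ref{thm:char}), and the cleaning machinery (Section \ref{sec:clean}). The overall strategy is to first reduce to the case in which all irreducible components of $X$ are smooth, and then to drive down the invariant $\inv_\ka$ to its stable-snc values $\inv_{c,s}$, clearing away the monomial obstructions by cleaning, while never blowing up a point that is already stable-snc.

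\smallskip
\noindent
\textbf{Step 1 (make all components smooth).} First I would apply Theorem \ref{thm:SimDes} to $(X,E)$, obtaining an admissible sequence after which every irreducible component of $X_t$ is smooth, with each blowing-up an isomorphism over the locus where all components are already smooth. Since a stable-snc point of $(X,E)$ in particular has all components smooth, these blowings-up are isomorphisms over $\{\text{stable-snc}\}$, so condition (2) is preserved. From here on I may assume, as throughout Sections \ref{sec:char}--\ref{sec:clean}, that all components of $X$ are smooth, and the invariant $\inv_\ka$ is defined and globally semicontinuous (Remark \ref{rem:kinv}).

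\smallskip
\noindent
\textbf{Step 2 (descending induction on $\inv_\ka$ and cleaning).} The characterization Theorem \ref{thm:char} tells me that $(X,E)$ is stable-snc at $a$ exactly when $a \in \Sigma_\Omega$, $\inv_\ka(a) = \inv_{c,s}$, and all $\mu_{H,i+1}(a)=0$. So the non-stable-snc locus is detected either by $\inv_\ka(a)$ failing to have the special form $\inv_{c,s}$, or by a surviving monomial part. The plan is to process the finitely many relevant values of $\inv_{c,s}$ in decreasing order. At the top value, I would run the $\inv_\ka$-based desingularization algorithm on $\{\inv_\ka \geq \inv_{c,s}\}$: since the algorithm blows up the maximum locus of $\inv_\ka$, and a stable-snc point realizes $\inv_{c,s}$ (Lemma \ref{lem:snc}), these centres avoid $\{\text{stable-snc}\}$, giving condition (2). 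This lowers the invariant until $\inv_\ka \le \inv_{c,s}$ on the remaining locus. At that point I would invoke cleaning (Definition \ref{def:clean}, Lemma \ref{lem:clean}): I clean the loci $((\inv_\ka)_k = (\inv_{c,s})_k)$ for each $k$ to force the monomial parts $\mu_{H,k+1}$ to vanish. By Remarks \ref{rem:clean} the cleaning blowings-up are $(\inv_\ka)_k$-admissible and contain no stable-snc points, so again condition (2) holds. The purpose of defining the modified $\inv_\ka$ over the cleaning sequences (Lemma \ref{lem:clean}, Remark \ref{rem:why}) is precisely to guarantee that afterwards only strictly smaller values $\inv_{c',s'} < \inv_{c,s}$ appear in the complement of the stable-snc locus, so the descending induction is well-founded and terminates.

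\smallskip
\noindent
\textbf{Step 3 and the main obstacle.} After cleaning has achieved $\inv_\ka = \inv_{c,s}$ with all $\mu_{H,i+1}=0$ on the relevant stratum, Theorem \ref{thm:char} says the points of $\Sigma_\Omega$ there are already stable-snc; the residual work is to separate components that meet non-transversally (i.e., where the scheme-theoretic intersection $\bigcap X^{(k)}$ fails to be smooth, so $a \notin \Sigma_\Omega$) by further blowings-up along that singular intersection locus, which by construction lies outside $\{\text{stable-snc}\}$. I expect the main obstacle to be the bookkeeping that makes the descending induction genuinely terminate: I must verify, using the infinitesimal semicontinuity of the modified $\inv_\ka$ over cleaning sequences (Lemma \ref{lem:clean}) together with property (3) of $\io$ in Section \ref{sec:inv}, that each round strictly decreases the pair $(\inv_{c,s}, \text{something})$ in the complement of the stable-snc locus and that no larger values $\inv_{c',s'} > \inv_{c,s}$ are reintroduced — this is the content of Remark \ref{rem:why} and is where the careful choice of $N$ as a minimal-dimension component (Remark \ref{rem:kinv}) and the structure of \eqref{eq:localform} must be used to control how blowings-up transform the codimension data $(c,s)$.
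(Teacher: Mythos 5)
Your Steps 1 and 2 track the paper's proof closely: Theorem \ref{thm:SimDes} first, then the $\inv_\ka$-algorithm combined with cleaning, with Remark \ref{rem:why} supplying termination. But your Step 3 contains a genuine gap, and it is exactly the step where the paper does something you did not anticipate. After cleaning, conditions (2) and (3) of Theorem \ref{thm:char} hold on the locus $T = (\inv_\ka = \inv_{c,s})$, and the residual obstruction is indeed condition (1), $a \in \Sigma_\Omega$. You diagnose this failure as ``the scheme-theoretic intersection $\bigcap X^{(k)}$ fails to be smooth'' and propose to blow up that singular intersection locus. This is not the right diagnosis: Examples \ref{ex:invnotdeterminingstablesncX}(1) gives $Y=(x=yz=0)\cup(z=0)$ in $\mathbb{A}^3$, where $\inv_\ka(0)=\inv_{c,s}$ with $c=(1,1,1)$, $s=0$, all $\mu_{H,i+1}=0$ (there is no boundary), and the scheme-theoretic intersection of the three components is the reduced origin, which is smooth --- yet $Y$ is not stable-snc at $0$, because the true codimensions are $(2,2,1)$ and the identity \eqref{eq:codimstablesnc} fails. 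Your proposed centre is empty in this example, so your Step 3 repairs nothing. The actual problem, as the paper explains between its Steps 2 and 3, is that the minimal embedding dimension $e_X$ (equivalently $H_X$) need not be constant along the irreducible components of $T$, so condition (1) can fail at special points even though $\inv_\ka$ is constant. The paper's Step 3 fixes this by a second application of the simultaneous-desingularization technique of \S\ref{sec:SimDes}: it runs the algorithm for the invariant $\io = (H_X, H_T)$, ordered lexicographically, with centres inside $T$, until $H_X$ (hence $e_{X,a}$) is constant on every component of $T$; only then does each component of $T$ lie in a single stratum $\Sigma_\Omega$, so that Theorem \ref{thm:char} applies along all of $T$. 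This mechanism is absent from your proposal and cannot be replaced by blowing up singular intersections.

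Two further omissions compound this. First, in the paper's Step 2, before cleaning one blows up every irreducible component of the maximum locus $(\inv_\ka = \tau)$ that contains \emph{no} stable-snc points, so that afterwards $T$ is generically stable-snc on each component; without this, nowhere-stable-snc components (such as $T=\{0\}$ in the example above) survive cleaning, satisfy (2) and (3), never satisfy (1), and nothing in your procedure ever removes them. Second, your justification that the centres of the $\inv_\ka$-algorithm avoid stable-snc points is only valid while the maximum value of $\inv_\ka$ is \emph{not} a special value; since special values not attained in year zero can become maximal after blowings-up, you must use the paper's dynamic stopping rule (``run until the maximum value first lies in $\cS$'') rather than fixing the target value $\inv_{c,s}$ in advance --- otherwise you risk blowing up a maximum locus that equals a higher special value and contains stable-snc points, violating condition (2). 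This last point is a repairable imprecision, but the missing Step 3 is a real gap in the argument.
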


\begin{proof}
We will break the algorithm into three main steps, with the second and third to be iterated several
times.
\smallskip

\noindent
\textbf{Step 1.} We first reduce to the case that all irreducible components of $X$ are smooth, 
using Theorem \ref{thm:SimDes}.
\smallskip

Now let $\cS$ denote the set of all special values $\inv_{c,s}$, $c = (c_1,\ldots, c_m)$, 
$s=(s_1,\ldots, s_d)$ 
(see Definition \ref{def:specialinv}). Then $\cS$ is totally ordered (lexicographically).

Consider the desingularization sequence determined by the invariant $\inv_\ka$, defined in 
\S\ref{sec:DesProd}.
\smallskip

\noindent
\textbf{Step 2.} We follow the desingularization algorithm determined by $\inv_\ka$ (i.e., the sequence of 
blowings-up with successive centres given by the maximum locus of $\inv_\ka$) until the maximum of 
$\inv_\ka$ is a value $\tau$ in $\cS$ for the first time. We then blow up any irreducible 
component of the maximum locus $(\inv_\ka = \tau)$ that contains no stable-snc points.
The result is that
$(X,E)$ ($=(X_j,E_j)$, for some $j$) is generically stable-snc on every component of the locus $(\inv_\ka = \tau)$. (The latter
may now be empty.)  

We now clean the locus $((\inv_\ka)_k = (\tau)_k)$
of the truncated invariant, for every $k$, beginning with the largest $k$; see Section \ref{sec:clean}.
The result of cleaning is that the invariants $\mu_{H,k+1}= 0$ on $(\inv_\ka = \tau)$, for all $H\in E$ and $k\geq1$.
Recall that, for each $k$, the cleaning blowings-up are given by desingularization of a monomial marked ideal
$\underline{\mathcal{M}}(\underline{\mathcal{I}}^k)$ with cosupport in $((\inv_\ka)_k \geq (\tau)_k)$.
The cleaning blowings-up may be nontrivial even in the case that $(\inv_\ka = \tau)= \emptyset$, but are needed
even in this case to guarantee functoriality.

Cleaning involves blowing up only points where $\mu_{H,k+1}>0$, for some $k$, so never involves blowing up
stable-snc points (by Theorem \ref{thm:char}). After cleaning, we have the normal forms of Lemma \ref{lem:norm}
with all monomials $m_i=1$.
\smallskip

Recall that the characterization of stable-snc points $a$ given by Theorem \ref{thm:char} involves the
the minimal embedding dimension $e_{X,a}$. After Step 2 above, it need not be true that $e_X$ is
constant on each irreducible component of the locus $(\inv_\ka = \tau)$ (although the number 
of irreducible components of $X$ is constant). The purpose of Step 3 following is to make $e_X$ constant
on components of the maximal locus, in order to apply Theorem \ref{thm:char}.
\smallskip

\noindent
\textbf{Step 3.} If the locus $T := (\inv_\ka = \tau)$ is nonempty after Step 2, then $T$ is the maximum
locus of $\inv_\ka$, each irreducible component of $T$ is generically stable-snc, and all $\mu_{H,k+1}=0$
on $T$. We now apply the algorithm for simultaneous desingularization of the pair $(X,T)$, as in \S\ref{sec:SimDes};
i.e., the sequence of blowings-up given by the maximum loci of the invariant $\inv_\io$ determined by 
$\io := (H_X,H_T)$, with the lexicographic ordering of such pairs. 
Since $T$ is smooth, the invariant $\inv_\io$ has the form $(\io, s, \inv_{\ucI^1})$, where $\ucI^1$
is the marked ideal given by a presentation of the Hilbert-Samuel function $H_X$ restricted to $N = T$, plus a boundary.
We blow up following the algorithm until $H_X$ and therefore the embedding dimension $e_{X,a}$ is constant
on every component of $T$. The centres of all blowings-up involved lie in $T$ (thus are $\inv_\ka$-admissible) and
contain no stable-snc points; all $\mu_{H,k+1}$ remain zero on $T$.
\smallskip

After Step 3, every component of $T = (\inv_\ka = \tau)$ lies in some $\Sigma_{\Omega}$. By
Theorem \ref{thm:char}, $(X,E)$ is stable-snc at every point of $T$, and therefore in some neighbourhood of $T$.

We can now iterate Steps 2 and 3 in the complement of $T$. All centres of blowing up involved are closed in $X$
because they contain no stable-snc points and $X$ is stable-snc in a neighbourhood of $T$. The
process terminates after finitely many iterations of Steps 2 and 3 (see Remark \ref{rem:why}), when
$(X,E)$ becomes stable-snc.
\end{proof}

\begin{remarks}\label{rem:varfunct} 
(1) The desingularization algorithm of Theorem \ref{thm:mainDEqualZero} is functorial with respect to
\'stale or smooth morphisms that preserve the 
number of irreducible components of $X$ at every point; cf. \cite[Sect.\,9]{BV}.
\smallskip

\noindent
(2) If $a \in C_j$, where $C_j \subset X_j$ is the centre of the blowing-up $\s_{j+1}$, then the component
of $C_j$ at $a$ lies in all irreducible components of $X_j$ at $a$.
\end{remarks}


\section{Characterization of stable-snc singularities of a triple}\label{subsec:char}

The remainder of the paper is devoted to an algorithmic proof our main theorem \ref{thm:maintriples} for a general
triple $(X,D,E)$. We will begin by making $(X,E)$ stable-snc, using Theorem
\ref{thm:mainDEqualZero}.
The remainder of the proof is by induction on the number of irreducible components of $X$, so we will henceforth assume
that the components of $X$ have a given ordering $X = X^{(1)}\cup\cdots\cup X^{(m)}$.  Theorem
\ref{thm:maintriples} will be functorial with respect to triples $(X,D,E)$ where the components of $X$ have a fixed ordering.

The proof involves a characterization of stable-snc points (Proposition \ref{lemmafactorssnc} below) that plays a role
similar to that played by Theorem \ref{thm:char} in the proof of Theorem \ref{thm:mainDEqualZero}, but in the
inductive setting needed here; in particular, Proposition \ref{lemmafactorssnc} involves the assumption that
$(X,D,E)$ is stable-snc after dropping the last component $X^{(m)}$ of $X$ together with the components of $D$ that
lie in $X^{(m)}$. Proposition \ref{lemmafactorssnc} will be used after reducing the main problem to the case that
$(X,E)$ is stable-snc and $D$ is a reduced divisor on $X$ with no components in $\Sing X\cup \Supp E$. Proposition \ref{lemmafactorssnc} treats points lying in at least two components of $X$ and in the support of $D$. Points lying outside the support of $D$ are already stable-snc by assumption, and points lying in only one component of $X$ can be studied using Proposition \ref{thm:char}.

The inductive proof of Theorem \ref{thm:maintriples} begins with the case that $X$ is smooth and
irreducible. In this case, stable-snc means that $D$ is snc. Snc points of a divisor can be
characterized either using the desingularization invariant \cite[Thm.\,3.4]{BDV} or (as a particular case of stable-snc) 
by Theorem \ref{thm:char} with $c_1=\ldots=c_{\kappa_X(a)}=1$, or $c_1 = 0$ if $\kappa_X(a)=1$; 
Theorem \ref{thm:maintriples} in the case that $X$ is smooth and irreducible follows from \cite[Thm.\,1.4]{BDV}
or from Theorem \ref{thm:mainDEqualZero}.

In the inductive setting of the proof of our main theorem, 
we will use a partition of the last component $X^{(m)}$ of $X$ that is similar but not identical to
the partition in Definition \ref{def:sig}.

\begin{definition}\label{def:sigmapq}
Consider $\Omega=(e,c)$, where $e \in \IN$ and $c:= (c_1,c_2,\ldots,c_n)$ with $n\leq m$ and $c_1\geq c_2\geq\ldots\geq c_n\geq0$. Assume that $(X,E)$ is stable-snc and that $D$ has no components in $\Sing X \cup \Supp E$.  Let $q \in \IN$.
We define $\Sigma_{\Omega,q}=\Sigma_{\Omega,q}(X,D)=\Sigma_{\Omega,q}(X,D,E)$ as the set of points $a \in X^{(m)}$ such that:
\begin{enumerate}
\item there are precisely $n$ different components $X^{(i_1)},\ldots,X^{(i_n)}$ of $X$ such that, for each $j$, either $X^{(i_j)} = X^{(m)}$ or $X^{(i_j)}$ contains a component of $D$ at $a$;
\item $e$ is the minimal embedding dimension of $\cup_{j=1}^{n}X^{(i_j)}$ at $a$;
\item $c_1,\ldots,c_{n}$ are the codimensions of $X^{(i_1)},\ldots,X^{(i_n)}$, respectively, in a minimal embedding
variety for $\cup_{j=1}^{n}X^{(i_j)}$ at $a$.
\item $q$ is the minimum number of components of $D$ at $a$ which lie in any one of the $X^{(i_j)}$.
\end{enumerate}
\end{definition}

As in Definition \ref{def:sig}, we list the $c_i$ in decreasing order so that the stratum
$\Sigma_{\Omega,q}$ corresponds to a value of the Hilbert-Samuel function 
(Definition \ref{def:Hpq} below), which does not depend on an ordering of the $c_i$.

\begin{example}\label{ex:char}
Consider $X:=X^{(1)}\cup X^{(2)}\cup X^{(3)}\subset\mathbb{A}^6$, where $X^{(1)}=(x_1=x_2=0)$, $X^{(2)}=(x_4=0)$ and $X^{(3)}=(x_3=0)$, and let $D=(x_1=x_2=y_1=0)+(x_3=y_1y_2=0)$. Let $a = 0$. Then $a \in \Sigma_{(6,2,1),1}$. $D$ has
two components, one in each of $X^{(1)}$ and $X^{(3)}$. The latter have codimensions $2$ and $1$, respectively, in
$\mathbb{A}^6$, which is a minimal embedding variety already for $X^{(1)}\cup X^{(3)}$.
\end{example}

\begin{definition}\label{def:Hpq}
Consider $\Omega=(e,c)$, with $c=(c_1,\ldots,c_n)$, and $q\in\mathbb{N}$, as in Definition \ref{def:sigmapq}.
Assume that $|c|+q\leq e$, where $|c| = c_1 + \cdots + c_n$.  We let $H_{\Omega,q}$ denote the Hilbert-Samuel function of the ideal
\[
\bigcap_{i=1}^{n}(x_{i,1},\ldots, x_{i,c_i},y_1\dotsm y_q)
\]
in the ring of formal power series $\IK\llbracket x_{1,1},\ldots,x_{n,c_n},
y_1,\ldots,y_{e-|c|}\rrbracket$. (See Section \ref{sec:HS}). 
\end{definition}

The $H_{\Omega,q}$ are precisely the values that the Hilbert-Samuel function of $\Supp D$
can take at stable-snc points.\medskip

See \cite[Example 5.6]{BV} for an illustration of the kind of information provided by the Hilbert-Samuel function.
The condition that the Hilbert-Samuel function of $\Supp D$ equal $H_{\Omega,q}$ at a point of $\Sigma_{\Omega,q}$ is 
necessary for stable-snc. But it is not sufficient, as shown by \cite[Example 5.6]{BV}. Additional geometric data are needed; these will be given using an ideal sheaf that is an obstruction
to stable-snc (Definition \ref{definitionJ}). This obstruction will be eliminated using ``cleaning-type''
blowings-up similar to those used
in \cite[Sect.\,7]{BV} to eliminate an analogous obstruction; see Proposition \ref{prop:desingOfJ}.

Lemma \ref{lem:HSlemma} in the following section is used in the proof of Proposition \ref{lemmafactorssnc}, 
and provides some initial control over the divisor $D$ at a point
of $\Sigma_{\Omega,q}$ where $X$ has $\geq 2$ components and the Hilbert-Samuel function has the ``correct''
value $H_{\Omega,q}$.
 
\begin{definition}\label{def:pairs}
Assume that no irreducible component of $D$ lies in $\Sing X$. Set $X^i:=X^{(1)}\cup\ldots\cup X^{(i)}$, $1\leq i\leq m$. Let $D_i$ denote the sum of all components of $D$ lying in $X^{(i)}$; i.e. $D_i$ is the divisorial part of the restriction of 
$D$ to $X^{(i)}$. We will sometimes write $D_i=D|_{X^{(i)}}$. Set $D^i:=\sum_{j=1}^iD_i$.
\end{definition}
 
\begin{definition}\label{definitionJ} \emph{Obstruction ideal.}
Assume that $X$ is
stable-snc, and that no irreducible component of $D$ lies in $\Sing X$. 
Let $J=J(X,D)$ denote the quotient ideal sheaf
 \[
 J=J(X,D):=\bigcap_{1\leq i,j\leq m}[I_{D_i}+I_{X^{(j)}}:I_{D_{j}}+I_{X^{(i)}}],
 \]
where $I_{D_i}$, $I_{X^{(j)}}$, $I_{D_{j}}$ and $I_{X^{(i)}}$ are the
ideal sheaves of $\Supp D_i$, $X^{(j)}$, $\Supp D_{j}$ and $X^{(i)}$
(respectively) in $\cO_X$. 
\end{definition}

Note that, at a point which does not lie in some component $X^{(i)}$ of $X$, all quotients involving $X^{(i)}$
in the intersection above 
are equal to $\mathcal{O}_X$ and can therefore be ignored.

An ideal sheaf defined in a similar way to $J(X,D)$ above was used in \cite{BV}. Definition \ref{definitionJ} is 
more suitable here, and in fact also simplifies the argument in \cite{BV}.

We consider decompositions $X=Y\cup T$, where $Y$, and $T$ are two closed subvarieties with no common components. The inductive characterization of stable-snc will be formulated using a $4$-tuple of the form $(Y,D,E,T)$, where $X=Y\cup T$, $(X,E)$ is stable-snc, and $D$ is a Weil divisor on $Y$ such that $(Y,D,E|_Y)$ is stable-snc.

\begin{definition}\label{def:4tuples}
We say that $(Y,D,E,T)$ is stable-snc at $a$ if there exists a Weil divisor $D_T$ on $T$ such that 
$(Y\cup T, D+D_T ,E)$ is stable-snc at $a$. The \emph{transform} of $(Y,D,E,T)$ by a sequence of admissible blowings-up
for $(X,E)$ is given by the transform of $(X,D,E)$ as in Definition \ref{not:seqembedded}.
\end{definition}

\begin{proposition}[Inductive characterization of stable-snc]\label{lemmafactorssnc}
Consider a triple $(X,D,E)$ satisfying the hypotheses of Theorem \ref{thm:maintriples} and 
let $X^{(i)}$, $i=1,\ldots,m$, denote the irreducible components of $X$ (ordered, as above). 
Assume $m\geq 2$. Let $a \in X^{m-1} \cap X^{(m)}$ (in the notation above). Then:
\begin{enumerate}
\item 
$(X,D,E)$ is stable-snc at $a$ if and only if both $(X^{m-1},D^{m-1},E, X^{(m)})$ and
$(X,D)$ are stable-snc at $a$.
\item
Suppose that
$D$ is reduced, with no irreducible component in $\Sing X$. Assume that $a$
belongs to at least two components of $D$, one in $X^{(m)}$ and the other in $X^{(i)}$, for some $i \neq m$. Then $(X,D)$ is stable-snc at $a$ if and only if
\begin{enumerate}
\item $(X^{m-1},D^{m-1},0, X^{(m)})$ is stable-snc at $a$;
\item there exist $\Omega$ and $q$ as in Definition \ref{def:sigmapq},
 such that $a\in\Sigma_{\Omega,q}(X,D)$ and $H_{\Supp D,a}
 =H_{\Omega,q}$, where $H_{\Supp D,a}$ is the Hilbert-Samuel function of $\Supp D$ at $a$;
 \item $J_{a}=\mathcal{O}_{X,a}$.
 \end{enumerate}
 \end{enumerate}
 \end{proposition}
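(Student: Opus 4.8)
The plan is to reduce both equivalences to the explicit coordinate normal form of Definition \ref{def:stablesncpairs}, proving each ``only if'' by direct computation in normal-form coordinates and each ``if'' by assembling a single adapted coordinate system out of the hypotheses. Throughout I would work in a fixed minimal embedding variety $Z$ of $X$ at $a$, which is legitimate by Remark \ref{rem:stablesnc}, so that the data attached to the $X$-structure (the components $X^{(i)}$, their codimensions, and the maximal-contact coordinates straightening them) are canonical and common to every normal form under consideration.

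For part (1) the forward implication is essentially formal: if $(X,D+E)$ is stable-snc at $a$, a coordinate system on $Z$ as in Definition \ref{def:stablesncpairs} makes the $X^{(i)}$ coordinate subspaces and expresses $D+E$ through coordinate hyperplanes; since $\supp D$ and $\supp E$ are sub-collections of those hyperplanes, the same coordinates show $(X,D)$ is stable-snc, while taking $D_T := D_m$ (so that $\supp(D^{m-1}+D_T)=\supp D$) exhibits $(X^{m-1},D^{m-1},E,X^{(m)})$ as stable-snc in the sense of Definition \ref{def:4tuples}. The content lies in the converse. Here I would first invoke the $4$-tuple hypothesis to fix coordinates on $Z$ in which $X$, $D^{m-1}$, and $E$ are simultaneously snc, and then use the $(X,D)$ hypothesis to bring the remaining components of $D$, namely those in $X^{(m)}$, into coordinate form. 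Since both coordinate systems render $X$ stable-snc, the $X$-adapted coordinates are common, and the task reduces to showing that the hyperplanes straightening $\supp E$ and those straightening $\supp D_m$ can be chosen simultaneously; because $D$ and $E$ share no components and each is already snc with respect to $X$, I expect this to come down to a transversality check for $\supp E$ against $\supp D_m$ inside the smooth component $X^{(m)}$, to be verified through Lemma \ref{lem:stablesnc}.

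For part (2) I would argue necessity and sufficiency separately. Assuming $(X,D)$ stable-snc and passing to normal form, condition (a) follows exactly as in part (1) (with $E$ dropped), condition (b) follows by reading off the minimal embedding dimension and the codimensions of the relevant components to place $a$ in the correct stratum $\Sigma_{\Omega,q}$ and then identifying $H_{\Supp D,a}$ with the Hilbert--Samuel function of the model ideal $\bigcap_{i}(x_{i,1},\dots,x_{i,c_i},y_1\cdots y_q)$ of Definition \ref{def:Hpq}, and condition (c) follows by evaluating each quotient $[I_{D_i}+I_{X^{(j)}}:I_{D_j}+I_{X^{(i)}}]$ in the normal form and checking that it is the full local ring, so that $J_a=\cO_{X,a}$. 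For sufficiency I would start from (a) to obtain coordinates in which $X$ is stable-snc and $D^{m-1}$ is in normal form, use (b) together with Lemma \ref{lem:HSlemma} to gain initial control of $\supp D$, in particular of the components of $D$ lying in $X^{(m)}$, at a point of $\Sigma_{\Omega,q}$, and finally use the vanishing of $J$ to straighten the remaining components of $D$ compatibly with the already-fixed $X$- and $D^{m-1}$-coordinates.

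The hard part will be the sufficiency half of part (2). As the text emphasizes (cf. \cite[Example 5.6]{BV}), the Hilbert--Samuel condition (b) is necessary but not sufficient, and the role of the obstruction ideal is precisely to supply the missing geometric information. Concretely, I expect the difficulty to be showing that $J_a=\cO_{X,a}$ forces the components of $D_m$ in $X^{(m)}$ to agree, along each intersection $X^{(m)}\cap X^{(i)}$, with the corresponding components of $D_i$ already straightened by (a), so that a single coordinate change linearizes all of $\supp D$ at once. The quotient-ideal definition of $J$ in Definition \ref{definitionJ} is engineered to measure exactly this compatibility, and unwinding it at the level of the normal form produced by (a) and (b) --- and confirming that its triviality is equivalent to the simultaneous linearizability that stable-snc demands --- is where the main technical effort will lie.
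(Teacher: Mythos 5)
Your outline follows the same route as the paper's own proof---necessity of (1) and of (2)(a)--(c) by direct computation in the stable-snc normal form, and sufficiency of (2) by combining (a), condition (b) via Lemma \ref{lem:HSlemma}, and triviality of $J$---but it stops exactly where the proposition has content. For the sufficiency half of (2) you announce that unwinding $J_a=\cO_{X,a}$ ``is where the main technical effort will lie'' without performing it; that effort \emph{is} the proof. Concretely, the paper: writes $I_{\Supp D}=(I_m+(f))\cap\bigcap_{i<m}(I_i+(y_1\cdots y_q))$ using (a); invokes (b) and Lemma \ref{lem:HSlemma} to choose $f\in\bigcap_{i<m}I_i+(y_1\cdots y_q)$ with $\ord f=q$; writes $f=g_1+y_1\cdots y_q\,g_2$ with $g_1\in\bigcap_{i<m}I_i$ and computes $J_a=I_m+(g_2)+\bigcap_{i<m}I_i$ (a step that uses the hypothesis that no component of $D$ lies in $\Sing X$, to know $g_2\notin I_m+I_i$), so that (c) says precisely that $g_2$ is a unit; deduces $I_{\Supp D}=\bigcap_{i=1}^m I_i+(f)$; factors $f|_{X^{(m)}}=f_1\cdots f_q$ into irreducible factors of order one; extends each $f_i$ to the ambient variety preserving the memberships $f_i\in I_j+(y_k)$; and finally verifies that the $f_i$ and the $x_{i,j}$ have linearly independent images in $\hat{\fm}/\hat{\fm}^2$, hence are part of a regular coordinate system. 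Your proposal contains none of these steps; in particular it never explains how ``agreement of the traces $D_i\cap X^{(j)}=D_j\cap X^{(i)}$'' (which is what $J_a=\cO_{X,a}$ literally says) gets upgraded to the existence of a single system of coordinates $y_1,\ldots,y_q$ cutting out all of $\Supp D$ at once; that upgrade is exactly the factor-and-extend argument just described.

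There is also a flaw in your plan for the converse of (1). You reduce it to ``a transversality check for $\Supp E$ against $\Supp D_m$ inside the smooth component $X^{(m)}$'', but a pairwise transversality check of this kind cannot suffice: stable-snc requires a \emph{single} ambient coordinate system straightening all of $X$, $D$ and $E$ simultaneously, and the gap between pairwise conditions and the existence of such a system is the whole point of the paper (cf.\ Example \ref{ex:snc-stablesnc}); nothing in your sketch produces that system. The paper's argument avoids the $E$-versus-$D_m$ confrontation altogether by an observation absent from your proposal: if $(X,D)$ is stable-snc at $a$, then every component of $D$ is cut out on $X$ by one of the hyperplanes $(y_j=0)$ of the normal form, so the restriction of $D$ to the intersection $Z$ of all components of $X$ at $a$ equals the restriction of $D^{m-1}$; consequently, given that $(X,D)$ is stable-snc, $(X,D,E)$ is stable-snc at $a$ if and only if $(D+E)|_Z=(D^{m-1}+E)|_Z$ is an snc divisor on $Z$, and the latter is immediate from the $4$-tuple hypothesis. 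Without this reduction (or some replacement for it), your coordinate-merging step remains an unsupported assertion.
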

 
 Proposition \ref{lemmafactorssnc} will be proved at the end of Section \ref{sec:HS}.

 \begin{remarks} Consider assertion (2) of the theorem. (1) If $a$ lies in $X^{(m)}$ but in no other $X^{(i)}$, then 
of course (a) is vacuous 
 and $J_a =\mathcal{O}_{X,a}$. 
In this case, Theorem \ref{thm:char} applied to $(X^{(m)},\Supp D)$ replaces Proposition
\ref{lemmafactorssnc}.
\smallskip

(2) We will use Proposition \ref{lemmafactorssnc} to remove unwanted
singularities at points lying in more than two components of $X$,
by first blowing up to get either $a \notin X^{(m)}$, or $a \in X^{(m)}$ satisfying (b), and then applying further blowings-up to get (c); 
see Section \ref{sec:desingularization of J}.
\smallskip

(3) Note that, assuming (a), $J$ as given in Definition \ref{definitionJ} coincides with the intersection 
for $i=1,\ldots,m-1$ and $j=m$.
\end{remarks}

\section{The Hilbert-Samuel function and stable simple normal crossings}\label{sec:HS}

Lemma \ref{lem:HSlemma} of this section plays an important
part in our use of the Hilbert-Samuel function to characterize stable-snc points.
We begin with the definition of the Hilbert-Samuel function and
its relationship with the diagram of initial exponents (cf.\,\cite{BMjams}).
At the end of the section, we use Lemma \ref{lem:HSlemma} to prove the
inductive characterization of stable-snc (Lemma \ref{lemmafactorssnc}).

\begin{definition}\label{def:HS}
Let $A$ denote a Noetherian local ring $A$ with maximal ideal $\fm$. The \emph{Hilbert-Samuel function} $H_A \in \IN^{\IN}$ of $A$ is defined by
\[
H_A(k) := \length \frac{A}{\fm^{k+1}}, \quad k \in \IN.
\]
If $I \subset A$ is an ideal, we sometimes write $H_{I}:=H_{A/I}$.
If $X$ is an algebraic variety and $a\in X$ is a closed point, we define
$H_{X,a}:=H_{{\cO}_{X,a}}$, where $\cO_{X,a}$ denotes the local ring of $X$ at $a$.
\end{definition}

\begin{definition}\label{definitionorderHSfunction}
Let $F,G \in \mathbb{N}^{\mathbb{N}}$. We say that $F>G$ if $F(n)\geq G(n)$, for every $n$, and  $F(m)>G(m)$, for some $m$. This relation induces a partial order on the set of all possible values for the Hilbert-Samuel functions of Noetherian local rings. 
\end{definition}
Note that $F\nleq G$ if and only if either $F>G$ or $F$ is incomparable to $G$.

Let $\wA$ denote the completion of $A$ with respect to $\fm$. Then $H_{A}=H_{\wA}$ \cite[\S24.D]{Ma}. If $A$ is regular, then we can identify $\wA$ with a ring of formal power series, $\IK\llbracket x\rrbracket$, where $x=(x_1,\ldots,x_n)$. Then  
\[
H_I(k):=\dim_{\IK}\frac{\IK\llbracket x\rrbracket}{I+\fn^{k+1}},
\]
where $\fn := (x_1,\ldots,x_n)$
is the maximal ideal of $\IK\llbracket x\rrbracket$.

If $\alpha=(\alpha_1,\ldots,\alpha_n)\in\mathbb{N}^{n}$, set $|\alpha|:=\alpha_1+\ldots+\alpha_n$. The lexicographic order of $(n+1)$-tuples, $(|\alpha|,\alpha_1,\ldots,\alpha_n)$ induces a total ordering of $\mathbb{N}^n$. Let $f\in \IK\llbracket x\rrbracket$ and write $f=\sum_{\alpha\in\mathbb{N}^n}f_\alpha x^{\alpha}$, where $x^{\alpha}$ denotes $x_1^{\alpha_1}\dotsm x_n^{\alpha_n}$. Define $\supp(f)=\{\alpha\in\mathbb{N}^n:\,f_\alpha\neq0\}$. The \emph{initial exponent} $\exp(f)$ is defined as the smallest element of $\supp(f)$. If $\al=\exp(f)$, then $f_\alpha x^{\alpha}$ is called the \emph{initial monomial} $\mon(f)$
of $f$.

\begin{definition}\label{def:diag}. Consider an ideal $I\subset K\llbracket x\rrbracket$. The \emph{initial monomial ideal}
$\mon(I)$ of $I$ denotes the ideal generated by $\{\mon(f): f\in I\}$. The \emph{diagram of initial exponents} $\cN(I)\subset\mathbb{N}^n$ is defined as
\[
\cN(I) := \{\exp(f): f \in I\setminus\{0\}\}.
\]
\end{definition}

Clearly, $\cN(I) + \IN^n = \cN(I)$. For any $\cN \subset\mathbb{N}^n$ such that $\cN=
\cN+\mathbb{N}^n$, there is a smallest set $\cV \subset \cN$ such that $\cN=\cV+\cN$; moreover, $\cV$ is finite. We call $\cV$ the set of \emph{vertices} of $\cN$. 

\begin{proposition}\label{prop:diag}
For every $k\in\mathbb{N}$, $H_{I}(k)=H_{\mon(I)}(k)$ is the number of elements $\alpha\in\mathbb{N}^n$ such that $\alpha\notin \cN(I)$ and $|\alpha|\leq k$.
\end{proposition}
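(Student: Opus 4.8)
The plan is to convert the statement into a purely linear-algebraic count over the finite-dimensional quotients $V_k := \IK\llbracket x\rrbracket/\fn^{k+1}$. First I would record that $\IK\llbracket x\rrbracket/(I+\fn^{k+1}) = V_k/W_k$, where $W_k$ denotes the image of $I$ in $V_k$; hence $H_I(k) = \dim_\IK V_k - \dim_\IK W_k$. Since the monomials $x^\alpha$ with $|\alpha|\le k$ form a basis of $V_k$, one has $\dim_\IK V_k = \#\{\alpha\in\IN^n : |\alpha|\le k\}$. Therefore the whole proposition reduces to the single claim
\[
\dim_\IK W_k = \#\bigl(\cN(I)\cap\{\alpha : |\alpha|\le k\}\bigr),
\]
because subtracting this from $\dim_\IK V_k$ leaves exactly the count of $\alpha\notin\cN(I)$ with $|\alpha|\le k$.

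The heart of the argument is to compute $\dim_\IK W_k$ by tracking leading (i.e.\ smallest) exponents, using that the order on $\IN^n$ defined before the statement refines total degree, so that every monomial of degree $>k$ is larger than every monomial of degree $\le k$. For the lower bound, for each $\alpha\in\cN(I)$ with $|\alpha|\le k$ I would choose $f_\alpha\in I$ with $\exp(f_\alpha)=\alpha$; its image in $V_k$ is nonzero and has $x^\alpha$ as its minimal-exponent term (the discarded degree-$(>k)$ terms were already larger in the order, and the surviving ones have exponent $>\alpha$). Distinct $\alpha$ yield a triangular, hence linearly independent, family, so $\dim_\IK W_k \ge \#(\cN(I)\cap\{|\alpha|\le k\})$. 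For the upper bound, any nonzero class in $W_k$ lifts to some $f\in I$ with $f\notin\fn^{k+1}$, whence $\exp(f)\in\cN(I)$, $|\exp(f)|\le k$, and $\exp(f)$ is precisely the leading exponent of the class in $V_k$. Thus every nonzero vector of $W_k$ has its leading exponent in the finite set $S:=\cN(I)\cap\{|\alpha|\le k\}$, and a standard triangulation shows that a subspace of $V_k$ all of whose nonzero vectors have leading exponents in $S$ has dimension at most $\#S$. Combining the two bounds proves the claim, and with it $H_I(k)=\#\{\alpha\notin\cN(I):|\alpha|\le k\}$.

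Finally I would treat $\mon(I)$. As a monomial ideal it is generated by $\{x^\alpha : \alpha\in\cN(I)\}$, and for any monomial ideal the set of initial exponents equals its exponent set, which here is $\cN(I)+\IN^n=\cN(I)$ (as observed just after Definition \ref{def:diag}). Hence $\cN(\mon(I))=\cN(I)$, and applying the count just established to $\mon(I)$ in place of $I$ gives $H_{\mon(I)}(k)=\#\{\alpha\notin\cN(I):|\alpha|\le k\}=H_I(k)$. The main obstacle is the upper bound in the displayed claim: one must ensure that the high-degree tails lying in $\fn^{k+1}$ never corrupt the leading exponent, which is exactly why the chosen order refines degree. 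This filtered comparison of the Hilbert--Samuel function with the diagram of initial exponents is the substance of \cite{BMjams}, which I would cite for the underlying division/triangulation mechanism should a fully self-contained argument be deemed unnecessary.
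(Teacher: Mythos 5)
Your proof is correct. Note, though, that the paper itself does not give an argument at all: its entire ``proof'' is the citation \emph{See \cite[Corollary 3.20]{BMinv}}, deferring to Bierstone--Milman's original treatment, where the comparison between the Hilbert--Samuel function and the diagram of initial exponents is established. Your proposal supplies the missing content as a self-contained linear-algebra argument, and it is sound at every step: the reduction $H_I(k)=\dim_\IK V_k-\dim_\IK W_k$, the lower bound by a triangular family $\{f_\alpha\}$ indexed by $\cN(I)\cap\{|\alpha|\le k\}$, the upper bound by observing that every nonzero class in $W_k$ has leading exponent in that set, and the identification $\cN(\mon(I))=\cN(I)+\IN^n=\cN(I)$ handling the second equality. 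You correctly isolate the one point where the argument could fail, namely that truncation modulo $\fn^{k+1}$ must not disturb leading exponents, and you correctly resolve it using the fact that the total order on $\IN^n$ (lexicographic on $(|\alpha|,\alpha_1,\ldots,\alpha_n)$) refines total degree; this also guarantees that $\exp(f)$ is well defined for a formal power series, since the order has order type $\omega$. What the paper's approach buys is brevity and reliance on a reference where the statement sits inside a fuller theory (formal divison, standard bases); what yours buys is a short, elementary, and complete proof readable without consulting \cite{BMinv}, at the cost of redoing in this special setting what the cited corollary already provides.
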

\begin{proof}
See \cite[Corollary 3.20]{BMinv}.
\end{proof}

In \cite[Example 5.6]{BV}, although the intersections $D_1\cap X^{(2)}$ and $D_2\cap X^{(1)}$ are not the same, $D_2\cap X^{(1)}$ has the same components as $D_1\cap X^{(2)}$ plus some extra components (precisely, plus one extra component $(x_1=x_2=z=0)$). The following lemma shows that this is the worst that can happen when we have the correct value $H_{\Omega,q}$ of the Hilbert-Samuel function in $\Sigma_{\Omega,q}$.

\begin{lemma}\label{lem:HSlemma}
Consider $a\in\Sigma_{\Omega,q}$, where $\Omega=\left(e,(c_1,\ldots,c_m)\right)$ and $m\geq2$.
Assume that $X$ is embedded locally in a coordinate chart of a smooth variety $Z$ of minimal dimension, with a system of coordinates $\{x_{i,j}\}_{1\leq i\leq m,\ 1\leq j\leq c_i}$, $\{y_i\}_{1\leq i\leq r}$, $\{w_i\}_{1\leq i\leq n-|c|-q}$. Assume $X = V(\cap_{i=1}^m(x_{i,1},\ldots x_{i,c_i}))$.
Suppose that $D$ is a reduced divisor (so we view it as a subvariety), with no component in 
$\Sing X$, given at $a=0$ by an ideal $I_D$ of the form
\begin{equation}\label{eq:HSlemma}
I_D=\left[\bigcap_{i=1}^{m-1}(x_{i,1},\ldots x_{i,c_i})+(y_1\dotsm y_r)\right]\cap(x_{m,1},\ldots,x_{m,c_m},f).
\end{equation}
(In particular, $q$ is the minimum of $r$ and the number of irreducible factors of $f|_{(x_{m,1}=\ldots=x_{m,c_m}=0)}$\emph). 

Let $H_D$ denote the Hilbert-Samuel function $H_{I_D}$.
Then $H_D=H_{\Omega,q}$ if and only if we can choose $f$ so that $\ord\,f=q$, $r=q$ and 
$$
f\in J:=\bigcap_{i=1}^{m-1}(x_{i,1},\ldots x_{i,c_i})+(y_1\dotsm y_r)+(x_{m,1},\ldots,x_{m,c_m}).
$$
Moreover, if either $\ord\,f>q$, $r>q$ or $f\notin J$, then $H_D\nleq H_{\Omega,q}$.
\end{lemma}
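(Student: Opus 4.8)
The plan is to reduce the statement to Hilbert--Samuel computations in the completed local ring $R=\IK\llbracket x,y,w\rrbracket$, using three tools: Proposition \ref{prop:diag} (the Hilbert--Samuel function is read off the diagram of initial exponents), the additivity $H_{A\cap B}=H_A+H_B-H_{A+B}$ coming from the exact sequence $0\to R/(A\cap B)\to R/A\oplus R/B\to R/(A+B)\to 0$, and the elementary monotonicity $I\subsetneq I'\Rightarrow H_{R/I}\gneq H_{R/I'}$ (true because $R/I$ is $\fm$-adically separated, so any $g\in I'\setminus I$ lies outside $I+\fm^{k+1}$ for $k\gg 0$). Writing $x_{i,\bullet}$ for $x_{i,1},\dots,x_{i,c_i}$, I set $A:=\bigcap_{i<m}(x_{i,\bullet})+(y_1\dotsm y_r)$ and $B:=(x_{m,\bullet},f)$, so that $I_D=A\cap B$. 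The first step is to record the monomial identities (all ideals in play being monomial, they are checked on the exponent lattice) $\bigcap_{i=1}^{m}(x_{i,\bullet},y_1\dotsm y_q)=A_0\cap B_0$, where $A_0:=\bigcap_{i<m}(x_{i,\bullet})+(y_1\dotsm y_q)$ and $B_0:=(x_{m,\bullet},y_1\dotsm y_q)$, together with $A_0+B_0=J_0$, where $J_0$ is the ideal $J$ of the statement with $r$ replaced by $q$. By Definition \ref{def:Hpq} (the coordinates not occurring in the model ideal being immaterial for its Hilbert--Samuel function) and additivity, this gives $H_{\Omega,q}=H_{A_0}+H_{B_0}-H_{J_0}$.

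Next I would analyse the three ingredients of $H_D=H_A+H_B-H_{A+B}$. Since $R/B\cong \cO_{X^{(m)},0}/(f|_{X^{(m)}})$ is a hypersurface in a regular local ring, its diagram of initial exponents is a single shifted orthant, so by Proposition \ref{prop:diag} $H_B$ depends only on $\ord(f|_{X^{(m)}})$ and is strictly increasing in it; reading ``$\ord f$'' as the order of the defining function $f|_{X^{(m)}}$ of $D_m$ on $X^{(m)}$, the condition ``$\ord f=q$'' is exactly $H_B=H_{B_0}$ (recall $\ord(f|_{X^{(m)}})\ge q$ always). For the $A$-term, $(y_1\dotsm y_r)\subseteq(y_1\dotsm y_q)$ gives $A\subseteq A_0$, hence $I_D=A\cap B\subseteq A_0\cap B=:I_{D'}$ and $H_D\ge H_{D'}$; this reduces the lower bound to the case $r=q$, in which $A=A_0$.

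In the case $r=q$ one has $A_0+B=J_0+(f)$, so additivity yields the clean identity $H_D-H_{\Omega,q}=(H_B-H_{B_0})+(H_{J_0}-H_{J_0+(f)})$. Both brackets are non-negative termwise (because $\ord(f|_{X^{(m)}})\ge q$ and $J_0\subseteq J_0+(f)$), so $H_D\ge H_{\Omega,q}$, with equality if and only if $\ord f=q$ and $f\in J_0=J$. This gives the ``if'' direction, and for $r=q$ it also settles two of the ``moreover'' cases: if $\ord f>q$ the first bracket is nonzero somewhere, and if $f\notin J$ then $J_0\subsetneq J_0+(f)$, so by the monotonicity fact the second bracket is strictly positive somewhere. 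Together with $H_D\ge H_{D'}$ this shows $H_D\ge H_{\Omega,q}$ in all cases.

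The one genuinely delicate point is that $H_{A+B}$ enters with a minus sign, so for $r>q$ the nonnegative defect $H_A-H_{A_0}$ is partly cancelled by the term $-(H_{A+B}-H_{J_0})$, whose sign I cannot control; a termwise comparison fails here. I would dispose of this case by monotonicity rather than by computation: since $H_D\ge H_{D'}\ge H_{\Omega,q}$, it suffices to prove $H_D\ne H_{D'}$, and for that it is enough to exhibit a single element of $I_{D'}\setminus I_D$. The monomial $y_1\dotsm y_q\,x_{m,1}$ (here $c_m\ge 1$ since $m\ge 2$) lies in $A_0$, because $y_1\dotsm y_q\in A_0$, and in $B$, so it lies in $I_{D'}=A_0\cap B$; but it is divisible neither by $y_1\dotsm y_r$ (as $r>q$) nor by any generator of $\bigcap_{i<m}(x_{i,\bullet})$ (it involves no variable $x_{i,j}$ with $i<m$), hence it is not in $A$, and so not in $I_D$. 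Thus $I_D\subsetneq I_{D'}$, and the monotonicity fact gives $H_D\gneq H_{D'}\ge H_{\Omega,q}$. In every failing case we therefore obtain $H_D\gneq H_{\Omega,q}$, in particular $H_D\nleq H_{\Omega,q}$, which is the assertion.
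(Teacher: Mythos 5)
Your central tool, the ``additivity'' $H_{A\cap B}=H_A+H_B-H_{A+B}$, is false in general, and this is a genuine gap. The short exact sequence $0\to R/(A\cap B)\to R/A\oplus R/B\to R/(A+B)\to 0$ is correct, but Hilbert--Samuel functions are not additive over it: $H_{A\cap B}(k)$ is the length of $R/((A\cap B)+\fm^{k+1})$, whereas the truncated exact sequence only computes the length of $R/\bigl((A+\fm^{k+1})\cap(B+\fm^{k+1})\bigr)$, and the ideal $(A+\fm^{k+1})\cap(B+\fm^{k+1})$ can strictly contain $(A\cap B)+\fm^{k+1}$. Concretely, in $R=\IK\llbracket x,y\rrbracket$ take $A=(x)$, $B=(x+y^2)$: then $A\cap B=(x^2+xy^2)$, $A+B=(x,y^2)$, and for $k\geq1$ one has $H_{A\cap B}(k)=2k+1$ while $H_A(k)+H_B(k)-H_{A+B}(k)=2k$ (for $k=2$ the element $xy=y(x+y^2)-y^3$ lies in both truncations but not in $(A\cap B)+\fm^3$). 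What holds in general is only the termwise inequality $H_{A\cap B}\geq H_A+H_B-H_{A+B}$. Nor can you hope the identity is automatically true for the ideals at hand: already for $m=2$, $c_1=c_2=1$, $A=(x_1,y)$, $B=(x_2,y+w^2)$ (so $f=y+w^2\notin J$), the element $yw^{k-1}=w^{k-1}(y+w^2)-w^{k+1}$ lies in both truncations but not in $I_D+\fm^{k+1}$, so equality fails within the lemma's own setting.

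Where this bites is exactly the ``if'' direction. Your treatment of the ``moreover'' half survives, because there the valid inequality points the right way: $H_D\geq H_{D'}\geq H_{A_0}+H_B-H_{J_0+(f)}\geq H_{A_0}+H_{B_0}-H_{J_0}=H_{\Omega,q}$ holds termwise (your monomial identities, the monotonicity fact, and the observation $y_1\dotsm y_q x_{m,1}\in I_{D'}\setminus I_D$ for $r>q$ are all correct), so in each failing case you do get $H_D\gneq H_{\Omega,q}$ and hence $H_D\nleq H_{\Omega,q}$. But in the good case ($r=q$, $\ord f=q$, $f\in J$) you must prove the reverse inequality $H_D\leq H_{\Omega,q}$, and your argument, once the false identity is replaced by the true inequality, delivers only $H_D\geq H_{\Omega,q}$. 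That upper bound is precisely the hard content of the lemma, and it is what the paper's Part~B supplies: choosing $f\in\bigcap_{i<m}I_i+(y_1\dotsm y_q)$, writing $I_D=\bigcap_{i\leq m}I_i+I_m\cdot(y_1\dotsm y_q)+(f)$, showing $\mon(I_D)=\bigcap_{i\leq m}I_i+I_m\cdot(y_1\dotsm y_q)+(\mon(f))$, and then a degree-by-degree count (a bijection between the monomials contributed by $\mon(f)$ and those contributed by $I_m\cdot(y_1\dotsm y_q)$) giving $H_{\mon(I_D)}=H_{\Omega,q}$. Your proposal has no substitute for this step; repairing it would require proving the truncation compatibility $(A_0+\fm^{k+1})\cap(B+\fm^{k+1})=(A_0\cap B)+\fm^{k+1}$ in the good case, which is a statement of essentially the same depth as the computation you are trying to avoid.
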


\begin{remark}
It follows immediately from the conclusion of the lemma that $H_{D}\not< H_{\Omega,q}$ at a point in $\Sigma_{\Omega,q}$.
\end{remark}

\begin{proof}[Proof of Lemma \ref{lem:HSlemma}]

First we give a more precise description of the ideal $I_D$. Write $I_i:=(x_{i,1},\ldots,x_{i,c_i})$, $i=1,\ldots,m$. Let $K\subset\{1,2,\ldots,m-1\}\times\{1,2,\ldots,r\}$ denote the set of all $(i,j)$ such that $f\in I_m+I_i+(y_j)$. If $(i,j)\in K$, then
any element of $I_m+(f)$ belongs to the ideal $I_m+I_i+(y_j)$. Set 
$G:=\cap_{(i,j)\in K}(I_i+(y_j))$ and $H:=\cap_{(i,j)\notin K}(I_i+(y_j))$ 
(where the intersections are taken to be the local ring $\cO_{Z,a}$ if the index set is empty); note that these are the prime decompositions. Then any element of $I_m+(f)$ belongs to 
$\cap_{(i,j)\in K}(I_m+I_i+(y_j))=I_m+G$. Therefore we can take $f\in G$. Observe that we still have $f\notin I_i+(y_j)$ for $(i,j)\notin K$. By a computation the same as in 
\cite[Proof of Lemma 5.7]{BV}, replacing $x_i, p$ in the latter by $I_i, m$ (respectively)
here, we get:
\begin{equation}\label{HSequation}
I_D=I_m\cdot\left[H\cap G\right]+H\cdot(f).
\end{equation}

The remainder of the proof is also quite similar to the hypersurface case treated in
\cite[Proof of Lemma 5.7]{BV}, but we include it because it is not a direct translation 
as above. In particular, the diagrams of initial exponents here are more complicated. 

We can pass to the completion of $\cO_{Z,a}$ because this does not change
the Hilbert-Samuel function, the order of $f$ or ideal membership.
So we assume we are working in a formal power series ring, where $\{x_{i,j}\}_{1\leq i\leq m,\ 1\leq j\leq c_i}$, $\{y_i\}_{1\leq i\leq r}$, $\{w_i\}_{1\leq i\leq n-|c|-q}$ are the indeterminates. 
For simplicity, we
use the same notation for ideals and their generators before and after completion.

We can compute the Hilbert-Samuel function $H_D$ using the diagram of initial exponents
$\cN(I_D)$. The latter should be compared to the diagram of the ideal
$\cap_{i=1}^{m}I_i+(y_1\dotsm y_q)$, whose Hilbert-Samuel function is $H_{\Omega,q}$.
\medskip

\noindent\textbf{A.} First we show that $H_D\nleq H_{\Omega,q}$ in the following three cases:\medskip

\noindent\textbf{Case 1.} $H\neq(1)$ or $\ord\,f>q$. Then all elements of $H\cdot(f)$ have order $>q$. Moreover, all elements of
\[
I_m\cdot[G\cap H]=I_m\cdot\left(\bigcap_{i=1}^{m-1}I_i+(y_1\dotsm y_r)\right)
\]
of order less than $q+1$ have initial monomials in $\cN(\cap_{i=1}^{m}I_i)$.

It follows that, if $H\neq(1)$ (i.e., $f\notin\cap_{i=1}^{m-1}I_i+(y_1\dotsm y_r)$)
or if $\ord\,f>q$, then $H_D\nleq H_{\Omega,q}$. In fact, in $\cN(I_D)$, below degree $q+1$, we have at most the vertices of $\cN(\cap_{i=1}^{m}I_i)$, while in 
$\cN(\cap_{i=1}^{m}I_i+(y_1\dotsm y_q))$, below degree $q+1$, we have also the vertex corresponding to the monomial $y_1\dotsm y_q$.
\medskip

\noindent\textbf{Case 2.} $H=(1)$ (i.e., $f\in\cap_{i=1}^{m-1}I_i+(y_1\dotsm y_r)$), $\ord\,f=q$ and $r>q$. Then $\mon(f) \in \cap_{i=1}^{m-1}I_i+(y_1\dotsm y_r)$, after perhaps adding
an element of $I_m$ to $f$. A simple computation shows that
\[
\mon(I_D) = I_m\cdot\left(\bigcap_{i=1}^{m-1}I_i+(y_1\dotsm y_r)\right)+ (\text{mon}(f)).
\]
This follows from the fact that cancelling the initial monomial of $f$ using elements of 
$I' := I_m\cdot(\cap_{i=1}^{m-1}I_i+(y_1\dotsm y_r))$ leads to a function whose initial monomial is already in $I'$. In fact, $f\in\cap_{i=1}^{m-1}I_i+(y_1\dotsm y_r)$ but, since $\ord(f)=q$, then $\text{mon}(f)\in\cap_{i=1}^{m-1}I_i$. This means that to eliminate the initial monomial of $f$, we multiply $f$ by an element of $I_m$ and then subtract an element of $I'$. This results again in an element of $I'$, and
therefore contributes no new vertices to $\cN(I_D)$.

It follows that $H_{D}\nleq H_{\Omega,q}$. In fact, since 
$\text{mon}(f)\in\cap_{i=1}^{m-1}I_i$, there exists $b\in\cap_{i=1}^{m}I_i$ (any $b$ that is not relatively prime to $\text{mon}(f)$) such that there are points in $\cN(I_D)$ that correspond to monomials that are multiples of both $\text{mon}(f)$ and $b$, but not of $\text{mon}(f)\cdot b$. This implies that, in degree $\deg(\lcm(\mon(f),b))=q+1$, there are fewer vertices in 
$\cN(I_D)$ than $\cN(\cap_{i=1}^{m}I_i+(y_1\dotsm y_q))$; therefore $H_{D}(q+1)>H_{\Omega,q}(q+1)$.
\medskip

\noindent\textbf{B.} Secondly, we show $H_D=H_{\Omega,q}$, assuming that $H=(1)$ (i.e., $f\in\cap_{i=1}^{m-1}I_i+(y_1\dotsm y_r)$), $\ord\,f=q$ and $r=q$. The first assumption implies that
\begin{equation}\label{eq:ideal}
I_D=\bigcap_{i=1}^{m}I_i+I_m\cdot(y_1\dotsm y_q)+(f).
\end{equation}
Therefore, either $\mon(f) = y_1y_2\dotsm y_q$ or $\mon(f) \in \cap_{i=1}^{m-1}I_i$.

In both cases, by the same argument as in Case 2 above, 
\begin{equation*}
\mon(I_D) = \bigcap_{i=1}^{m}I_i+I_m\cdot(y_1\dotsm y_q)+(\mon(f)).
\end{equation*}

We want to prove that $H_{\text{mon}(I_D)}=H_{\Omega,q}$. If $\mon(f) = y_1y_2\dotsm y_q$,
then $H_{\text{mon}(I_D)}=H_{\Omega,q}$, by the definition of $H_{\Omega,q}$. 
On the other hand, if $\mon(f) \in \cap_{i=1}^{m-1}I_i$, then 
the Hilbert-Samuel function of $I'' := \cap_{i=1}^{m}I_i+(\text{mon}(f))$ is larger than 
$H_{\Omega,q}$ because, for each monomial $b$ representing a vertex of 
$\cN(\cap_{i=1}^{m}I_i)$ that is not relatively prime to $\text{mon}(f)$, the monomials that are multiples of both $\text{mon}(f)$ and $b$ are not only those that are multiples of $\text{mon}(f)\cdot b$. We will count the additional monomials (for each degree), and show that this number  
equals the number of monomials in $I_m\cdot(y_1\dotsm y_q)$ that are not already in $I''$; i.e., the number of points of 
$\cN(I''+I_m\cdot(y_1\dotsm y_q))$ additional to those
of $\cN(I'')$.

Write a representative of a vertex of $\cN(\cap_{i=1}^{m}I_i)$ that is not relatively prime to 
$\text{mon}(f)$ as $ax_{m,i}b$, where $\text{mon}(f)=ac$ and $x_{m,i}b,c$ are relatively prime. The monomials to be counted are of the form $ax_{m,i}bcM=\text{mon}(f)x_{m,i}bM$, 
for some monomial $M\notin\cap_{i=1}^{m-1}I_i$. Now, $y_1\dotsm y_q x_{m,i}M\in\text{mon}(I_D)$ has the same degree as $ax_{m,i}bcM$, but does not lie in $\cup_{i=1}^{m-1}I_i+(\text{mon}(f))$. This implies that, in each degree, $\cN(I_D)$ and 
$\cN(I''+I_m\cdot(y_1\dotsm y_q))$ have the same number of points. Therefore $H_D=H_{\text{mon}(I_D)}=H_{\Omega,q}$. This completes the proof of Lemma \ref{lem:HSlemma}.
\end{proof}

\begin{corollary}\label{cor:HSlemma}
In the setting of Lemma \ref{lem:HSlemma}, if there exists $q'$ such that $H_{\Omega,q'}\geq H_{\Supp D,a}$ at $a\in \Sigma_{\Omega,q}$, then $H_{\Omega,q'}\geq H_{\Omega,q}$.
If, moreover, $q'=q$, then $H_{\Omega,q}=H_{\Supp D,a}$.
\end{corollary}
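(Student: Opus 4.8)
The plan is to deduce both assertions from just two ingredients: the monotonicity of the assignment $q \mapsto H_{\Omega,q}$ (with $\Omega$ fixed), and the remark following Lemma \ref{lem:HSlemma}, which says that $H_{\Supp D,a} \not< H_{\Omega,q}$ at a point $a \in \Sigma_{\Omega,q}$. The only step with genuine content is establishing monotonicity; everything else is order-theoretic bookkeeping in the partial order of Definition \ref{definitionorderHSfunction}.

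First I would show that the family $\{H_{\Omega,q}\}_{q}$ is a chain, i.e. any two of its members are comparable. In the power series ring of Definition \ref{def:Hpq}, the generator factors as $y_1\cdots y_{q+1} = y_{q+1}\cdot(y_1\cdots y_q)$, so it lies in $(y_1\cdots y_q)$, whence
\[
\bigcap_{i=1}^{n}(x_{i,1},\ldots,x_{i,c_i},\,y_1\cdots y_{q+1})
\ \subseteq\
\bigcap_{i=1}^{n}(x_{i,1},\ldots,x_{i,c_i},\,y_1\cdots y_q).
\]
Since a smaller ideal has a pointwise-larger Hilbert-Samuel function, this gives $H_{\Omega,q} \leq H_{\Omega,q+1}$ for every $q$, and therefore $H_{\Omega,q'}$ and $H_{\Omega,q}$ are comparable for all $q,q'$.

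For the main claim I would argue by contradiction. Suppose $H_{\Omega,q'} \geq H_{\Supp D,a}$ but $H_{\Omega,q'} \not\geq H_{\Omega,q}$. Because the $H_{\Omega,\cdot}$ form a chain, $H_{\Omega,q'}$ and $H_{\Omega,q}$ are comparable, so $\not\geq$ forces $H_{\Omega,q'} < H_{\Omega,q}$; combining with the hypothesis yields $H_{\Supp D,a} \leq H_{\Omega,q'} < H_{\Omega,q}$, i.e. $H_{\Supp D,a} < H_{\Omega,q}$, contradicting the remark after Lemma \ref{lem:HSlemma}. Hence $H_{\Omega,q'} \geq H_{\Omega,q}$. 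For the ``moreover'' clause I would simply specialize to $q'=q$: the hypothesis then reads $H_{\Supp D,a} \leq H_{\Omega,q}$, while the remark gives $H_{\Supp D,a} \not< H_{\Omega,q}$, and these two together force $H_{\Supp D,a} = H_{\Omega,q}$. I do not anticipate any real obstacle here; the chain property in the first step is the whole point, and once it is in hand the argument is purely formal.
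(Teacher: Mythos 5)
Your proof is correct, and it takes a genuinely different route from the paper's. The paper re-enters the diagram-of-initial-exponents computation from the proof of Lemma \ref{lem:HSlemma}: it passes to the completion, writes $I_{D}=\bigcap_{i=1}^{m}I_i+I_m\cdot(y_1\dotsm y_r)+(f)\cdot H$, compares $\cN(I_D)$ with $\cN\left(\bigcap_{i=1}^{m}I_i+(y_1\dotsm y_{q'})\right)$, and extracts the numerical inequality $q'\geq\min(r+1,\ord((f)\cdot H))$; this yields the first assertion, and for $q'=q$ it forces $H=(1)$ and $\ord f=q$, so that part B of the lemma's proof gives $H_{\Supp D,a}=H_{\Omega,q}$. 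You bypass that computation entirely: your one new ingredient is that, for fixed $\Omega$, the ideals of Definition \ref{def:Hpq} are nested as $q$ grows (they all live in the same power series ring, which depends only on $\Omega$), so the functions $H_{\Omega,q}$ form a chain; combined with the remark following Lemma \ref{lem:HSlemma} (namely $H_{\Supp D,a}\not< H_{\Omega,q}$ at points of $\Sigma_{\Omega,q}$), both assertions follow by formal bookkeeping in the partial order of Definition \ref{definitionorderHSfunction}, and your case analysis there is airtight. In fact your chain property makes explicit a step the paper leaves implicit: its passage from \eqref{eq:forcor} to $H_{\Omega,q'}\geq H_{\Omega,q}$ is precisely monotonicity of $q\mapsto H_{\Omega,q}$. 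What the paper's computation buys in exchange is quantitative information --- the inequality \eqref{eq:forcor} and the identification of the conditions $H=(1)$, $\ord f=q$ in the equality case --- which meshes with how the lemma is exploited elsewhere. One caveat: your argument leans entirely on the remark, whose validity rests on the dichotomy in Lemma \ref{lem:HSlemma} being exhaustive (i.e., on $r\geq q$ and $\ord f\geq q$ holding in the lemma's setting); since the paper asserts the remark as an immediate consequence of the lemma, this reliance is legitimate, but it is worth recognizing that this is where the real content of your proof is imported from.
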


\begin{proof}
As in the proof of Lemma \ref{lem:HSlemma}, we pass to the completion of $\cO_{Z,a}$.
We have
\begin{equation}\label{eq:formulaforID}
I_{D}=\bigcap_{i=1}^{m}I_i+I_m\cdot(y_1\dotsm y_r)+(f)\cdot H,
\end{equation}
with $H$ as in the proof of the lemma.
Recall that $r\geq q$ and $\ord\,f\geq q$. In the right hand side of \eqref{eq:formulaforID}, the first two terms are generated by monomials of degrees $m$ and $r+1$, respectively, while the last term is an ideal of order at least $q+1$. We compare $\cN(I_D)$ with $\cN(I)$, where
\begin{equation}
 I:=\bigcap_{i=1}^{m}I_i+(y_1\dotsm y_{q'})
\end{equation}
and where we assume $H_I=H_{\Omega,q'}$. Then $\cN(I)$ has the same vertices in degree 
$m$ as $\cN(\bigcap_{i=1}^{m}I_i)$, and these vertices are the same as those of
$\cN(I_D)$ in degree $m$. In addition, $\cN(I)$ has a vertex in degree $q'$. Since $H_{\Omega,q'}\geq H_{\Supp D,a}$ we have 
\begin{equation}\label{eq:forcor}
q'\geq\min(r+1,\ord ((f)\cdot H)).
\end{equation}
This implies that $H_{\Omega,q'}\geq H_{\Omega,q}$.

If, moreover, $q'=q$, then \eqref{eq:forcor} implies that $H=(1)$ and $\ord(f)=q$. As at the end of the proof of the lemma, it follows that $H_{\Supp D,a}=H_{\Omega,q}$.
\end{proof}

\begin{proof}[\bf{Proof of Proposition \ref{lemmafactorssnc}}]
In (1), the ``only if'' direction is obvious. Suppose that $(X,D)$ is stable-snc at $a$.
Then $(X,D,E)$ is stable-snc at $a$ if and only if $D|_Z + E|_Z$, where $Z$ denotes the intersection of the components of $X$ at $a$, is an snc divisor on $Z$. Since $(X,D)$ is 
stable-snc at $a$, the restriction of $D$ to $Z$ is the same as that of $D^{m-1}$. But, if
$(X^{m-1},D^{m-1},E,X^{(m)})$ is stable-snc at $a$, then $D^{m-1}|_Z + E|_Z$ is
an snc divisor.

For (2), first assume that $(X,D)$ is stable-snc at $a$. Then (a) is obvious.
The ideal of $\Supp\,D$ has the form $\cap_{i=1}^{m}(I_i+(y_1\dotsm y_q))$,
where $I_i:=(x_{i,1},\ldots,x_{i,c_i})$, $i=1,\ldots, m$,
in suitable coordinates for a minimal embedding variey $Z$ of $X$ at $a=0$ (recall that $D$ is reduced). Then (b) follows and, for (c), we compute
 \[
 J_{a}=\bigcap_{1\leq i\neq j\leq m}[(I_i+I_j+(y_1\dotsm y_q)):(I_i+I_j+(y_1\dotsm y_q)]=\mathcal{O}_{X,a}.
 \]
 
Conversely, assume the conditions (a)--(c). By (a), there is a system of coordinates $\{x_{i,j}\}_{1\leq i\leq m,\ 1\leq j\leq c_i}$, $\{y_i\}_{1\leq i\leq q}$, $\{z_i\}_{1\leq i\leq n-|c|-q}$ for $Z$ at 
$a$, in which $X^{(m)}=(x_{m,1}=\ldots=x_{m,c_m}=0)$ and $\Supp D$ is defined by the ideal
\[
I_D=\left(I_m+(f)\right)\cap\bigcap_{i=1}^{m-1}\left(I_i+\left(y_1\dotsm y_q\right)\right).
\]
By (b) and Lemma \ref{lem:HSlemma}, we can choose $f\in\cap_{i=1}^{m-1}\left(I_i+\left(y_1\dotsm y_q\right)\right)+I_m$, and therefore we can choose $f\in\cap_{i=1}^{m-1}\left(I_i+\left(y_1\dotsm y_q\right)\right)=\cap_{i=1}^{m-1}I_i+\left(y_1\dotsm y_q\right)$. Write $f$ in the form $f=g_1+y_1\dotsm y_q g_2$, where $g_1\in\cap_{i=1}^{m-1}I_i$. Then
\begin{equation*}
\begin{aligned}
 J_{a}&=\bigcap_{i=1}^{m-1}[(I_m+I_i+(f)):(I_m+I_i+y_1\dotsm y_q)]\\
   &=\bigcap_{i=1}^{m-1}[(I_m+I_i+(y_1\dotsm y_q g_2)):(I_m+I_i+(y_1\dotsm y_q))]\\
   &=\bigcap_{i=1}^{m-1}\left(I_m+I_i+(g_2)\right).
 \end{aligned}
\end{equation*}
Since no component of $D$ lies in $\Sing X$, then $g_2\notin I_m+I_i$, $i=1,\ldots,m-1$. Therefore,
$J_{a}=I_m+ (g_2)+\cap_{i=1}^{m-1}I_i$.

The condition $J_{a}=\mathcal{O}_{Y,a}$ means that $g_2$ is a unit. Then
\begin{align*}
I_D&=\left[\bigcap_{i=1}^{m-1}I_i+(y_1\dotsm y_qg_2)\right]\cap(I_m+(f))\\
 &=\left[\bigcap_{i=1}^{m-1}I_i+(g_1+y_1\dotsm y_qg_2)\right]\cap(I_m+(f))\\
 &=\left[\bigcap_{i=1}^{m-1}I_i+(f)\right]\cap(I_m+(f)).
\end{align*}
Since no component of $D$ lies in $\Sing X$, then $f\notin I_m+I_i$, for every $i=1,\ldots,m-1$. Therefore,
$I_D=\cap_{i=1}^{m}I_i+(f)$.

By Lemma \ref{lem:HSlemma}, since $a\in\Sigma_{\Omega,q}$, $\ord\,f=q$. It follows that
$f|_{V(I_m)}$ is a product $f_1\dotsm f_q$ of $q$ irreducible factors each of order one. For 
each $i=1,\ldots,q$, set $A_i:=\{(j,k): f_i\in I_j+(y_k)|_{V(I_m)},\ j\leq m-1,\ k\leq q\}$. Then $f_i\in\cap_{(j,k)\in A_i}(I_j+(y_k))|_{V(I_m)}$, where the intersection is understood to be the entire local ring if $A_i = \emptyset$. Note that $\cup_i A_i=\{(j,k):\ j\leq m-1,\ k\leq q\}$,
since $f\in\cap_{i=1}^{m-1}(I_i+(y_1\dotsm y_q))$.

We will extend each $f_i$ to a regular function on $Z$ (still denoted $f_i$) preserving the condition that $f_i\in\cap_{(j,k)\in A_i}(I_j+(y_k))$. In fact, $\cap_{(j,k)\in I_i}(I_j+(y_k))|_{V(I_m)}$ is generated by a finite set of monomials $\{m_r\}$ in the
$x_{\alpha,\beta}|_{V(I_m)}$ and $y_k|_{V(I_m)}$. Then $f_i$ is a combination $\sum m_ra_r$. So we can get an extension of $f_i$ as desired, using arbitrary
extensions of the $a_r$ to regular functions on $Z$. This means we can assume that $f=f_1\dotsm f_q\in\cap_{i=1}^{m-1}I_i+(y_1\dotsm y_q)$ (using the extended $f_i$).

Since $f|_{V(\sum_{i=1}^{m-1}I_i)}=y_1\ldots y_q g_2$, where $g_2$ is a unit, it follows that
$f=y_1\ldots y_q g_2$ mod $\sum_{i=1}^{m-1}I_i$, where $g_2$ is a unit. Since 
$I_D=\cap_{i=1}^{m}I_i+(f)$, it remains to check only that $\{x_{i,j}\}_{1\leq i\leq m,\ 1\leq j\leq c_i}$, $f_1,\ldots,f_q$ are part of a coordinate system. We can pass to the completion of
$\cO_{Z,a}$, which we identify with a ring of formal power series in variables
including $\{x_{i,j}\}_{1\leq i\leq m,\ 1\leq j\leq c_i}$, $\{y_i\}_{1\leq i\leq q}$. It is enough to prove that the images of the
$f_i$ and $x_{i,j}$ in $\hat{m}/\hat{m}^2$ are linearly independent, where $\hat{m}$ is the maximal ideal of the completed local ring. 
If we put $x_{i,j}=0$ for every $(i,j)$ in the power series representing each $f_i$ we get
\[
(f_1\dotsm f_q)|_{V(\sum_{i=1}^{m}I_i)}=y_1\dotsm y_q.
\]
This means that, after reordering the $f_i$, each $f_i|_{V(\sum_{i=1}^{m}I_i)}\in(y_i)$, and the desired conclusion follows.
 \end{proof}

\section{Algorithm for the main theorem}\label{sec:maintheorem}
In this section we prove Theorem \ref{thm:maintriples}. The proof will depend on the results 
given in Sections \ref{sec:desatsinglocusofX} and \ref{sec:fixingmultiplicities} following. We divide the proof into several steps or subroutines each of which specifies certain blowings-up.
\medskip

\noindent
\textbf{Step 1.} \emph{Make $(X,E)$ stable-snc.} This is an application of Theorem \ref{thm:mainDEqualZero}. The blowings-up involved preserve stable-snc singularities of $(X,E)$ and therefore also of $(X,D,E)$. As a result of Step $1$,
we can assume that $(X,E)$ is stable-snc.
\medskip

In the following steps, all blowings-up will be both admissible and snc with respect to $X$,
to preserve the property that $(X,E)$ is stable-snc.
\medskip

\noindent
\textbf{Step 2.} \emph{Remove irreducible components of $D$ lying in $\Sing X$ or $\Supp E$.} Given a triple $(X,D,E)$, consider the union $\mathcal{Z}$ of the supports of the (irreducible) components of $D$ lying in $\Sing X\cup\Supp E$. Any such component is a component
either of the intersection of two components of $X$, or of the intersection of a component
of $X$ and a component of $E$. Therefore, $\mathcal{Z}$ is snc, in general with components of different dimensions. Blowings-up as needed can simply be given by the usual desingularization of $\mathcal{Z}$, followed by blowing up the final strict transform.

The point is that, locally, there is a smooth ambient variety, with coordinates $(x_1,\ldots,x_p,\ldots,x_n)$ in which each component of $\mathcal{Z}$ is of the form $(x_{i_1}=\ldots=x_{i_k}=0)$, $i_1<\ldots<i_k\leq p$. Let $C$ denote the set of irreducible components of intersections of arbitrary subsets of components of $\mathcal{Z}$. Elements of $C$ are partially ordered by inclusion, and are
snc with respect to $X$ and $E$. 
Desingularization of $\mathcal{Z}$ involves blowing up elements of $C$ starting with the smallest, until all components of $\mathcal{Z}$ are separated. Then blowing up the final (smooth) strict transform removes all components of $\mathcal{Z}$.

As a result of Step 2, we can assume that no component of $D$ lies in
$\Sing X$ or in $\Supp E$.
\medskip

\noindent
\textbf{Step 3.} \emph{Make $(X,D_\text{red},E)$ stable-snc} (i.e., transform $(X,D,E)$ by the blowings-up needed to make $(X,D_\text{red},E)$ stable-snc). The algorithm for Step $3$ is given following Step $4$ and the paragraph on functoriality below.
\medskip

We can therefore now assume that $(X,D_{\text{red}},E)$ is stable-snc and that $D$ has no irreducible components in $\Sing X$ or $\Supp E$.
\medskip

\noindent
\textbf{Step 4.} \emph{Make $(X,D,E)$ stable-snc.} A simple combinatorial argument for Step $4$ will be given in Section \ref{sec:fixingmultiplicities}. This completes the algorithm.
\medskip

\noindent
\textbf{Functoriality.} (See also \cite[Sect.\,9]{BV}.) The steps above involve several
applications of the general desingularization algorithm. Beginning with a local \'etale
invariant $\io$ (e.g., the Hilbert-Samuel function), the centres of blowing up are determined 
by a corresponding \'etale invariant $\inv_{\io}$ defined recursively over a sequence
of admissible blowings-up. The monomial marked ideals used in cleaning (Section 4) are 
\'etale-invariant. The obstruction ideal $J(X,D)$ (Section 6) is an invariant of \'etale
morphisms preserving the number of irreducible components of $X$ at every point
(see Remark \ref{rem:functJ}).
The functoriality assertion of the theorem follows because the blowing-up sequence 
given by the four steps above depends, at a given point, only on the
preceding objects and the desingularization invariant, as well as the number of components of 
$X$ and $D$, and their codimensions in a local minimal embedding variety.
\medskip

\noindent
\textbf{Algorithm for Step 3.} The input is a triple $(X,D,E)$, where $(X,E)$ is stable-snc, $D$ is reduced and no irreducible component of $D$ lies in $\Sing X \cup \Supp E$. We will argue by induction on the number of components of $X$. Since $D$ is reduced, we make no distinction between $D$ and $\Supp D$. The algorithm for Step 3 is given in the proof of Theorem \ref{thm:fortriples} below, applied to the $4$-tuple $(X,D,E,\emptyset)$.
 
\begin{theorem}\label{thm:fortriples}
Assume that $(X,D,E,Y)$ is a $4$-tuple as in Definition \ref{def:4tuples}, such that 
$(W:=X\cup Y,E)$ is stable-snc, and $D$ is a reduced Weil divisor on $X$ 
with no component in 
$\Sing W \cup \Supp E$. Then there is a morphism $\tau :W'\rightarrow W$ given by a 
composite of admissible smooth blowings-up whose centres are snc with respect
to $W$, such that:
\begin{enumerate}
\item Each blowing-up is an isomorphism over the stable-snc points of its target $4$-tuple.
\item The transform $(X',D',E',Y')$ of $(X,D,E,Y)$ by $\tau$ is everywhere stable-snc.
\end{enumerate}
\end{theorem}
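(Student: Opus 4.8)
The plan is to argue by induction on the number $m$ of irreducible components of $X$, the induction being driven by the characterization of stable-snc points in Proposition \ref{lemmafactorssnc}. The inert subvariety $Y$ is carried along passively: since $(W,E)=(X\cup Y,E)$ is stable-snc to start with, and every centre will be chosen snc with respect to $W$ and contained in the intersection of the components of $X$ through each point (cf. Remark \ref{rem:varfunct}(2)), the pair $(W,E)$ --- in particular $(X,E)$ --- stays stable-snc and every blowing-up is admissible. In the base case $m=1$, $X=X^{(1)}$ is smooth and irreducible, and stable-snc of the $4$-tuple just means $D+E|_X$ is an snc divisor on $X$ transverse to $Y$; since $X$ is already transverse to $Y$ and to the components of $E$, it suffices to log-resolve $D$ on the smooth $X$ by centres snc with respect to $W$, which is Theorem \ref{thm:mainDEqualZero} (equivalently \cite[Thm.\,1.4]{BDV}), each blowing-up being an isomorphism over the snc (hence stable-snc) locus by Theorem \ref{thm:char} (with $c_1=0$).

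For $m\geq2$, order the components and set $X^{m-1}=X^{(1)}\cup\cdots\cup X^{(m-1)}$ and $D^{m-1}=D|_{X^{m-1}}$. I would first apply the induction hypothesis to the $4$-tuple $(X^{m-1},D^{m-1},E,Y\cup X^{(m)})$, whose active variety has $m-1$ components, to make it everywhere stable-snc; the last component $X^{(m)}$ and the divisor $D_m=D|_{X^{(m)}}$ are transformed along as inert data. By part (1) of Proposition \ref{lemmafactorssnc}, stable-snc of $(X,D,E)$ at a point forces stable-snc of $(X^{m-1},D^{m-1},E,X^{(m)})$ there, so these recursive blowings-up are isomorphisms over the stable-snc locus of the full $4$-tuple, giving condition (1). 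Afterwards we may assume condition (a) of Proposition \ref{lemmafactorssnc}(2) holds at every point of $X^{m-1}\cap X^{(m)}$.

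It then remains to make the full triple stable-snc along $X^{(m)}$, by cases at $a\in X^{(m)}$. Outside $\Supp D$ the condition reduces to stable-snc of $(X,E)$, which holds; if $a$ lies on $X^{(m)}$ but on no other component of $X$, I would apply Theorem \ref{thm:char} to $(X^{(m)},\Supp D)$, log-resolving $D_m$ on the smooth $X^{(m)}$ as in the base case. The essential case is $a\in X^{m-1}\cap X^{(m)}$ lying in two components of $D$, one in $X^{(m)}$ and one in some $X^{(i)}$, $i\neq m$: here Proposition \ref{lemmafactorssnc}(2) reduces stable-snc, given (a), to condition (b) --- that $H_{\Supp D,a}$ attain its minimal value $H_{\Omega,q}$ on $\Sigma_{\Omega,q}$ (Definitions \ref{def:sigmapq}, \ref{def:Hpq}) --- and condition (c) --- that the obstruction ideal $J$ of Definition \ref{definitionJ} be trivial. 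I would achieve (b) by blowing up the locus $(H_{\Supp D}>H_{\Omega,q})$ using the desingularization invariant built on $H_{\Supp D}$ (as in Step 3 of the proof of Theorem \ref{thm:mainDEqualZero}, after making the embedding dimension locally constant on each $\Sigma_{\Omega,q}$); by Corollary \ref{cor:HSlemma} one has $H_{\Supp D}\geq H_{\Omega,q}$ on $\Sigma_{\Omega,q}$ with equality exactly at the normal form of Lemma \ref{lem:HSlemma}, so these centres carry no stable-snc point. I would then achieve (c) by the cleaning-type blowings-up of Proposition \ref{prop:desingOfJ} (the analogue of \cite[Sect.\,7]{BV}); since $J_a=\cO_{X,a}$ at every stable-snc point, the support of $J$ again carries no stable-snc point. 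The detailed execution is the content of Sections \ref{sec:desatsinglocusofX} and \ref{sec:fixingmultiplicities}.

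The main obstacle I anticipate is the coordination and termination of this last pair of blowing-up blocks: one must verify that eliminating $J$ neither raises $H_{\Supp D}$ back above $H_{\Omega,q}$ nor destroys condition (a) secured by the recursion, and conversely that the Hilbert-Samuel reduction does not reintroduce a nontrivial $J$. This requires combining the compatibility results of Sections \ref{sec:desatsinglocusofX} and \ref{sec:fixingmultiplicities} with the fact that both families of centres avoid all stable-snc points, so as to exhibit a single invariant that strictly decreases under the procedure. Termination of this invariant then gives, after finitely many steps, the simultaneous validity of (a), (b) and (c), whence Proposition \ref{lemmafactorssnc} shows that the transform $(X',D',E',Y')$ is everywhere stable-snc.
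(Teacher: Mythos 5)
Your overall architecture --- induction on the number $m$ of components, applying the inductive hypothesis to $(X^{m-1},D^{m-1},E,Y\cup X^{(m)})$ and using Proposition \ref{lemmafactorssnc} to reduce the remaining work to $X^{(m)}$ --- is the same as the paper's, but two of your steps fail as written. The first is the base case $m=1$ (and, identically, your treatment of points of $X^{(m)}$ lying on no other component of $X$): resolving $D$ alone is not enough, because stable-snc of the $4$-tuple couples $D$ with the trace of the inert variety $Y$, not only with $X$ and $E$. Concretely, take $X=(z=0)$, $Y=(x=0)$ in $\mathbb{A}^{3}_{x,y,z}$, $E=0$, $D=(z=x-y^{2}=0)$. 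All hypotheses of Theorem \ref{thm:fortriples} hold, and $D$ is a smooth, hence snc, divisor on the smooth surface $X$, so your log-resolution of $D$ is the identity; yet $(X,D,0,Y)$ is not stable-snc at the origin: any function $y'$ vanishing on $D$ lies in the ideal $(z,x-y^{2})$, so its differential at $0$ lies in the span of $dx$ and $dz$, and hence there is no coordinate system $(x',y',z')$ with $(z'=0)=X$, $(x'=0)=Y$ and $D=(y'=0)|_{X}$, whatever divisor $D_Y$ on $Y$ one adds. The paper avoids this by applying Theorem \ref{thm:mainDEqualZero} not to $D$ but to the \emph{variety} $D\cup Y|_{X}$ (with divisor $E|_X$), and likewise in its Step B to $(D|_U\cup Y|_U,E)$ on $U=X^{(m)}\setminus X^{m-1}$; desingularizing $D$ together with $Y|_X$ is what produces the required compatibility, and in the example it correctly blows up the origin to separate the tangential curves $(x-y^2=0)$ and $(x=0)$ inside $X$.

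The second gap is your mechanism for condition (b) of Proposition \ref{lemmafactorssnc}. You claim, citing Corollary \ref{cor:HSlemma}, that $H_{\Supp D}\geq H_{\Omega,q}$ on $\Sigma_{\Omega,q}$ with equality exactly in the normal form, and you propose to blow up the locus $(H_{\Supp D}>H_{\Omega,q})$. But the corollary (and the remark after Lemma \ref{lem:HSlemma}) gives only $H_{\Supp D}\not< H_{\Omega,q}$: the order on Hilbert--Samuel functions is partial, and Lemma \ref{lem:HSlemma} concludes $H_{\Supp D,a}\nleq H_{\Omega,q}$ when the normal form fails, which allows values \emph{incomparable} to $H_{\Omega,q}$. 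Your proposed centre therefore misses the non-stable-snc points where $H_{\Supp D}$ is incomparable to $H_{\Omega,q}$, so blowing it up cannot achieve (b). This is precisely why the paper restricts attention to the \emph{maximal} strata $(\Omega,q)\in K(X,D)$ (Definitions \ref{def:orderingSigmaOmegaQ} and \ref{def:K}), proves Proposition \ref{prop:FixHSfunction} by running the algorithm for $\io=((e,c),H_{\Supp D})$ while blowing up only those components of the maximum locus that contain no stable-snc points, and uses maximality in Lemma \ref{claim:claimAempty} to force $H_{\Supp D}=H_{\Omega,q}$ on those strata; the non-maximal strata are then handled by passing to the open complement and inducting on the well-founded set $\text{Fin}(\mathcal{M})$. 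Your closing appeal to ``a single invariant that strictly decreases'' is exactly what is missing: the paper never constructs such an invariant, and termination and closedness of the centres rest instead on this nested induction (on $m$, then on $\text{Fin}(\mathcal{M})$), together with the specific order of operations inside Proposition \ref{prop:desingOfJ} ($J$-reduction, then Hilbert--Samuel fixing, then $J$-reduction again, with the verification that the last step preserves the normal form secured by the previous one).
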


\begin{proof}
The proof is by induction on the number of components $m$ of $X$. We use the notation of Definitions \ref{def:pairs} and \ref{def:4tuples}.
\medskip

\noindent
\emph{Case $m=1$.} For $m=1$ (i.e., $X=X^{(1)}$), we apply Theorem \ref{thm:mainDEqualZero} to $(D \cup Y|_{X},E|_X)$, and end up with $(D'\cup Y'|_{X'},E'|_{X'})$ stable-snc. Since $D$ is a divisor on $X$, then $D'$ is a divisor on $X'$, and we have 
$(X',D',E',Y')$ stable-snc. All centres of blowing up involved are snc with respect to $W = X\cup Y$, by
Remarks \ref{rem:varfunct}(2).
\medskip

\noindent
\emph{General case.} The sequence of blowings-up will depend on the ordering of the components $X^{(i)}$ of $X$.\medskip

\emph{By induction, we can assume} that $(X^{m-1},D^{m-1},E,X^{(m)}\cup Y)$ is stable-snc. We want to construct a sequence of admissible blowings-up after which the transform
$(X',D',E',Y')$ of $(X,D,E,Y)$ is stable-snc. For this purpose, we only have to remove the unwanted singularities of $D$ in $X^{(m)}$. 

\medskip\noindent
\textbf{A.} We will first reduce to the case that $(X,D,E,Y)$ is stable-snc at every point of 
$X^{m-1}$. For this purpose,
we use the partition of $X^{(m)}$ by the sets 
$\Sigma_{\Omega,q}=\Sigma_{\Omega,q}(X,D)$ (see Definition \ref{def:sigmapq}). 
Clearly, the $\Sigma_{\Omega,q}$ with $r_{\Omega}:=r \geq 2$, where
$\Omega=(e,c_1,\ldots,c_r)$, form a partition of $X^{(m)} \cap X^{m-1}$.

We use the ordering of the set of Hilbert-Samuel functions $H_{\Omega,q}$
(Definitions \ref{definitionorderHSfunction} and \ref{def:Hpq}) to order the set of tuples 
$(\Omega,q)$ and thus the strata 
$\Sigma_{\Omega,q}$ (Definition \ref{def:sigmapq}).

\begin{definition}\label{def:orderingSigmaOmegaQ}
 We say that $(\Omega_1,q_1)\geq(\Omega_2,q_2)$ and also that $\Sigma_{\Omega_1,q_1}\geq\Sigma_{\Omega_2,q_2}$, if 
 $(\Omega_1,H_{\Omega_1,q_1})\geq(\Omega_2,H_{\Omega_2,q_2})$ in the lexicographic order, where 
we compare $\Omega_1$, $\Omega_2$ also lexicographically, and $H_{\Omega_1,q_1}$, $H_{\Omega_2,q_2}$ by 
Definition \ref{definitionorderHSfunction}.
\end{definition}

The order above corresponds to that in which we will eliminate the non-stable-snc points from the strata $\Sigma_{\Omega,q}$, $r_{\Om} \geq 2$. 
\medskip

Clearly for all $\Omega$ and $q$, the closure
$\overline{\Sigma}_{\Omega,q}$ of $\Sigma_{\Omega,q}$ has the property
\begin{equation}\label{eq:closureofSigma}
\overline{\Sigma}_{\Omega,q}\subset\bigcup_{(\Omega',q')\geq (\Omega,q)}\Sigma_{\Omega',q'}.
\end{equation}

\begin{definition}\label{def:monotone}\label{def:K}
Let $\mathcal{M}$ denote the set of all possible values of $(\Omega,q)$, and let
$\mathcal{M}(X,D):=\{(\Omega,q)\in\mathcal{M}:\ \emptyset \neq \Sigma_{\Omega,q}(X,D)
\subset X^{(m)} \cap X^{m-1}\}$. Let $K(X,D)$ denote the set of maximal elements of
$\mathcal{M}(X,D)$.
\end{definition}

Note that $K(X,D)$ consists only of incomparable pairs $(\Omega,q)$, and that,
after an admissible blowing-up $\s$, all points of $\s^{-1}(a)$,
where $a \in \Sigma_{\Omega,q}$, lie in strata $\leq \Sigma_{\Omega,q}$.

We apply Proposition \ref{prop:desingOfJ} of the following section to construct a
morphism $X'\rightarrow X$ given by a sequence of admissible blowings-up
such that $(X',D',E',Y')$ is stable-snc on the strata $\Sigma_{\Omega,q}(X',D')$, where 
$(\Omega, q) \in K(X,D)$,

Let $U' := X' \setminus \bigcup_{(\Omega,q)\in K(X,D)}\Sigma_{\Omega,q}(X',D')$. By 
\eqref{eq:closureofSigma}, $U'$ is open. Clearly, $\mathcal{M}(U',D'|_{U'})=\mathcal{M}(X',D')\setminus K(X,D)$. The set $\text{Fin}(\mathcal{M})$ of finite subsets of $\mathcal{M}$, ordered by inclusion, is a partially ordered set in which every nonempty subset has a minimal element. 
We can therefore assume by induction on $\text{Fin}(\mathcal{M})$ that 
$(U',D'|_{U'},E',Y')$ is stable-snc at every point in $X^{m-1}$. The blowings-up 
involved have centres that are nowhere 
stable-snc and are, therefore, closed not only in $U'$ but also in $X'$.

\medskip\noindent
\textbf{B.} Under the assumption that $(X,D,E,Y)$ is stable-snc at every point of 
$X^{m-1}$, we complete the proof as follows: Let 
$U = X^{(m)}\setminus X^{m-1}$. We apply Theorem \ref{thm:mainDEqualZero} to 
$(D|_U\cup Y|_U,E)$ (regarding $D|_U\cup Y|_U$ 
as a subvariety of the smooth variety $U$), to get $(D'|_{U'}\cup Y'|_{U'},E')$ stable-snc. Since $D|_U$ is a divisor on $U$, then $D'|_{U'}$ is a divisor on $U'$. Therefore,
$(U',D'|_{U'},E'|_{U'},Y'|_{U'})$ is stable-snc. The centres of blowing up involved contain no
stable-snc points. Since $(X,D,E,Y)$ is stable-snc at every point of $X\setminus U$ and the stable-snc locus is open, these centers are closed not only in $U$ but also in $X$.
\end{proof}


\section{Desingularization at the singular locus of $X$}\label{sec:desatsinglocusofX}
In  this section, we complete the proof of Theorem \ref{thm:fortriples} by showing
how to eliminate non-stable-snc singularities from the strata $\Sigma_{\Omega,q}$ with 
$r_{\Omega}\geq2$. We recall that these strata consist of points belonging to 
at least $2$ components of $D$ which lie in different components of $X$. We will
use the notation of Section \ref{sec:maintheorem}.

\begin{proposition}\label{prop:desingOfJ}
Let $(X,D,E,Y)$ denote a $4$-tuple as in Definition \ref{def:4tuples}, satisfying the
hypotheses of Theorem \ref{thm:fortriples}. Assume that
$(X^{m-1},D^{m-1},E,X^{(m)}\cup Y)$ is stable-snc. 
Then there is a sequence of admissible smooth blowings-up whose centres are snc
with respect to $W = X\cup Y$, such that:
 \begin{enumerate}
 \item each centre of blowing-up contains only non-stable-snc  points;
 \item the transform $(X',D',E',Y')$ of $(X,D,E,Y)$ by the blowing-up sequence
 is stable-snc at all points of the strata $\Sigma_{\Omega,q}(X',D')$,
 where $(\Omega,q)\in K(X,D)$.
 \end{enumerate}
 \end{proposition}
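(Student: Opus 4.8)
The plan is to achieve, on each maximal stratum $\Sigma_{\Omega,q}$ with $(\Omega,q)\in K(X,D)$, conditions (b) and (c) of Proposition \ref{lemmafactorssnc}(2); condition (a) is guaranteed by the standing hypothesis that $(X^{m-1},D^{m-1},E,X^{(m)}\cup Y)$ is stable-snc, so that once (b) and (c) hold the triple is stable-snc on the stratum. Since the elements of $K(X,D)$ are maximal, \eqref{eq:closureofSigma} shows that the strata $\Sigma_{\Omega,q}$, $(\Omega,q)\in K(X,D)$, have pairwise disjoint closures inside $X^{(m)}\cap X^{m-1}$; I may therefore treat them simultaneously and argue locally near a point $a\in\Sigma_{\Omega,q}$, using the local normal form of Lemma \ref{lem:HSlemma}.

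\emph{Step A: correcting the Hilbert-Samuel function.} By Lemma \ref{lem:HSlemma} and Corollary \ref{cor:HSlemma}, at every $a\in\Sigma_{\Omega,q}$ one has $H_{\Supp D,a}\geq H_{\Omega,q}$, with equality failing exactly at the points violating (b), which are non-stable-snc. I would therefore run the desingularization algorithm driven by the invariant $\inv$ attached to $H_{\Supp D}$, restricted to $\overline{\Sigma}_{\Omega,q}$, blowing up the maximum locus of $H_{\Supp D}$ there until $H_{\Supp D}=H_{\Omega,q}$ on $\Sigma_{\Omega,q}$. Each centre consists of points where $H_{\Supp D}>H_{\Omega,q}$, hence only of non-stable-snc points, giving condition (1); and I must check that these centres are snc with respect to $W$, which should follow from the stable-snc normal form of $(W,E)$ together with the fact that the relevant loci lie in intersections of components of $X$. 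After Step A the normal form \eqref{eq:HSlemma} is available throughout $\Sigma_{\Omega,q}$.

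\emph{Step B: eliminating the obstruction ideal.} With the Hilbert-Samuel normal form in hand, the obstruction ideal $J=J(X,D)$ of Definition \ref{definitionJ} is, locally, of the shape $I_m+(g_2)+\bigcap_{i<m}I_i$ computed in the proof of Proposition \ref{lemmafactorssnc}, and by part (c) the triple is stable-snc on the stratum precisely where $J=\mathcal{O}_X$. I would eliminate the obstruction by the cleaning-type blowings-up of \cite[Sect.\,7]{BV}: resolving $J$ (equivalently, the factor $g_2$) by admissible blowings-up whose centres lie in the cosupport of $J$. Such centres contain only non-stable-snc points by Proposition \ref{lemmafactorssnc}, they are snc with respect to $W$ by construction, and --- this is the point needing care --- they do not raise $H_{\Supp D}$, so the outcome of Step A is preserved. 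Once $J$ becomes trivial, conditions (a)--(c) all hold and the transform is stable-snc on $\Sigma_{\Omega,q}(X',D')$ for every $(\Omega,q)\in K(X,D)$.

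The main obstacle I expect is Step B: showing that the blowings-up resolving $J$ can be chosen with centres that are simultaneously snc with respect to $W$, consist only of non-stable-snc points, leave $H_{\Supp D}$ unchanged (so as not to undo Step A), and terminate --- and then assembling these local procedures into a single global, functorial sequence. Termination of the overall construction would follow from the well-foundedness of $\mathrm{Fin}(\mathcal{M})$ exploited in the proof of Theorem \ref{thm:fortriples} as elements of $K(X,D)$ are successively removed, while functoriality would be inherited from that of the desingularization invariant and of the \'etale-invariant ideal $J(X,D)$.
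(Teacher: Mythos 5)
Your overall architecture is the right one and matches the paper's: first arrange $H_{\Supp D}=H_{\Omega,q}$ on the maximal strata (the paper's Proposition \ref{prop:FixHSfunction}), then kill the obstruction ideal $J$ by cleaning-type blowings-up (the paper's Lemma \ref{removingJpequals2}), and conclude via Proposition \ref{lemmafactorssnc}. But each of your two steps has a genuine gap. In Step A, Lemma \ref{lem:HSlemma} does \emph{not} give $H_{\Supp D,a}\geq H_{\Omega,q}$ on $\Sigma_{\Omega,q}$; it gives $H_{\Supp D,a}\nleq H_{\Omega,q}$, and since Hilbert--Samuel functions are only partially ordered (Definition \ref{definitionorderHSfunction}), the value at a bad point may be \emph{incomparable} to $H_{\Omega,q}$, so bad points need not lie in the maximum locus of $H_{\Supp D}$ at all. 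Worse, "the algorithm restricted to $\overline{\Sigma}_{\Omega,q}$" is not a valid procedure: the algorithm's centres are maximum loci of the full invariant $\inv$ over the whole space, and their intersections with a closed stratum need not be smooth or snc with respect to $W$ and the accumulated exceptional divisors. The paper instead runs the \emph{global} algorithm for the augmented invariant $\io=((e,c),H_{\Supp D})$, writes its maximum locus as $A\cup B$, where $B$ is the union of components whose generic point is stable-snc, and blows up only $A$ (whole components of the global maximum locus, hence legitimate centres). Your claim that every centre "consists of points where $H_{\Supp D}>H_{\Omega,q}$, hence only of non-stable-snc points" is exactly what fails without this selection: components of the maximum locus can be generically, or entirely, stable-snc. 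And once only $A$ is blown up, the conclusion that $H_{\Supp D}=H_{\Omega,q}$ on the maximal strata when $A=\emptyset$ is not automatic; it is Lemma \ref{claim:claimAempty}, whose proof needs Corollary \ref{cor:HSlemma}, the maximality of $(\Omega,q)$ in $K(X,D)$, and the lexicographic comparison of $\Omega$ first, precisely to handle the incomparable values you elided.

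In Step B you conflate the transform $J(X,D)'$ of $J$ with the obstruction ideal $J(X',D')$ of the transformed pair. Resolving the marked ideal $(J+\cH+\cK,1)$ achieves $J(X,D)'=\mathcal{O}_{X'}$, but this does \emph{not} give $J(X',D')=\mathcal{O}_{X'}$: see the paper's explicit example in Section \ref{sec:desingularization of J}, where $J(X'',D'')=(x_1,x_2,z)$ although the transform of $J$ is the unit ideal. One needs Lemma \ref{claimcasepequal21} (the inclusion $J(X',D')\subset J(X,D)'$ --- which is also what guarantees that no stable-snc points are ever blown up --- together with the computation showing $J(X',D')$ is monomial in the exceptional divisors) and then the additional cleaning blowings-up of components of $V(J(X',D'))$ in Lemma \ref{removingJpequals2}(3). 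You correctly flag that Step B must not undo Step A, but this requires the concrete verification done in the paper: the properties $f\in I_{D^{m-1}}$ and $\ord f=q$ from Lemma \ref{lem:HSlemma} are preserved because the $J$-centres lie in $\Supp D_m$ and are normal crossings to $D^{m-1}$. Finally, the paper performs a reduction to $J=\mathcal{O}_X$ \emph{before} the Hilbert--Samuel step, needed for functoriality (the $J$-centres can meet points outside the maximal strata and would otherwise interfere with the induction over $\mathrm{Fin}(\mathcal{M})$ in Theorem \ref{thm:fortriples}); your ordering omits this.
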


The proof will involve several lemmas. We will use the assumptions of 
Proposition \ref{prop:desingOfJ} throughout the section.

Let $a\in X$. There is a minimal smooth local embedding variety $Z$ of 
$X$ at $a$, with a system of coordinates $\{x_{i,j}\}_{1\leq i\leq m,\ 1\leq j\leq c_i}$, 
$\{y_k\}_{1\leq k\leq q}$, $\{w_l\}_{1\leq l\leq n-|c|-q}$, $|c| = c_1 + \cdots c_m$, 
in which $a=0$ and
\begin{align*}
 X&=X^{(1)} \cup\ldots\cup X^{(m)},\\
D&=D_1+\ldots+D_m,
\end{align*}
where $X^{(i)}=(x_{i,1}=\ldots=x_{i,c_i}=0)$, $i=1,\ldots,m$, 
$D_i=(x_{i,1}=\ldots=x_{i,c_i}=y_1\dotsm y_q=0)$, $i=1,\ldots,m-1$, and 
$D_m=(x_{m,1}=\ldots=x_{m,c_m}=f=0)$, for some $f\in\mathcal{O}_{Z,a}$. 
This notation will be used throughout the section.

\subsection{Reduction of the obstruction ideal  $J(X,D)$ to $\mathcal{O}_{X}$.}\label{sec:desingularization of J} Recall Definition \ref{definitionJ} and Proposition 
\ref{lemmafactorssnc}.

Since $(X^{m-1},D^{m-1},0,X^{(m)})$ is stable-snc, $\cosupp J \subset X^{(m)}\cap X^{m-1}$
(where $\cosupp J := \supp\, \cO_X/J$). 
Let $W_1,\ldots,W_s$ denote the irreducible components of $(X^{m-1}\cup Y)|_{X^{(m)}}$, and let 
$D(1),\allowbreak D(2),\ldots,D(q)$  denote the restrictions to $X^{(m)}\cap X^{m-1}$ 
of the components 
of $D_i$, for any given $i=1,\ldots,m-1$ (the definition is independent of such $i$). 
Let $\cH_i \subset \cO_{X^{(m)}}$ denote the ideal of $D(i) \subset X^{(m)}$, $i=1,\ldots,q$,
and let $\cK_j \subset \cO_{X^{(m)}}$ denote the ideal of $W_j$, $j=1,\ldots,s$.

Consider the marked ideal
\begin{equation} \label{eq:MarkedIdealForJ}
\ucI := (X^{(m)},X^{(m)},E|_{X^{(m)}},J+ \cH + \cK,1),
\end{equation}
where $\cH := \sum_{i=1}^q \cH_i$ and $\cK = \sum_{j=1}^s \cK_j$.
We can use desingularization of the marked ideal $\ucI$ (treating $(\cH + \cK, 1)$
as a ``boundary''; cf. Section \ref{sec:inv}) to desingularize $(J,1)$ after perhaps moving
the $D(i)$, $i=1,\ldots,q$, and $W_j$, $j=1,\ldots,s$, away from $\cosupp (J,1)$.
The blowings-up involved are admissible for
$(X,D,E,Y)$, and snc with respect to $W= X\cup Y$ (since the boundary includes $\cK$). The final transform $J(X,D)' = \cO_{X'}$. It is not necessarily true, however, that 
$J(X,D)'=J(X',D')$, so we do not necessarily have $J(X',D')=\mathcal{O}_{X'}$. 
Additional ``cleaning'' blowings-up (given by Lemma \ref{removingJpequals2}) will be needed.

\begin{example}
Consider $X=X^{(1)}+X^{(2)}=(x_1=0)\cup(x_2=0)$ and $D_1+D_2=(x_1=y=0)+(x_2=x_1+yzw=0)$. 
Then $J(X,D)=(x_1,x_2,zw)$. The desingularization algorithm for $J$ first blows up 
$(x_1=x_2=z=w=0)$. In the $z$-chart, we get $X'=(x_1x_2=0)$ and 
$D'=(x_1=y=0)+(x_2=x_1+yzw=0)$. Then the desingularization of $J$ is completed by blowing up 
$(x_1=x_2=w=0)$. In the $w$-chart we have $X''=(x_1x_2=0)$ and 
$D''=(x_1=y=0)+(x_2=x_1+yz=0)$. Note that $J(X'',D'')=(x_1,x_2,z)\neq (1)=J(X,D)''$. Since 
$z=0$ is now a component of the exceptional divisor, we can blow up with center 
$X^{(1)}\cap X^{(2)}\cap(z=0)$. After this ``cleaning" blowing-up, we have 
$X'''=(x_1x_2=0)$, $D'''=(x_1=y=0)+(x_2=x_1+y)=(x_1=x_1 +y=0)+(x_2=x_1+y)$, and
$J(X''',D''')=(1)$; in particular $(X''', D''')$ is stable-snc.
\end{example}

\begin{lemma}\label{claimcasepequal21}
Consider the morphism $X' \to X$ given by the desingularization sequence (beginning
with that of \eqref{eq:MarkedIdealForJ} above.
Then
\begin{equation}\label{eq:inclusion}
J(X',D')\subset J(X,D)'.
\end{equation}
Moreover, if $J(X,D)'=\mathcal{O}_{X'}$ and $a'\in X'$, then
\[
J(X',D')_{a'}=\bigcap_{1\leq i\neq j\leq m} \left(I_i+I_j+(u^{\alpha_{i,j}})\right),
\]
where $u^{\alpha_{i,j}}$ are monomials in generators $u_p$ 
of the ideals of the components of the exceptional divisor of $X' \to X$.
 \end{lemma}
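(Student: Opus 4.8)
The plan is to prove both assertions by a local computation in the normal-form coordinates set up above, after reducing the first to a single blowing-up. Since the controlled transform (the $d=1$ transform $u^{-1}\s^*$ relevant to the marked ideal \eqref{eq:MarkedIdealForJ}) commutes with composition of blowings-up and preserves inclusions of ideal sheaves, and since at every stage of the desingularization sequence the centre is snc with respect to $W$ and lies in $\cosupp J \subseteq X^{(m)}\cap X^{m-1}$, it is enough to prove $J(X_{j+1},D_{j+1}) \subseteq J(X_j,D_j)'$ for a single admissible blowing-up $\s\colon X_{j+1}\to X_j$ and then iterate. Throughout I would use the explicit description $J(X,D)_a = \bigcap_{i=1}^{m-1}\bigl(I_i+I_m+(g_2)\bigr)$ obtained in the proof of Proposition \ref{lemmafactorssnc} (legitimate because the standing hypothesis of Proposition \ref{prop:desingOfJ} is that $(X^{m-1},D^{m-1},0,X^{(m)})$ is stable-snc), where $f=g_1+y_1\dotsm y_q\,g_2$ with $g_1\in\bigcap_{i<m}I_i$.

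For the inclusion I would record how each ingredient of the normal form transforms under one blowing-up. As all components $X^{(i)}$ are smooth and the centre $C$ is snc with respect to $X$, the strict transform ideal of $X^{(i)}$ is $u^{-\epsilon_i}\s^*I_{X^{(i)}}$ with $\epsilon_i\in\{0,1\}$ according as $C\not\subseteq X^{(i)}$ or $C\subseteq X^{(i)}$ (cf. \S\ref{sec:DesProd}); the defining function $f$ of $D_m$ is replaced by its strict transform $f'=u^{-\mu}\s^*f$ with $\mu=\ord_C(f|_{X^{(m)}})$; and the monomial $y_1\dotsm y_q$ pulls back to $u^{\tau}t'$, where $t'$ is its birational transform and $u$ generates the exceptional ideal. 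Substituting into $f=g_1+y_1\dotsm y_q\,g_2$ exhibits the fresh quotient as $g_2'=u^{\delta}\s^*g_2$ for an explicit exceptional exponent $\delta=\tau-\mu$. The desired inclusion $\bigcap_i\bigl(I_{X^{(i)}}'+I_{X^{(m)}}'+(g_2')\bigr)\subseteq u^{-1}\s^*\bigl(\bigcap_i(I_i+I_m+(g_2))\bigr)$ then comes down to comparing $\delta$ with the single exceptional factor removed by the controlled transform (and to checking that $u^{-\mu}\s^*g_1$ still lies in $\bigcap_i I_{X^{(i)}}'$). I expect this exceptional-multiplicity bookkeeping — carried out uniformly over all $a'$ in the fibre, and for centres contained in some but not all of the $X^{(i)}$ — to be the main obstacle; the desingularization of \eqref{eq:MarkedIdealForJ} is arranged precisely so that the comparison goes the right way.

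For the second assertion, assume $J(X,D)'=\mathcal{O}_{X'}$ and fix $a'\in X'$. Because $(X^{m-1},\dots)$ remains stable-snc, the local normal form persists at $a'$, so I would compute $J(X',D')_{a'}$ directly from Definition \ref{definitionJ}, now pairwise rather than only for the index $m$. The hypothesis $J(X,D)'=\mathcal{O}_{X'}$ forces the non-exceptional part of each transformed quotient to be a unit, so that for each pair $(i,j)$ the relevant quotient is a unit times a monomial $u^{\alpha_{i,j}}$ in the exceptional generators $u_p$. Running the colon computation of Proposition \ref{lemmafactorssnc} with this input gives $[\,I_{D_i}'+I_{X^{(j)}}' : I_{D_j}'+I_{X^{(i)}}'\,]=I_i+I_j+(u^{\alpha_{i,j}})$, where $I_i,I_j$ now denote the ideals of the components at $a'$, and intersecting over $i\neq j$ yields the stated formula. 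As a sanity check, in the worked example preceding the lemma one has $J(X'',D'')=(x_1,x_2,z)$ with $z$ an exceptional coordinate, which is exactly of this monomial shape; this is also what makes the residual obstruction amenable to the cleaning blowings-up that follow.
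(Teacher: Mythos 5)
There is a genuine gap, and it lies at the heart of your computation. Your entire single-blowing-up bookkeeping rests on the description $J(X,D)_a=\bigcap_{i<m}\bigl(I_i+I_m+(g_2)\bigr)$ with $f=g_1+y_1\dotsm y_q\,g_2$, $g_1\in\bigcap_{i<m}I_i$. But that description is \emph{not} a consequence of the standing hypothesis of Proposition \ref{prop:desingOfJ} alone: in the proof of Proposition \ref{lemmafactorssnc} it is derived from condition (b) of that proposition (via Lemma \ref{lem:HSlemma}), i.e.\ from $a\in\Sigma_{\Omega,q}$ together with $H_{\Supp D,a}=H_{\Omega,q}$, which is exactly what guarantees $f\in\bigcap_{i<m}I_i+(y_1\dotsm y_q)+I_m$ and hence the existence of $g_2$. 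Under the hypotheses of the present lemma one only knows that $(X^{m-1},D^{m-1},E,X^{(m)}\cup Y)$ is stable-snc, so $D_m=V(I_m,f)$ for an \emph{arbitrary} $f$. This matters: in the proof of Proposition \ref{prop:desingOfJ}, the lemma is invoked for the initial reduction to $J=\mathcal{O}_X$, \emph{before} Proposition \ref{prop:FixHSfunction} has fixed the Hilbert--Samuel function --- precisely at points where your decomposition can fail. At such points both colon ideals $[(I_i+I_m+(y_1\dotsm y_q)):(I_i+I_m+(f))]$ and its reverse can be proper ideals, and neither reduces to the $g_2$ form, so your transform computation does not cover them. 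The same defect infects your second assertion: ``the hypothesis forces the non-exceptional part of each transformed quotient to be a unit'' is essentially the conclusion to be proved, and ``running the colon computation of Proposition \ref{lemmafactorssnc}'' again requires the unavailable normal form.

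For comparison, the paper's proof is structured differently: it reduces to a \emph{single factor} $[I_{D_i}+I_{X^{(j)}}:I_{D_j}+I_{X^{(i)}}]$ of the intersection defining $J$, and then tracks that colon ideal through the blowing-up sequence by repeating the computation of \cite[Lemma 7.3]{BV}, with the hypersurface generator $x_i$ there replaced by the ideal $I_i$ here; that computation makes no assumption on $f$ beyond the normal form for $X$ and $D^{m-1}$. Your preliminary reductions are sound and are implicitly part of any such argument --- the controlled transform (mark $d=1$) does compose and preserve inclusions, so one blowing-up at a time suffices, and factor-by-factor suffices since the controlled transform of an intersection contains the intersection of the controlled transforms. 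What is missing is the core step: a transform computation for the colon ideals themselves, valid without the Hilbert--Samuel normal form. To repair the proposal you would need to either carry out that general colon-ideal bookkeeping (as in \cite{BV}), or restructure the global argument so that the lemma is only ever applied after the Hilbert--Samuel condition holds --- which is not how the paper's algorithm uses it.
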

 
\begin{remark} By \eqref{eq:inclusion}, if $J(X,D)'\allowbreak\neq\allowbreak\mathcal{O}_{X'}$, then $J(X',D')\neq\mathcal{O}_{X'}$. Therefore, by Lemma \ref{lemmafactorssnc}, we never blow-up stable-snc points of the transforms of $(X,D)$ while desingularizing $J(X,D)$.
\end{remark}

 \begin{proof}[Proof of Lemma \ref{claimcasepequal21}]
It is enough to prove the lemma for one of the ``factors''
 $[I_{D_i}+I_{X^{(j)}} : I_{D_j}+I_{X^{(i)}}]$ of $J$. The proof is then the same as 
 that of \cite[Proof of Lemma 7.3]{BV}, replacing $x_i$ in the latter by $I_i$ here.
 \end{proof}

\begin{lemma}\label{removingJpequals2}
Consider the transform $(X',D',E',Y')$ of $(X,D,E,Y)$ by the desingularization 
sequence above. Then:
\begin{enumerate}
\item For every $(\Omega,q)$, $\Sigma_{\Omega,q}(X',D')$ lies in the inverse image of 
$\Sigma_{\Omega,q}(X,D)$.
\item
Let $a'\in X'$. Then
$$
J(X',D')_{a'} =  \bigcap_{1\leq i\neq j\leq m}(I_i+I_j+(u^\alpha_{i,j})), 
$$
where each $I_i$ denotes the ideal of the component $X^{(i)\prime}$ of $X'$, and 
the $u^{\alpha_{i,j}}$ are monomials in generators $u_p$ of the ideals of the components of
$E'$. Thus the variety $V(J(X',D'))$ consists of certain components of intersections of pairs of
components of $X'$ and components of $E'$.
\item After finitely many blowings-up of components of $V(J(X',D'))$ (and its successive 
transforms), the transform $(X'',D'')$ of $(X,D)$ satisfies $J(X'',D'')\allowbreak 
=\mathcal{O}_{X''}$. (For functoriality, the components to be blown up can be chosen 
according to the order on the components of $E$.)
\end{enumerate}
\end{lemma}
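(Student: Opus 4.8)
The plan is to establish the three parts in the order (2), (1), (3), since the explicit description in (2) is what feeds the other two. For (2) I would simply invoke the second assertion of Lemma \ref{claimcasepequal21}: once the desingularization of the marked ideal \eqref{eq:MarkedIdealForJ} has been carried out we have $J(X,D)' = \cO_{X'}$, so that at any $a'\in X'$
\[
J(X',D')_{a'} = \bigcap_{1\le i\ne j\le m}\bigl(I_i + I_j + (u^{\alpha_{i,j}})\bigr),
\]
with the $u^{\alpha_{i,j}}$ monomials in the generators of the components of the exceptional divisor of $X'\to X$. The only additional remark is that, by the transform rule for $E$, this exceptional divisor is a union of components of $E'$, so the $u^{\alpha_{i,j}}$ are monomials in components of $E'$. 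Taking zero loci and using that $V$ of an intersection of ideals is the union of the individual zero sets, $V(I_i+I_j+(u^{\alpha_{i,j}})) = X^{(i)\prime}\cap X^{(j)\prime}\cap V(u^{\alpha_{i,j}})$, where $V(u^{\alpha_{i,j}})$ is the union of the components of $E'$ occurring in the monomial; this yields the asserted geometric description of $V(J(X',D'))$.

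For (1) I would argue that the blowings-up preserve the stratum $\Sigma_{\Omega,q}$ at points that remain in it, by comparing local normal forms. The centres are snc with respect to $W$ and preserve the number of components of $X$, so for $a'\in\Sigma_{\Omega,q}(X',D')$ lying over $a=\sigma(a')$ the data $\Omega=(e,c_1,\ldots,c_n)$ and the integer $q$ are read off from the local normal form of the strict transform $D'$ supplied by part (2) (equivalently by the computation behind Lemma \ref{claimcasepequal21}), which at such a point is of the same type as the standard form at $a$. Combined with the fact that an admissible blowing-up can only move a point of $\Sigma_{\Omega,q}$ into a stratum $\le\Sigma_{\Omega,q}$ (the remark following Definition \ref{def:monotone}), this forces $a\in\Sigma_{\Omega,q}(X,D)$, i.e.\ $\Sigma_{\Omega,q}(X',D')\subset\sigma^{-1}\Sigma_{\Omega,q}(X,D)$. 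This step runs parallel to the corresponding computation in \cite[Sect.\,7]{BV}.

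The cleaning in (3) is where I expect the real work to lie. By part (2) each component of $V(J(X',D'))$ has the form $X^{(i)\prime}\cap X^{(j)\prime}\cap H$ for a component $H$ of $E'$ occurring in $u^{\alpha_{i,j}}$; such a centre is snc with respect to $W$ and, since $J\ne\cO$ along it, carries no stable-snc points (Proposition \ref{lemmafactorssnc} together with the inclusion \eqref{eq:inclusion}, which also guarantees that we never blow up stable-snc points during the process). I would blow up these components and track the monomials: a chart computation as in Lemma \ref{claimcasepequal21} shows that the controlled transform entering $J(X'',D'')\subset J(X',D')'$ replaces $u^{\alpha_{i,j}}$ by a monomial of strictly smaller order along the blown-up factor, the new exceptional component entering only with a reduced exponent. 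Termination would then follow from a well-ordering argument on the finite multiset of orders of the $u^{\alpha_{i,j}}$, exactly as for the monomial marked ideals desingularized in Section \ref{sec:clean} and in \cite[Sect.\,7]{BV}, with the centres ordered by the given ordering of the components of $E$ to keep the procedure canonical (hence functorial). The main obstacle is the coupling: because the components of $E'$ are shared among several pairs $(i,j)$, I must check that blowing up $X^{(i)\prime}\cap X^{(j)\prime}\cap H$ does not raise the order of the monomials $u^{\alpha_{i,k}}$ attached to other pairs, so that the chosen complexity invariant is genuinely monotone. Verifying this exponent bookkeeping carefully is the crux of the argument.
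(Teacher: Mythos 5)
Your proposal matches the paper's own proof step for step: part (2) is exactly the paper's appeal to the second assertion of Lemma \ref{claimcasepequal21} (together with your observation that the exceptional components of $X'\to X$ are components of $E'$), part (1) rests on the stratum-monotonicity remark following Definition \ref{def:monotone}, which is all the paper cites for it, and part (3) is the same strategy of directly computing the effect of blowing up components of $V(J(X',D'))$, with termination by decreasing exponents and the centres ordered by the ordering on $E$ for functoriality. The exponent bookkeeping you single out as the crux is precisely what the paper disposes of by carrying out that computation as in \cite[Lemma 7.5]{BV}, replacing $x_i$ there by $I_i$ here (and multiples of $x_i$ by $\cO_X$-linear combinations of the $x_{i,j}$); the paper offers no more detail than your outline does, so your plan is essentially the paper's argument.
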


 \begin{proof}
(1) has already been remarked in the previous section. (2) and (3) can be proved
in the same way as the corresponding assertions of 
\cite[Lemma 7.5]{BV} replacing $x_i$ in the latter by $I_i$ here, and multiples of $x_i$ by 
linear combinations with coefficients in $\cO_X$
of the $x_{i,j}$ here. (2) follows from the second assertion of 
Lemma \ref{claimcasepequal21} and, for (3), we can directly compute the effect of the 
blowings-up.
 \end{proof}

\begin{remark}\label{rem:functJ}
The desingularization algorithm of Theorem \ref{prop:desingOfJ} is functorial with respect to 
\'etale morphisms that preserve the number of irreducible components at every point,
since $J$ has an \'etale-invariant meaning and the algorithms involved in desingularizing
$J$ and in cleaning are controlled by \'etale invariants.
\end{remark}

\subsection{Simplification of $\Supp D$.}\label{sec:scrambleddesingularization}
In order to prove Proposition \ref{prop:desingOfJ} above, we need to construct a blowing-up
sequence that will allow us to decrease and control the Hilbert-Samuel function on the strata 
$\Sigma_{\Omega,q}$, where $(\Omega,q)\in K(X,D)$. We can use the desingularization of 
$\Supp D$ to decrease the Hilbert-Samuel function, but we will blow up only certain 
irreducible components of the centres prescribed by this desingularization, in a convenient way.

At every point $a\in X^{(m)}$, we introduce the invariant $\io(a) = (e(a),c(a), H_{\Supp D,a})$,
where $(e(a),c(a)) = (e,c)$ is defined as in Definition \ref{def:sigmapq}. The set of values of
this invariant is partially ordered, lexicographically (using the partial ordering of the set
of Hilbert-Samuel functions given by Definition \ref{definitionorderHSfunction}). 

Clearly, the invariant $\io = ((e,c),H_{\Supp D})$ is upper-semi-continuous on $X^{(m)}$. 
Since $(e,c)$ is constant
on $\left\{x\in\Supp D :\ H_{\Supp D,x} = H_{\Supp D,a}\right\}$ near $a$ (i.e., on the
cosupport of a presentation of $H_{\Supp D}$ at $a$), a presentation of the Hilbert-Samuel function 
of $\Supp D$ at $a$ is also a presentation of the invariant $\io$. In particular, we can
extend $\io$ to a desingularization invariant $\inv_{\io}$.

The centres of blowing up involved in the desingularization algorithm for $\inv_{\io}$
are locally the same as in the standard desingularization
algorithm, corresponding to the invariant determined by $H_{\Supp D}$, but the use 
of $\io$ instead of $H_{\Supp D}$ means that, globally, the centres may have components
that are blown up in a different order. 

Given $a \in X^{(m)}$, $\io$ admits a presentation of the form 
$\ucI = (X^{(m)},X^{(m)},0,\cI,d)$ at $a$. We will consider the desingularization invariant
$\inv$ and desingularization algorithm determined by this presentation of $\io$, treating
the restrictions to $X^{(m)}$ of the components of $E$ and the remaining components
of $W = X\cup Y$ as a ``boundary'' $\ucB$ (even though the latter are not necessarily
codimension one in $X^{(m)}$. In other words, we let $\ucB$ denote the marked ideal
$(X^{(m)},X^{(m)},0,\cB,1)$, where $\cB$ denotes the sum of the ideals on $X^{(m)}$ of
the components of $E$ and the components of $W\setminus X^{(m)}$, and we consider
the desingularization algorithm given locally by desingularization of the marked ideal
$\ucI + \ucB$. The effect of the algorithm is to decrease $\io$ after perhaps moving
the components of $E$ and $W\setminus X^{(m)}$ away from $\Supp D$. The blowings-up
involved are admissible for $(X,D,E,Y)$ and snc with respect to $W$.

\begin{proposition}\label{prop:FixHSfunction}
Given $(X,D,E,Y)$ as in Proposition \ref{prop:desingOfJ}, there is a sequence of admissible 
blowings-up $(X',D',E',Y')\rightarrow (X,D,E,Y)$, with centers snc with respect to
$W = X\cup Y$ and containing no stable-snc points, such that for every $\Sigma_{\Omega,q}\in K(X,D)$ and $a\in\Sigma_{\Omega,q}(X',D')$, $H_{\Supp D,a} = H_{\Omega,q}$.
\end{proposition}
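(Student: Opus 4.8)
The plan is to run the desingularization algorithm determined by the invariant $\inv_\io$ attached to $\io = ((e,c), H_{\Supp D})$ constructed in the paragraphs above, but to make a selection from its centres in the spirit of the proof of Theorem \ref{thm:SimDes} (and Steps 2--3 of Theorem \ref{thm:mainDEqualZero}). At each stage I would blow up only those components of the maximum locus of $\inv_\io$ that contain no stable-snc point of $(X,D,E,Y)$. Since the stable-snc locus is open and $\inv_\io$ is upper semicontinuous, this restricted maximum locus is closed in $X$; and, being a union of irreducible components of the smooth maximum locus of $\inv_\io$, it is a smooth centre. Because the boundary marked ideal $\ucB$ built from $E$ and the components of $W\setminus X^{(m)}$ is incorporated into the presentation $\ucI + \ucB$, these blowings-up are automatically admissible for $(X,D,E,Y)$ and snc with respect to $W = X \cup Y$, and by construction their centres contain no stable-snc points.

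The key to correctness is that on each stratum the value $H_{\Omega,q}$ is a floor for the Hilbert-Samuel function: by the Remark following Lemma \ref{lem:HSlemma} (and Corollary \ref{cor:HSlemma}) one has $H_{\Supp D,a}\not< H_{\Omega,q}$ for $a\in\Sigma_{\Omega,q}$, while by Proposition \ref{lemmafactorssnc}(2) a stable-snc point of $\Sigma_{\Omega,q}$ realizes exactly this floor, $H_{\Supp D,a}=H_{\Omega,q}$. Thus stable-snc points carry the minimal value on their stratum and never lie in the selected (non-stable-snc) maximum locus, so excluding them costs nothing. Desingularization of $\Supp D$ strictly decreases $H_{\Supp D}$ at the points blown up, and an admissible blowing-up sends the fibre over a point of $\Sigma_{\Omega,q}$ into strata $\le\Sigma_{\Omega,q}$ (as recorded after Definition \ref{def:monotone}); hence a pair $(\Omega,q)\in K(X,D)$, being maximal, is processed at the top of the algorithm and is never fed from above. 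Iterating, the Hilbert-Samuel function along each top stratum runs through a non-increasing sequence bounded below by $H_{\Omega,q}$, so after finitely many steps every point of $\Sigma_{\Omega,q}(X',D')$ with $(\Omega,q)\in K(X,D)$ satisfies $H_{\Supp D,a}=H_{\Omega,q}$, again by Corollary \ref{cor:HSlemma}.

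The hard part will be the interaction between the selection and this floor, in two respects. First, I must check that restricting the maximum locus to its non-stable-snc part still yields a smooth centre and that the resulting sequence remains genuinely admissible for $\inv_\io$ over the whole history; this is exactly the modified-invariant argument of Theorem \ref{thm:SimDes}, transported to the invariant $\io = ((e,c),H_{\Supp D})$. Second, and more delicate, is confirming simultaneously that $H_{\Supp D}$ can be driven all the way down to the floor without ever meeting a stable-snc point, and that reaching the floor forces the exact value $H_{\Omega,q}$ on the top strata. Here Corollary \ref{cor:HSlemma} is essential, since it is precisely what guarantees that the only value $\le H_{\Omega,q}$ attainable in $\Sigma_{\Omega,q}$ is $H_{\Omega,q}$ itself, so that ``decrease the invariant as far as the algorithm allows'' coincides with ``achieve the correct Hilbert-Samuel function'' on $K(X,D)$.
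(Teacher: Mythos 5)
Your selection mechanism (blow up only those components of the maximum locus of $\inv_\io$ that contain no stable-snc points) is exactly the paper's: the paper writes the maximum locus as a disjoint union $A\cup B$, where $A$ is the union of components with no stable-snc points and $B$ is the rest, and blows up only $A$. But your argument for why this yields the conclusion has a genuine gap. The selective algorithm terminates when $A=\emptyset$, i.e.\ when every component of the maximum locus of $\inv_\io$ contains a stable-snc point --- \emph{not} when every bad point of a top stratum has been blown up. A point $a\in\Sigma_{\Omega,q}$ with $(\Omega,q)\in K(X,D)$ and $H_{\Supp D,a}\neq H_{\Omega,q}$ need never appear in the maximum locus at all: the leading term of $\inv_\io$ is $\io(a)=(\Omega,H_{\Supp D,a})$, and such a point can be permanently dominated by points in \emph{non-maximal} strata (e.g.\ same $\Omega$, larger Hilbert-Samuel value) lying on components that contain stable-snc points and hence are never blown up. So your identification ``maximal $(\Omega,q)$ is processed at the top of the algorithm'' is false --- maximality of $(\Omega,q)$ in the stratum order of Definition \ref{def:orderingSigmaOmegaQ} does not make $\io$ maximal at points of $\Sigma_{\Omega,q}$ --- and consequently ``iterating drives $H_{\Supp D}$ down to the floor'' does not follow: $H_{\Supp D}$ only decreases at points that are actually blown up. Your monotonicity argument is also not sound on its own terms: the order on Hilbert-Samuel functions is partial, and the Remark after Lemma \ref{lem:HSlemma} gives only $H_{\Supp D,a}\not< H_{\Omega,q}$, which permits incomparable values; a non-increasing sequence that can never go below the floor must stabilize (DCC), but nothing in your argument forces it to stabilize \emph{at} the floor.

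The paper closes precisely this gap with Lemma \ref{claim:claimAempty}, which is an indirect domination argument rather than a convergence argument. Once $A=\emptyset$, suppose $a\in\Sigma_{\Omega,q}$, $(\Omega,q)\in K(X,D)$, had $H:=H_{\Supp D,a}\neq H_{\Omega,q}$. Then $a$ is not in the maximum locus (which now equals $B$), so some $b\in B$ satisfies $\io(b)\geq\io(a)$. By Lemma \ref{lem:centersinsigmapq} --- which you never invoke, but which is essential here --- each component of $B$ lies entirely inside a single stratum $\Sigma_{\Omega',q'}$ and carries the exact value $H_{\Omega',q'}$, so $\io(b)=(\Omega',H_{\Omega',q'})\geq(\Omega,H)$. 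If $\Omega'>\Omega$, then $(\Omega',q')>(\Omega,q)$, contradicting maximality of $(\Omega,q)$ in $K(X,D)$; if $\Omega'=\Omega$, Corollary \ref{cor:HSlemma} gives $H_{\Omega',q'}\geq H_{\Omega,q}$, and either $H_{\Omega',q'}>H_{\Omega,q}$ (again contradicting maximality) or $H_{\Omega',q'}=H_{\Omega,q}$, in which case Corollary \ref{cor:HSlemma} forces $H=H_{\Omega,q}$. This case analysis, converting domination by a $B$-point into the desired equality via maximality of $(\Omega,q)$, is the missing idea in your proposal; without it, the proposal establishes only that the selective algorithm halts, not that the halting state has the stated property.
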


\begin{lemma}\label{lem:centersinsigmapq}
Let $C$ be an irreducible smooth subvariety of $\Supp D$. Given $(\Om,q)$, suppose that
$\io = (\Om, H_{\Om,q})$ at every point of $C$. If 
$C \cap \Sigma_{\Omega,q} \neq \emptyset$, then $C\subset\Sigma_{\Omega,q}$.
\end{lemma}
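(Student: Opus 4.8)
The plan is to reduce the inclusion $C\subset\Sigma_{\Omega,q}$ to an equality of the numerical datum that distinguishes the strata. Since the invariant $\io=((e,c),H_{\Supp D})$ is constant on $C$, every point $a\in C$ satisfies $(e(a),c(a))=\Omega$ and $H_{\Supp D,a}=H_{\Omega,q}$; in particular each such $a$ lies in exactly one stratum $\Sigma_{\Omega,q(a)}$, and $C\subset\Sigma_{\Omega,q}$ is precisely the assertion that $q(a)=q$ for every $a\in C$. I would first record the monotonicity underlying all comparisons: because the ideals $\bigcap_i(I_i+(y_1\cdots y_{q'}))$ are nested decreasingly in $q'$, the map $q'\mapsto H_{\Omega,q'}$ is strictly increasing, and its values are pairwise comparable.

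The easy inequality is $q(a)\le q$. At any $a\in C$ we have $a\in\Sigma_{\Omega,q(a)}$ and $H_{\Supp D,a}=H_{\Omega,q}$, so applying Corollary \ref{cor:HSlemma} with the choice $q'=q$ (the hypothesis $H_{\Omega,q}\ge H_{\Supp D,a}$ holding with equality) gives $H_{\Omega,q}\ge H_{\Omega,q(a)}$; by the monotonicity above this forces $q(a)\le q$. Equivalently, the Remark following Lemma \ref{lem:HSlemma} yields $H_{\Supp D,a}\not< H_{\Omega,q(a)}$, i.e.\ $H_{\Omega,q}\ge H_{\Omega,q(a)}$.

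The reverse inequality $q(a)\ge q$ carries the real content. The function $a\mapsto q(a)$ is upper semicontinuous: on $X^{(m)}$ the number of components of $D$ through $a$ lying in a fixed component of $X$ is a finite sum of indicator functions of closed sets, and $q$ is a minimum of finitely many such counts. Hence on the irreducible $C$ the minimum of $q$ is attained at the generic point $\eta$, with $q(\eta)\le q(a)\le q$ for all $a$ and $q(a_0)=q$ at the given point $a_0\in C\cap\Sigma_{\Omega,q}$; it therefore suffices to prove $q(\eta)=q$. This is the main obstacle, since a priori $\eta$ could be a non-stable-snc point of a lower stratum, with $q(\eta)<q$ yet $H_{\Supp D,\eta}=H_{\Omega,q}>H_{\Omega,q(\eta)}$ (Lemma \ref{lem:HSlemma} permits a strictly larger Hilbert--Samuel value at non-nice points). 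To rule this out I would use that $\io$ is constant on the \emph{smooth} variety $C$: the constant-Hilbert--Samuel locus is smooth and $\Supp D$ is normally flat along it, so the complete local type of $\Supp D$, and together with the constancy of $(e,c)$ that of the pair $(X,D)$, is constant along $C$; in particular the combinatorial datum $q(a)$ is constant. Thus $q(\eta)=q(a_0)=q$, whence $q(a)\equiv q$ and $C\subset\Sigma_{\Omega,q}$.
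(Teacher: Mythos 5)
Your step (i), that $q(a)\le q$ at every point of $C$, is fine: it is a pointwise application of Corollary \ref{cor:HSlemma} together with the strict monotonicity of $q'\mapsto H_{\Omega,q'}$, and is consistent with how the paper uses that corollary. But the step you yourself identify as carrying ``the real content'' --- ruling out $q(\eta)<q$ at the generic point --- rests on a false principle. You assert that constancy of the Hilbert--Samuel function along the smooth $C$ gives normal flatness of $\Supp D$ along $C$ (true, by Bennett/Hironaka), and that normal flatness forces the \emph{complete local type} of $\Supp D$ to be constant along $C$ (false). The Whitney umbrella $(y^2-x^2z=0)\subset\mathbb{A}^3$ is equimultiple, hence normally flat, along the $z$-axis, yet its formal type jumps: the germ is irreducible at the origin and has two smooth branches at every other point of the axis. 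So normal flatness does not control the kind of combinatorial data you are trying to transport along $C$, and in particular cannot by itself show that $q$ is constant. A secondary problem: your semicontinuity claim for $q$ is wrong as stated on $X^{(m)}$, because $q$ is a minimum over the \emph{point-dependent} set of relevant components $X^{(i_j)}$, not over a fixed index set; for instance $q$ jumps \emph{up} at a nearby generic point when a component of $D$ lying in some other $X^{(i)}$ ceases to pass through it, removing $X^{(i)}$ from the minimum. Restricted to $C$ this can be repaired (constancy of $c$ fixes the cardinality of the relevant set, which can only grow under specialization, hence is constant along $C$), but you do not make that argument --- and once $q$ is claimed constant, as in your step (iii), the semicontinuity is redundant anyway.

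What is actually needed, and what the paper proves, is far weaker than constancy of the local type: constancy of $H_{\Supp D}$ along $C$ forces the set of \emph{algebraic} components of $D$ passing through points of $C$ to be locally constant along $C$. This follows from upper semicontinuity of the Hilbert--Samuel function together with the observation that adding a component strictly increases it (if $B\not\subset A$ passes through $a$, then $I_{A\cup B}\subsetneq I_A$ and Krull's intersection theorem give $H_{A\cup B,a}>H_{A,a}$). Note that in the umbrella example this weaker statement holds (there is a single algebraic component throughout), while your stronger one fails. From this the paper obtains a neighbourhood $U$ of the given point $a\in C\cap\Sigma_{\Omega,q}$ with $U\subset\Sigma_{\Omega,q}$; then irreducibility of $C$ gives $C\subset\overline{U}\subset\overline{\Sigma}_{\Omega,q}$, the closure relation \eqref{eq:closureofSigma} places any $b\in C\setminus U$ in some $\Sigma_{\Omega,q'}$ with $H_{\Omega,q'}\geq H_{\Omega,q}$, and Corollary \ref{cor:HSlemma} (the Hilbert--Samuel function cannot be $<H_{\Omega,q'}$ on $\Sigma_{\Omega,q'}$) forces $q'=q$. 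Replacing your step (iii) by this component-constancy argument would close the gap.
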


\begin{proof}
Let $a\in C \cap \Sigma_{\Omega,q}$. Since the $H_{\Supp D}$ is constant on $C$, $a$ has a neighbourhood $U\subset C$ such that each point of $U$ lies in
precisely those components of $D$ containing $a$.  Therefore, $U\subset\Sigma_{\Omega,q}$. Since the closure of $\Sigma_{\Omega,q}$ lies in the union of the $\Sigma_{\Omega',q'}$ with $(\Omega',q')\geq(\Omega,q)$, any $b \in C \setminus U$ belongs to $\Sigma_{\Omega',q'}$, for some $(\Omega',q')\geq(\Omega,q)$. Moreover, $\Omega'=\Omega$, since $\iota$ is constant on $C$. Thus $H_{\Supp D,b} = H_{\Omega,q} < H_{\Omega,q'}$. But, by Corollary \ref{cor:HSlemma}, the Hilbert-Samuel function cannot be $< H_{\Omega,q'}$ on
$\Sigma_{\Omega,q'}$ . Therefore $b \in \Sigma_{\Omega,q}$.
\end{proof}

\begin{proof}[Proof of Proposition \ref{prop:FixHSfunction}]
We consider the desingularization algorithm preceding Lemma \ref{lem:centersinsigmapq}, 
but will blow up only certain
components of the centres of blowing-up involved in the algorithm. The centres of
blowing up given by the algorithm are 
the maximum loci of $\inv$. The maximum locus of $\inv$ includes all maximal values of $\io$. The 
maximum locus of $\inv$ can be written
as a disjoint union $A\cup B$ in the following way: $A$ is the union of those components of the maximum locus containing no stable-snc points, and $B$ is the union of the remaining components. Thus $B$ is
the union of those components of the maximum locus of $\inv$ with generic point stable-snc. Each component of $B$ has Hilbert-Samuel function $H_{\Omega,q}$, for some $(\Omega,q)$, and lies in the corresponding $\Sigma_{\Omega,q}$ by Lemma \ref{lem:centersinsigmapq}.

In each year $j$ of the blowing-up history, write $A=A_j$, $B=B_j$. We will blow up with
centre $A_j$ only. Then $\inv$ decreases in the preimage
of $A_j$. In the following year $j+1$, $B_{j+1}$ may acquire new components in addition
to those of $B_j$, but eventually $A_k = \emptyset$. So we reduce to the case that
$A=\emptyset$.

\begin{lemma}\label{claim:claimAempty}
Suppose $A = \emptyset$. If $(\Omega,q)\in K(X,D)$, then 
$H_{\Supp D, a} = H_{\Omega,q}$, for all $a \in \Sigma_{\Omega,q}$.
\end{lemma}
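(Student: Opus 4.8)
The plan is to argue by contradiction. Fix $(\Omega,q)\in K(X,D)$ and a point $a\in\Sigma_{\Omega,q}$, and suppose $H_{\Supp D,a}\neq H_{\Omega,q}$; I want to derive that $A\neq\emptyset$. By the remark following Lemma \ref{lem:HSlemma} we always have $H_{\Supp D,a}\not< H_{\Omega,q}$, so the value $\io(a)=(\Omega,H_{\Supp D,a})$ satisfies $\io(a)\not\le(\Omega,H_{\Omega,q})$, and moreover $a$ is not stable-snc (by Proposition \ref{lemmafactorssnc}(2), a stable-snc point of $\Sigma_{\Omega,q}$ has $H_{\Supp D}=H_{\Omega,q}$). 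First I would record the two standing inputs I intend to exploit: (i) admissible blowings-up never create a stratum strictly above one already present (the remark after Definition \ref{def:K}), so $(\Omega,q)$ is still maximal among all $(\Omega',q')$ with $\Sigma_{\Omega',q'}\neq\emptyset$ and $r_{\Omega'}\ge 2$; and (ii) since $X^{(m)}$ is always a relevant component, its presence forces the embedding dimension $e$ at the $r\ge 2$ point $a$ to exceed $\dim X^{(m)}$, whereas any point lying in a single relevant component has first entry exactly $\dim X^{(m)}$.

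Next I would use the hypothesis $A=\emptyset$. Then every component of the maximum locus of $\inv$ contains a stable-snc point, so by the discussion preceding Lemma \ref{lem:centersinsigmapq} each such component lies in a single stratum $\Sigma_{\Omega',q'}$ on which $\io\equiv(\Omega',H_{\Omega',q'})$. Consequently every maximal value of $\io$ is of the ``model'' form $(\Omega',H_{\Omega',q'})$ for some stratum. I would then pick a maximal value $\tau$ of $\io$ with $\tau\ge\io(a)$ and write $\tau=(\Omega',H_{\Omega',q'})$. Because $\tau\ge\io(a)$ and the first entry of $\io(a)$ is $e>\dim X^{(m)}$ by (ii), the stratum of $\tau$ cannot have $r_{\Omega'}=1$; hence $r_{\Omega'}\ge 2$ and $(\Omega',q')\in\mathcal{M}(X,D)$.

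The heart of the matter is the comparison of $\tau=(\Omega',H_{\Omega',q'})$ with $\io(a)=(\Omega,H_{\Supp D,a})$. If $\Omega'>\Omega$, then $(\Omega',q')>(\Omega,q)$ in the order of Definition \ref{def:orderingSigmaOmegaQ}, contradicting maximality of $(\Omega,q)$ in $K(X,D)$ via (i); so $\Omega'=\Omega$. In the case $\tau>\io(a)$ I would have $H_{\Omega,q'}>H_{\Supp D,a}$; applying Corollary \ref{cor:HSlemma} at $a\in\Sigma_{\Omega,q}$ with the index $q'$ upgrades this to $H_{\Omega,q'}\ge H_{\Omega,q}$, and since the model functions $H_{\Omega,\cdot}$ form a totally ordered chain and $(\Omega,q)$ is maximal, this forces $H_{\Omega,q'}=H_{\Omega,q}$ — incompatible with $H_{\Omega,q'}>H_{\Supp D,a}\not<H_{\Omega,q}$. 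The only remaining possibility is $\tau=\io(a)$, whence $H_{\Supp D,a}=H_{\Omega,q'}$ is itself a model value; a final application of Corollary \ref{cor:HSlemma} gives $H_{\Omega,q'}\ge H_{\Omega,q}$, and maximality of $(\Omega,q)$ in $K(X,D)$ then yields $H_{\Omega,q'}=H_{\Omega,q}$, i.e.\ $H_{\Supp D,a}=H_{\Omega,q}$, contradicting the standing assumption.

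The step I expect to be the main obstacle is this last poset bookkeeping: converting inequalities between Hilbert--Samuel functions into the stratum order of Definition \ref{def:orderingSigmaOmegaQ} while ruling out \emph{incomparable} values. The two devices that make it go through are Corollary \ref{cor:HSlemma}, which promotes $H_{\Omega,q'}\ge H_{\Supp D,a}$ to a comparison with the stratum of $a$, and the remark after Lemma \ref{lem:HSlemma}, which excludes $H_{\Supp D,a}<H_{\Omega,q}$; together they confine every relevant comparison to the totally ordered chain $\{H_{\Omega,\cdot}\}$, where maximality of $(\Omega,q)\in K(X,D)$ can be invoked directly.
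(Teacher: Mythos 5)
Your proof is correct and follows essentially the same route as the paper's: use $A=\emptyset$ to conclude that every maximal value of $\io$ is a model value $(\Omega',H_{\Omega',q'})$ attained on a component of $B$ lying in $\Sigma_{\Omega',q'}$, dominate $\io(a)$ by such a value, and then combine Corollary \ref{cor:HSlemma} with maximality of $(\Omega,q)$ in $K(X,D)$ to force $H_{\Supp D,a}=H_{\Omega,q}$. The only differences are refinements that, if anything, tighten the paper's write-up: you justify explicitly (via the embedding-dimension observation) that the dominating stratum has $r_{\Omega'}\geq 2$ and hence counts in $\mathcal{M}(X,D)$, and your final case split invokes the remark after Lemma \ref{lem:HSlemma} in place of the ``moreover'' clause of Corollary \ref{cor:HSlemma}, thereby avoiding the paper's implicit appeal to injectivity of $q\mapsto H_{\Omega,q}$.
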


\begin{proof}
Let $a\in\Sigma_{\Omega,q}$, where $(\Omega,q)\in K(X,D)$. Set $H = H_{\Supp D, a}$.
Assume that $H \neq H_{\Omega,q}$. Recall that, for every $b \in B$, 
$H_{\Supp D, b} = H_{\Omega',q'}$ for some $(\Omega', q')$,  and $b \in \Sigma_{\Omega',q'}$. Therefore $a\notin B$, so that $\inv(a)$ is not maximal. Thus there exists $b\in B$ such
that  $\io(b) = (\Omega',H_{\Omega',q'})$ and $(\Omega',H_{\Omega',q'}) \geq (\Omega,H)$, for some $(\Omega',q')$, and $b\in\Sigma_{\Omega',q'}$. If $\Omega' > \Omega$ then 
$(\Omega',q')>(\Omega,q)$; this contradicts $(\Omega,q)\in K(X,D)$. If $\Omega'=\Omega$ then, by Corollary \ref{cor:HSlemma}, $H_{\Omega',q'} \geq H_{\Omega,q}$. 
If $H_{\Omega',q'} > H_{\Omega,q}$, then $(\Omega',q')>(\Omega,q)$, again contradicting
$(\Omega,q)\in K(X,D)$. If $H_{\Omega',q'}= H_{\Omega,q}$, then $H=H_{\Omega,q}$,
by Corollary \ref{cor:HSlemma}, as desired.
\end{proof}

Lemma \ref{claim:claimAempty} finishes the proof of Proposition \ref{prop:FixHSfunction}.
\end{proof}

 \begin{proof}[Proof of Proposition \ref{prop:desingOfJ}]
We first reduce to the case $J=\mathcal{O}_X$, using Lemma \ref{removingJpequals2}.
The proof then has two steps:
\begin{enumerate}
\item We apply Proposition \ref{prop:FixHSfunction} to make $H_{\Supp D, a} = H_{\Omega,q}$, 
for all $a \in \Sigma_{\Om,q}$ and all $(\Omega,q)\in K(X,D)$.
\item We use Lemma \ref{removingJpequals2} to reduce to $J=\mathcal{O}_X$.
\end{enumerate}

The initial reduction to $J=\mathcal{O}_X$ is for the purpose of functoriality: The centres 
of blowing up involved in desingularization of $J$ may include points outside the strata of $K(X,D)$. Therefore, on an open set $U$ outside the strata of $K(X,D)$, the centres of
blowing up from desingularization of $J$ (in Step (2), for example)
may play a role when applying Step (1) for $K(U,D|_U)$ (in the inductive step of Case B in the
proof of Theorem \ref{thm:fortriples}).

After Step (1), $H_{\Supp D, a} = H_{\Omega,q}$, 
for all $a \in \Sigma_{\Om,q}$ and all $(\Omega,q)\in K(X,D)$. Then, by Lemma \ref{lem:HSlemma}, 
at each $a\in\Sigma_{\Omega,q}$, where $(\Omega,q)\in K(X,D)$, we have $I_{D_m}+I_{D^{m-1}}=I_{D^{m-1}}+I_m$, where $I_{D_m}$, $I_{D^{m-1}}$ and $I_m$ are the ideals of $D_m$, $D^{m-1}$ and $X^{(m)}$, respectively. In the notation of Lemma \ref{lem:HSlemma}, $I_{D_m}=(x_{m,1},\ldots,x_{m,c_m},f)$, $I_{D^{m-1}}=\cap_{i=1}^{m-1}(x_{i,1},\ldots,x_{i,c_i})+(y_1\dotsm y_r)$ and $I_m=(x_{m,1},\ldots,x_{m,c_m})$, and the lemma says that $f\in I_{D^{m-1}}$. This property is preserved by blowings-up as involved in Step (2). By Lemma \ref{lem:HSlemma}, we also have $\ord(f)=\ord(\Supp D^{(1)})=q$. This property is preserved by blowings-up with smooth centres in $\Supp D_m$ that are normal crossings to $D^{m-1}$; this is the case for the
blowings-up from desingularization of $J$ (see Section \ref{sec:desingularization of J}). Thus
the properties above are preserved by Step (2). 

We can therefore apply Theorem \ref{thm:char} to conclude that $(X,D,E,Y)$ is stable-snc at every point of $\Sigma_{\Omega,q}$, for $(\Omega,q)\in K(X,D)$.
\end{proof}

\section{The non-reduced case}\label{sec:fixingmultiplicities}
The previous sections establish Theorem \ref{thm:maintriples} in the case that $D$ is reduced. In this section we describe the blowings-up necessary to establish the non-reduced case.
In other words, we assume that $(X,D_{\text{red}},E)$ is stable-snc, and we 
prove Theorem \ref{thm:maintriples} under this assumption. 

The algorithm is a simple modification of that in \cite[Section 8]{BV}, to account for the
fact that the components of $X$ and therefore of $D$ are not necessarily of the same
dimension here. For this reason, we only give the modified algorithm and refer to \cite{BV}
for the proof.

We define an equivalence relation on the components of $D$ at a point of $X$.

\begin{definition}\label{def:equivcompofD}
Let $a\in X$ and let $D_1$, $D_2$ denote components of $D$ at $a$. Assume that, for
each $i=1,2$, $D_i \subset X^{(i)}$, where $X^{(i)}$ is a component of $X$ of codimension $c_i$ 
in a minimal local embedding variety $Z$ of $X$ at $a$. We say that $D_1$ and $D_2$ are \emph{equivalent} (\emph{at} $a$) if either $D_1=D_2$ or the irreducible component of $D_1\cap D_2$ at $a$ has codimension $c_1+c_2+1$ in $Z$.
\end{definition}

Given $a\in X$, let $\kappa_X(a)$ denote the number of components of $X$ at $a$, and let $q(a)$ 
denote the number of equivalence classes present in the set of components of $D$ at $a$. 
Define $\iota:X\rightarrow\mathbb{N}^2$ by $\iota(a):=(\kappa_X(a),q(a))$. We give $\mathbb{N}^2$ the partial order where $(\kappa_1,q_1)\geq(\kappa_2,q_2)$ means that 
$\kappa_1\geq \kappa_2$ and $q_1\geq q_2$. Then
$\iota$ is upper semi-continuous. Therefore, the maximal locus of $\iota$ is a closed set.

Each irreducible component $Q$
of the maximal locus of $\iota$ consists of only stable-snc points or only non-stable-snc points, because all points of $Q$ belong to the same irreducible components of
$D$. We blow up with center $C =$ the union of the components of the maximal locus of
$\iota$ that contain only non-stable-snc points. In the preimage of $C$, $\iota$ decreases. 

Let $W$ be the union of the components of the maximal locus consisting of stable-snc points. The 
blowing-up above is an isomorphism on $W$, so $(X',D')$ is stable-snc on $W' = W$, and 
therefore in a neighbourhood of $W'$. For this reason, the union of the components of the maximal locus of $\iota$ on $X'\setminus W'$ that contain only
non-stable-snc points, is closed in $X'$. Therefore, we can repeat the procedure on $X'\setminus W'$.

Clearly, $\mathbb{N}^2$ has no infinite decreasing sequences with respect to the
order above. After the blowing-up above, the maximal values of $\iota$ on the non-stable-snc locus
of $(X,D)$ decrease. Therefore, after a finite number of iterations of the
procedure above, the non-stable-snc locus becomes empty.

\begin{remark}
Suppose that $(X,D_{\text{red}})$ is stable-snc. Then the blowing-up sequence in this section is given simply by the
desingularization algorithm for
$\Supp D$, but blowing up only those components of the maximal locus of the invariant on the non-stable-snc locus. 
\end{remark}

\bibliographystyle{alpha}

\begin{thebibliography}{99}

\bibitem{BDV}
E. Bierstone, S. Da Silva. P. D. Milman and F. Vera Pacheco,
\textit{Desingularization by blowings-up avoiding simple
normal crossings}, Proc. Amer. Math. Soc., to appear, 13 pages.

\bibitem{BMjams}
E. Bierstone and P.D. Milman,
\textit{Uniformization of analytic spaces},
J. Amer. Math. Soc.
\textbf{2} (1989), 801--836.

\bibitem{BMinv}
E. Bierstone and P.D. Milman,
\textit{Canonical desingularization in characteristic zero by
blowing up the maximum strata of a local invariant},
Invent. Math.
\textbf{128} (1997), 207--302.

\bibitem{BMDesAlg}
E. Bierstone and P.D. Milman,
\textit{Desingularization algorithms I. The role of exceptional divisors},
Moscow Mat. Journal.
\textbf{3} (2002) No. 3, 751--805.

\bibitem{BMfunct}
E. Bierstone and P.D. Milman,
\textit{Functoriality in resolution of singularities},
Publ. R.I.M.S. Kyoto Univ. \textbf{44} (2008), 609--639.

\bibitem{BMmin}
E. Bierstone and P.D. Milman,
\textit{Resolution except for minimal singularities I},
Adv. in Math., \textbf{231} (2012), 3022--3053.

\bibitem{BV}
E. Bierstone and F. Vera Pacheco,
\textit{Resolution of singularities of pairs preserving semi-simple normal crossings},
Rev. R. Acad. Cienc. Exactas F\'is. Nat. Ser. A. Mat. RACSAM, \textbf{107}
(2013), 159--188.

\bibitem{Kolog}
J. Koll\'ar,
\textit{Semi log resolutions},
preprint arXiv:0812.3592v1 [math.AG] (2008), 10 pages.

\bibitem{Ma}
H. Matsumura,
\textit{Commutative algebra},
Benjamin, New York, 1980.

\bibitem{Sz}
E. Szab\'o,
\textit{Divisorial log terminal singularities},
J. Math. Sci. Univ. Tokyo \textbf{1} (1994), 631--639.

\end{thebibliography}

\end{document}